\documentclass[12pt]{article}
\usepackage{amsfonts}
\usepackage{amssymb}
\usepackage{mathrsfs}
\usepackage{srcltx}
\textwidth 155mm \textheight 225mm \oddsidemargin 15pt
\evensidemargin 0pt \topmargin 0cm \headsep 0.3cm

\usepackage{amsmath}
\usepackage{amsthm}
\usepackage{amstext}
\usepackage{amsopn}
\usepackage{graphicx}
\usepackage{bm}
\usepackage{color}
\newtheorem{theorem}{Theorem}[section]
\newtheorem{lemma}[theorem]{Lemma}
\newtheorem{proposition}[theorem]{Proposition}

\theoremstyle{definition}

\theoremstyle{remark}
\newtheorem{remark}[theorem]{Remark}

\numberwithin{equation}{section}

\newcommand{\ba}{\begin{array}}
\newcommand{\ea}{\end{array}}
\newcommand{\f}{\frac}

\newcommand{\la}{\lambda}

\newcommand{\ds}{\displaystyle}

\begin{document}
\date{}
\title{ \bf\large{Stability and bifurcation in a reaction-diffusion-advection predator-prey model}\thanks{This research is supported by Shandong Provincial Natural Science Foundation of China (No. ZR2020YQ01) and the National Natural Science Foundation of China (No. 12171117).}}
\author{Yihuan Sun\textsuperscript{1},\ \ Shanshan Chen\textsuperscript{2}\footnote{Corresponding Author, Email: chenss@hit.edu.cn}\ \
 \\
{\small \textsuperscript{1} School of Mathematics, Harbin Institute of Technology,\hfill{\ }}\\
\ \ {\small  Harbin, Heilongjiang, 150001, P.R.China.\hfill{\ }}\\
{\small \textsuperscript{2} Department of Mathematics, Harbin Institute of Technology,\hfill{\ }}\\
\ \ {\small Weihai, Shandong, 264209, P.R.China.\hfill{\ }}\\
}
\maketitle

\begin{abstract}
A reaction-diffusion-advection predator-prey model with Holling type-II predator
functional response is considered.
We show the stability/instability of the positive steady state and the existence of a Hopf bifurcation when the diffusion and advection rates are large.
Moreover, we show that advection rate can affect not only  the occurrence of Hopf bifurcations but also the values of Hopf bifurcations.

\noindent {\bf{Keywords}}: Reaction-diffusion-advection; Spatial heterogeneity; Hopf bifurcation; Stability.\\
\noindent {\bf {MSC 2010}}: 37G15, 35K57, 92D25
\end{abstract}

\section{Introduction}

The influence of environmental heterogeneity on population dynamics has been studied extensively. For example,
environmental heterogeneity can increase the total population size for a single species \cite{LouY2006}.
For two competing species in heterogeneous environments, if they are identical except dispersal
rates, then the slower diffuser wins \cite{Dockery1998}, whereas they can coexist in homogeneous environments.
The global dynamics for the weak competition case was investigated in \cite{LAMNi2012SIMA,LouY2006}, and it was completely classified in \cite{HeNi2016}. The heterogeneity of environments can also induce complex patterns for predator-prey interaction models, see \cite{DuYi2004,DuShi2007,LiWu2018} and references therein.

 In heterogeneous environments, the population may have a tendency to move up or down along the gradient of the environments, which is referred to as a \lq\lq advection'' term \cite{BF1995}. That is, the random diffusion term $d\Delta u$ is replaced by
 \begin{equation}\label{advectionop}
 d\Delta u-\alpha \nabla  \cdot \left( u\nabla m(x) \right),
 \end{equation}
 where $d$ is the diffusion rate, $\alpha$ is the advection rate, and $m(x)$ represents the heterogeneity of environment. The effect of advection as \eqref{advectionop} on population dynamics has been studied extensively for single and two competing species, see, e.g., \cite{Averill2017,BF1995,CantrellCL2006,Cantrell2007,ChenLou2008,CosnerLou2003,LAM2011,LamLou2014,LouS2015,ZhouXiao2018}. There also exists another kind of advection for species in streams, and the random diffusion term $du_{xx}$ is now replaced by
\begin{equation}\label{advectionop2}
d u_{xx}-\alpha u_x,
 \end{equation}
where $\alpha u_x$ represents the unidirectional flow from the upstream end to the downstream end. It is shown that, if the two competing species in streams are identical except dispersal rates, then the faster diffuser wins \cite{LouLu2014,LouZhaoZhou2019,LouZhou2015,Zhou2016}.
In \cite{LouNi2022,NieWang2020,TangChen2022,WangNie2022,XinLi2022}, the authors showed the effect of advection as \eqref{advectionop2} on the persistence of the predator and prey. One can also refer to \cite{HuangJin,jin2011seasonal,jin2019population,lam2016emergence,lutscher2006effects,lutscher2007spatial,lutscher2005effect,JinLewis,TangChenSIAM2021} and references therein for population dynamics in streams.

As is well known, periodic solutions occur commonly for predator-prey models \cite{May1972}, and Hopf bifurcation is a mechanism to induce these periodic solutions. For diffusive predator-prey models in homogeneous environments, Hopf bifurcations can be investigated following the framework of \cite{Hassard1981,Yi2009}, see also \cite{DongLi2017,ShiRuan2015,WangJ2011,WangMX2008} and references therein.
A natural question is how advection affects Hopf bifurcations for predator-prey models in heterogeneous environments.

In this paper, we aim to give an initial exploration for this question,
and investigate the effect of advection as \eqref{advectionop} on Hopf bifurcations for the following predator-prey model:
\begin{equation}\label{m1}
\begin{cases}
\ds u_t= \nabla  \cdot \left[d_1 \nabla u -\alpha_1 u \nabla m \right] + u\left(m(x) - u \right)- \frac{uv}{1+u},&x \in \Omega,\;t > 0,\\
\ds v_t = d_2 \Delta v -rv + \frac{luv}{1+u}, &x \in  \Omega,\;t > 0,\\
d_1 \partial_n u -\alpha_1 u \partial_n m =0,\;\;\partial_n v = 0, & x \in \partial \Omega,\;t > 0,\\
u(0,x) = {u_0}(x) \ge 0,v(0,x) = {v_0}(x) \ge 0,&x \in \Omega.
\end{cases}
\end{equation}
Here $\Omega$ is a bounded domain in $\mathbb R^N\;(1\le N\le 3)$ with a smooth boundary $\partial \Omega$; $n$ is the outward unit normal vector on $\partial \Omega$, and no-flux boundary conditions are imposed; $u(x,t)$ and $v(x,t)$ denote the population densities of the prey and predator at location $x$ and time $t$, respectively; $d_1,d_2>0$ are the diffusion rates; $\alpha_1\ge0$ is the advection rate; $l>0$ is the conversion rate; $r>0$ is the death rate of the predator; and the function $u/(1 + u)$ denotes the Holling type-II functional response
of the predator to the prey density.
The function $m(x)$ represents the intrinsic growth rate of the prey, which depends on the spatial environment.

Throughout the paper, we impose the following assumption:
\begin{enumerate}
  \item[$({\bf H}_1)$] $m(x)\in C^2 (\overline \Omega)$, $m(x)\ge(\not\equiv)0$ in $\overline \Omega$, and $m(x)$ is non-constant.
  \item[$({\bf H}_2)$] $\ds\frac{d_2}{d_1}=\theta>0$ and $\ds\frac{\alpha_1}{d_1}=\alpha\geq0$.
\end{enumerate}
Here $({\bf H}_2)$ is a mathematically technical condition, and it means that the dispersal and advection rates of the prey and predator are proportional.
Then letting $\tilde u=e^{-\alpha m(x)} u,\tilde t=d_1 t$, denoting $\la= 1/ d_1$, and dropping the tilde sign, model \eqref{m1} can be transformed to the following model:
\begin{equation}\label{m4}
\begin{cases}
\ds u_t=  e^{-\alpha m(x)} \nabla  \cdot \left[e^{\alpha m(x)}\nabla u \right] + \la u\left(m(x) - e^{\alpha m(x)} u - \frac{v}{1+e^{\alpha m(x)} u}\right),&x \in \Omega,\;t > 0,\\
\ds v_t =  \theta \Delta v + \la v\left(-r + \frac{l e^{\alpha m(x)} u}{1+e^{\alpha m(x)} u}\right),&x \in \Omega,\;t > 0,\\
\partial_n u = \partial_n v = 0, & x \in \partial \Omega,\;t>0,\\
u(0,x) = {u_0}(x) \ge 0,v(0,x) = {v_0}(x) \ge 0,&x \in \Omega,
\end{cases}
\end{equation}
where $\theta$ and $\alpha$ are defined in assumption $({\bf H}_2)$.

We remark that model \eqref{m1} with $\alpha_1=0$ (or respectively, model \eqref{m4} with $\alpha=0$) was investigated in \cite{Lou2017,Wangzhang2017}, and they showed that the heterogeneity of the environment can influence the local dynamics, and multiple positive steady states can bifurcate from the semi-trivial steady state by
 using $d_1,d_2$ (or respectively, $\la$) as the bifurcation parameters. In this paper, we consider model \eqref{m4} for the case that $\alpha\ne0$ and $0<\la\ll 1$.
We show that when $0<\la\ll1$, the local dynamics of model \eqref{m4} is similar to the following \lq\lq weighted'' ODEs:
\begin{equation}\label{wODEs}
\begin{cases}
\ds  u_t\int_\Omega e^{\alpha m(x)}dx=u\left(\int_{\Omega}e^{\alpha m(x)}m(x)dx-u\int_{\Omega}e^{2\alpha m(x)}dx\right)-v \int_{\Omega}\frac{e^{\alpha m(x)} u}{1+e^{\alpha m(x)} u}dx,\\
\ds v_t =-r v+\frac{l v}{|\Omega|}\int_{\Omega}\frac{ e^{\alpha m(x)} u}{1+e^{\alpha m(x)} u}dx.\\
\end{cases}
\end{equation}
A direct computation implies that model \eqref{wODEs} admits a unique positive equilibrium $(c_{0l},q_{0l})$ if and only if $l>\tilde l$, where $(c_{0l},q_{0l})$ and $\tilde l$ are defined in Lemma \ref{l1}. From the proof of Lemma \ref{l5}, one can obtain the
local dynamics model \eqref{wODEs} as follows:
\begin{enumerate}
\item [$\rm{(i)}$] If $\mathcal T(\alpha)<0$, then the positive equilibrium $(c_{0l},q_{0l})$ of model \eqref{wODEs} is stable for $l>\tilde l$;
\item [$\rm{(ii)}$] If  $\mathcal T(\alpha)>0$, then there exists $l_0>\tilde l$ such that $(c_{0l},q_{0l})$ is stable for $\tilde l<l<l_0$ and unstable for $l>l_0$, and model \eqref{wODEs} undergoes a Hopf bifurcation when $l=l_0$.
\end{enumerate}
Here $\mathcal T(\alpha)$ and $l_0$ are defined in Lemma \ref{suppst}.
Similarly,
model \eqref{m4} admits a unique positive steady state $\left(u^{(\la,l)},v^{(\la,l)}\right)$ for $(l,\la)\in[\tilde l+\epsilon,1/\epsilon]\times (0,\delta_\epsilon]$ with $0<\epsilon\ll 1$, where $\delta_\epsilon$  depends on $\epsilon$ (Theorem \ref{thglobal}), and admits similar local dynamics as model \eqref{wODEs} when $l\in[\tilde l+\epsilon,1/\epsilon]$ and $0<\la\ll1$ (Theorem \ref{ed}), see also Fig. \ref{fig11}.
Moreover, we show that the sign of $\mathcal T(\alpha)$ is key
to guarantee the existence of a Hopf bifurcation curve for model \eqref{m4}. We obtain that if $\int_{\Omega}(m(x)-1)dx<0$ and $\{x\in\Omega:m(x)>1\}\ne\emptyset$, then there exists $\alpha_*>0$ such that $\mathcal T(\alpha_*)=0$, $\mathcal T(\alpha)<0$ for $0\le\alpha<\alpha_*$, and $\mathcal T(\alpha)>0$ for $\alpha>\alpha_*$ (Theorem \ref{lHopf}). Therefore, the advection rate affects the occurrence of Hopf bifurcations (Proposition \ref{effad}). Moreover, we find that the advection rate can also  affect the values of Hopf bifurcations (Proposition \ref{last}).
\begin{figure}[htbp]
\centering
\includegraphics[width=0.75\textwidth]{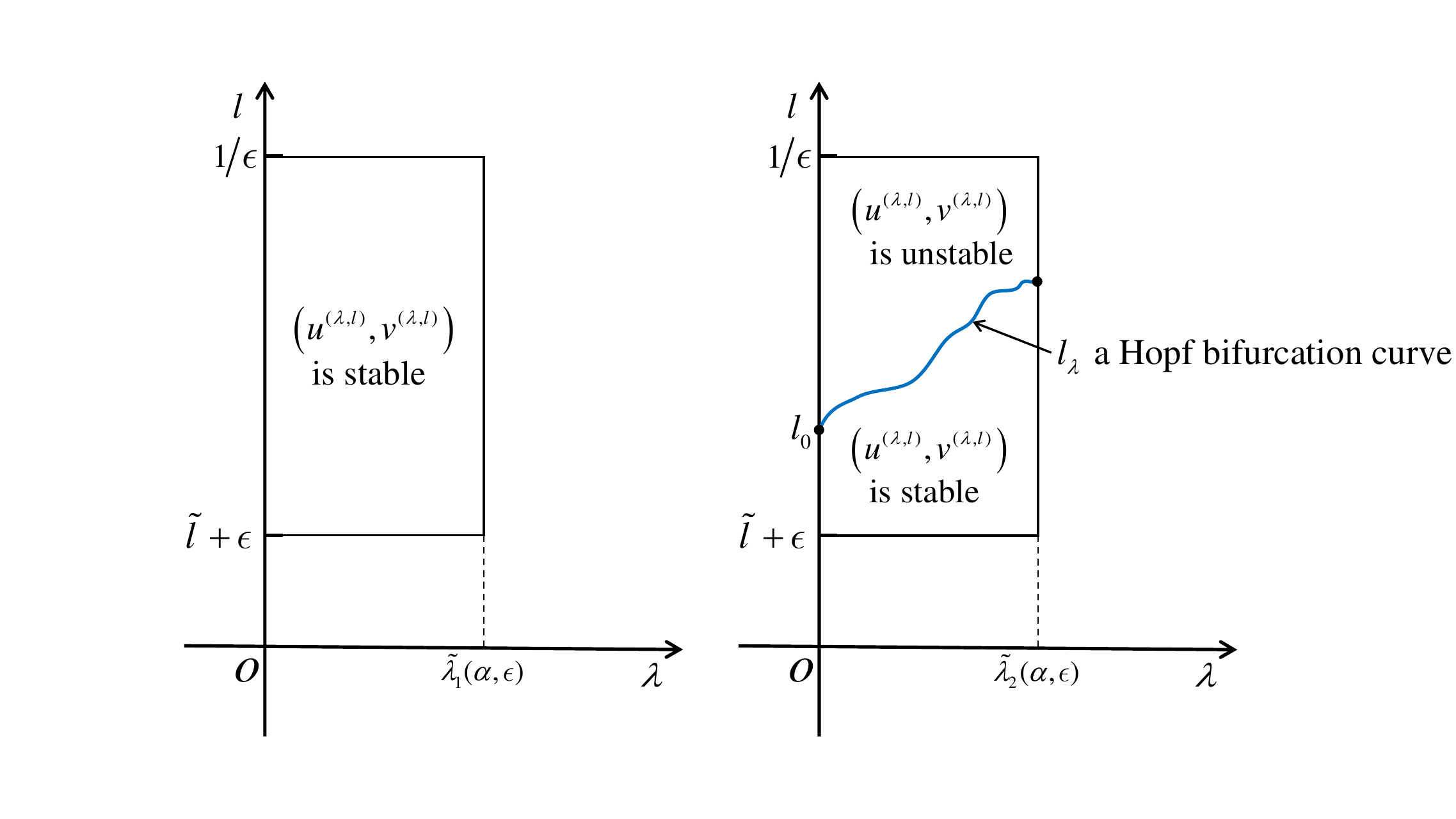}
\caption{Local dynamics of model \eqref{m4} for $(l,\la)\in[\tilde l+\epsilon,1/\epsilon]\times (0,\tilde \la_j(\alpha,\epsilon))$ with $0<\epsilon\ll1$. Here $\tilde\la_j(\alpha,\epsilon)$ means that $\tilde\la_j$ depends on $\alpha$ and $\epsilon$ for $j=1,2$. (Left): $\mathcal T(\alpha)<0$; (Right): $\mathcal T(\alpha)>0$. \label{fig11}}
\end{figure}

For simplicity, we list some notations for later use. We denote
\begin{equation*}
X=\left\{ \left.u\in H^2(\Omega ) \right|  \partial_n u = 0 \right\}\;\;\text{and} \;\; Y=L^2(\Omega).
\end{equation*}
Denote the complexification of a real linear space $Z$ by
$$ Z_{\mathbb C}:= Z\oplus{\rm i} Z=\{x_1 + {\rm i}x_2 |x_1, x_2 \in  Z\},$$
the kernel and range of a linear operator $T$ by $\mathcal N (T)$ and $\mathcal R (T)$, respectively.
For $ Y_{\mathbb C}$, we choose the standard inner product $\langle  u, v  \rangle=\int_\Omega{\overline u(x)v(x)}dx,$
and  the norm is defined by $\|  u\|_2={\langle  u, u  \rangle}^{\frac{1}{2}}.$

The rest of the paper is organized as follows. In Section 2, we show the existence and uniqueness
of the positive steady state for a range of parameters, see the rectangular region in Fig. \ref{fig11}.
In Section 3, we obtain the local dynamics of model \eqref{m4} when $(l,\la)$ is in the above rectangular region.
In Section 4, we show the effect of advection on Hopf bifurcations.

\section{Positive steady states}
In this section, we consider the positive steady states of model \eqref{m4}, which
satisfy the following system:
\begin{equation}\label{equi1}
\begin{cases}
\ds-\nabla  \cdot \left[e^{\alpha m(x)}\nabla u \right] =  \la e^{\alpha m(x)} u\left(m(x) - e^{\alpha m(x)} u - \frac{v}{1+e^{\alpha m(x)} u}\right),&x \in \Omega,\\
\ds-\theta\Delta v =  \la v\left(-r + \frac{l e^{\alpha m(x)} u}{1+e^{\alpha m(x)} u}\right),&x \in \Omega, \\
\partial_n u = \partial_n v = 0, & x \in \partial \Omega.
\end{cases}
\end{equation}
Denote
\begin{equation}\label{L}
L:=\nabla  \cdot \left[e^{\alpha m(x)}\nabla \right],
\end{equation}
and we have the following decompositions:
\begin{equation}\label{decom}
\begin{split}
&X=\mathcal N (\Delta)\oplus  X_1=\mathcal N (L)\oplus X_1,\\
&Y=\mathcal N (\Delta)\oplus Y_1=\mathcal N (L)\oplus Y_1,
\end{split}
\end{equation}
where
\begin{equation}\label{X1Y1}
\begin{split}
&X_1=\left\{ y\in X:\int_\Omega{y(x)}dx=0\right\},\\
&Y_1=\mathcal R(\Delta)=\mathcal R(L)=\left\{ y\in Y:\int_\Omega{y(x)}dx=0\right\}.
\end{split}
\end{equation}
Let
\begin{equation}\label{uv}
\begin{split}
&u= c+ \xi,\;\;\text{where}\;\;c=\ds\f{1}{|\Omega|}\int_{\Omega}udx\in\mathbb R,\;\xi\in X_1,\\
&v=q+ \eta,\;\;\text{where}\;\;q=\ds\f{1}{|\Omega|}\int_{\Omega}vdx\in\mathbb R,\;\eta\in X_1.\\
\end{split}
\end{equation}
Then substituting \eqref{uv} into \eqref{equi1}, we see that $(u,v)$ (defined in \eqref{uv}) is a solution of \eqref{equi1} if and only if $(c,q,\xi,\eta )\in \mathbb R^2\times X_1^2$ solves
\begin{equation}\label{F}
 F(c,q,\xi,\eta,l,\la)=(f_1,f_2,f_3,f_4)^T= 0,
\end{equation}
where
$F(c,q,\xi,\eta,l,\la):\mathbb R^2\times X_1^2\times \mathbb R^2\to \left(\mathbb R\times Y_1\right)^2$,
and
\begin{equation}\label{fi}
\begin{cases}
f_1(c,q,\xi,\eta,l,\la):=&\ds \int_\Omega  { e^{\alpha m(x)}( c+ \xi)\left(m(x) - e^{\alpha m(x)}( c+ \xi)- \frac{q+ \eta}{1+e^{\alpha m(x)}( c+ \xi)}\right) } dx,\\
f_2(c,q,\xi,\eta,l,\la):=&L \xi-\ds\frac{\la}{{\left| \Omega  \right|}} f_1\\
&+\ds\la e^{\alpha m(x)}( c+ \xi)\left(m(x) - e^{\alpha m(x)}( c+ \xi)- \frac{q+ \eta}{1+e^{\alpha m(x)}( c+ \xi)}\right),\\
f_3(c,q,\xi,\eta,l,\la):=&\ds \int_\Omega  { (q+ \eta)\left(-r + \frac{l e^{\alpha m(x)} ( c+ \xi)}{1+e^{\alpha m(x)}( c+ \xi)}\right)}dx ,\\
f_4(c,q,\xi,\eta,l,\la):=&\ds\theta\Delta \eta+\la(q+ \eta)\left(-r + \frac{l e^{\alpha m(x)} ( c+ \xi)}{1+e^{\alpha m(x)} ( c+ \xi)}\right)-\frac{\la}{{\left| \Omega  \right|}} f_3.\\
\end{cases}
\end{equation}

We first solve $F(c,q,\xi,\eta,l,\la)=0$ for $\la=0$.

\begin{lemma}\label{l1}
Suppose that $\la=0$, and let
\begin{equation}\label{l_*}
\tilde c=\ds\f{\int_\Omega e^{\alpha m(x)}m(x) dx}{\int_\Omega e^{2\alpha m(x)}dx}\;\;\text{and}\;\; \tilde l=\ds\frac{r |\Omega|}{\int_\Omega \frac{ \tilde c e^{\alpha m(x)} }{1+\tilde c e^{\alpha m(x)} }dx}.
\end{equation}
Then, for any $l>0$, $ F(c,q,\xi,\eta,l,\la)= 0$ has three solutions: $(0,0,0,0)$, $(\tilde c,0,0,0)$ and $(c_{0l},q_{0l},0,0)$, where $(c_{0l},q_{0l})$ satisfies
\begin{equation}\label{c0l}
\begin{split}
&\int_\Omega  \frac{c_{0l}e^{\alpha m(x)}}{1+ c_{0l} e^{\alpha m(x)} }dx=\frac{r}{l} |\Omega|,\\
&q _{0l}=\frac{l c_{0l}}{r |\Omega|} \int_\Omega e^{\alpha m(x)}  \left(m(x)- c_{0l} e^{\alpha m(x)} \right) dx.
\end{split}
\end{equation}
Moreover,  $c_{0l},q_{0l}>0$ if and only if $l>\tilde l$.
\end{lemma}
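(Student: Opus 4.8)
The plan is to put $\la=0$ in \eqref{fi}, use the kernel structure recorded in \eqref{decom} to eliminate the two infinite-dimensional unknowns $\xi,\eta$, then carry out an elementary case analysis on the two remaining scalar equations, and finally read the sign conditions off the monotonicity of a single integral. With $\la=0$ every term of \eqref{fi} carrying a factor $\la$ disappears, so the third and fourth components of $F$ become $f_2=L\xi$ and $f_4=\theta\Delta\eta$. Testing $L\xi=0$ against $\xi$ and integrating by parts (using $\partial_n\xi=0$ and $e^{\alpha m(x)}>0$) forces $\nabla\xi\equiv0$, and likewise $\nabla\eta\equiv0$; since $\xi,\eta\in X_1$ have zero mean, $\xi=\eta=0$ — equivalently $\mathcal N(L)\cap X_1=\mathcal N(\Delta)\cap X_1=\{0\}$ by \eqref{decom}. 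Hence every zero of $F$ with $\la=0$ has the form $(c,q,0,0)$, and \eqref{F} collapses to
\begin{equation*}
c\int_\Omega e^{\alpha m(x)}\Bigl(m(x)-c\,e^{\alpha m(x)}-\frac{q}{1+c\,e^{\alpha m(x)}}\Bigr)dx=0,\qquad
q\int_\Omega\Bigl(-r+\frac{l\,c\,e^{\alpha m(x)}}{1+c\,e^{\alpha m(x)}}\Bigr)dx=0.
\end{equation*}

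I would then split on whether $q=0$. If $q=0$, the first equation reads $c\bigl(\int_\Omega e^{\alpha m(x)}m(x)\,dx-c\int_\Omega e^{2\alpha m(x)}dx\bigr)=0$, so $c=0$ or $c=\tilde c$, giving the solutions $(0,0,0,0)$ and $(\tilde c,0,0,0)$. If $q\neq0$, the second equation forces $\int_\Omega\frac{c\,e^{\alpha m(x)}}{1+c\,e^{\alpha m(x)}}dx=\frac{r}{l}|\Omega|$; the map $g(c):=\int_\Omega\frac{c\,e^{\alpha m(x)}}{1+c\,e^{\alpha m(x)}}dx$ satisfies $g(0)=0$, $g'(c)=\int_\Omega\frac{e^{\alpha m(x)}}{(1+c\,e^{\alpha m(x)})^2}dx>0$ and $g(c)\to|\Omega|$ as $c\to\infty$, so this equation has a unique positive root $c=c_{0l}$ — the first line of \eqref{c0l} — at least in the relevant range $l>r$ (which includes $l>\tilde l$, since $\tilde l>r$). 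Substituting $c=c_{0l}$ into the first scalar equation and using $\int_\Omega\frac{e^{\alpha m(x)}}{1+c_{0l}e^{\alpha m(x)}}dx=\frac{r}{l\,c_{0l}}|\Omega|$ turns it into a linear equation for $q$ whose unique solution is $q_{0l}$ as in \eqref{c0l}, giving $(c_{0l},q_{0l},0,0)$. Conversely, a direct substitution — using \eqref{l_*} for $(\tilde c,0,0,0)$ and \eqref{c0l} for $(c_{0l},q_{0l},0,0)$ — checks that all three points indeed solve $F(c,q,\xi,\eta,l,0)=0$.

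For the final assertion, strict monotonicity of $g$ gives $c_{0l}<\tilde c\iff\frac{r}{l}|\Omega|=g(c_{0l})<g(\tilde c)=\int_\Omega\frac{\tilde c\,e^{\alpha m(x)}}{1+\tilde c\,e^{\alpha m(x)}}dx=\frac{r}{\tilde l}|\Omega|\iff l>\tilde l$, while the formula for $q_{0l}$ in \eqref{c0l} together with the definition of $\tilde c$ in \eqref{l_*} gives $q_{0l}>0\iff\int_\Omega e^{\alpha m(x)}\bigl(m(x)-c_{0l}e^{\alpha m(x)}\bigr)dx>0\iff c_{0l}<\tilde c$; since $c_{0l}>0$ whenever it is defined, we conclude $c_{0l},q_{0l}>0\iff l>\tilde l$. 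I do not expect a serious obstacle: the lemma is in essence a bookkeeping computation once $\xi=\eta=0$ has been forced, and the only point needing a little care is the existence, uniqueness and sign of $c_{0l}$, all of which rest on the strict monotonicity of the single map $g(c)=\int_\Omega\frac{c\,e^{\alpha m(x)}}{1+c\,e^{\alpha m(x)}}dx$ together with the definitions of $\tilde c$ and $\tilde l$.
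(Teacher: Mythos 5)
Your proposal is correct and follows essentially the same route as the paper: set $\la=0$ to force $\xi=\eta=0$, reduce \eqref{F} to the two scalar equations \eqref{solcq}, and use the strict monotonicity of $c\mapsto\int_\Omega\frac{c\,e^{\alpha m(x)}}{1+c\,e^{\alpha m(x)}}dx$ (which the paper phrases as $dc_{0l}/dl<0$ with $\lim_{l\to\infty}c_{0l}=0$) to get $c_{0l},q_{0l}>0\iff 0<c_{0l}<\tilde c\iff l>\tilde l$. You merely spell out details the paper leaves implicit, such as the integration-by-parts argument for $\xi=\eta=0$ and the existence range for $c_{0l}$.
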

\begin{proof}
Substituting $\la =0$ into $f_2=0$ and $f_4=0$, respectively, we have $ \xi= 0$ and $ \eta= 0$. Then
substituting $\xi=\eta=0$ into $f_1=0$ and $f_3=0$, respectively, we have
\begin{equation}\label{solcq}
\begin{split}
 &c \int_\Omega  { e^{\alpha m(x)} \left(m(x) - c e^{\alpha m(x)}- \frac{q}{1+ce^{\alpha m(x)} }\right) } dx=0,\\
 &\int_\Omega  { q\left(-r + \frac{l ce^{\alpha m(x)} }{1+ce^{\alpha m(x)} }\right)}dx=0.
 \end{split}
\end{equation}
Therefore, \eqref{solcq} has three solutions:
$(0,0)$, $(\tilde c,0)$, $(c_{0l},q_{0l})$,
where $\tilde c$ is defined in \eqref{l_*}, and $(c_{0l},q_{0l})$ satisfies
\begin{equation}\label{solcq2}
\begin{split}
 &\int_\Omega  { e^{\alpha m(x)} \left(m(x) - c_{0l} e^{\alpha m(x)}- \frac{q_{0l}}{1+c_{0l}e^{\alpha m(x)} }\right) } dx=0,\\
 &\int_\Omega  {\left(-r + \frac{l c_{0l}e^{\alpha m(x)} }{1+c_{0l}e^{\alpha m(x)} }\right)}dx=0.
 \end{split}
\end{equation}
A direct computation implies that $c_{0l}$ and $q_{0l}$ satisfy \eqref{c0l}. By the second equation of \eqref{c0l}, we see that
$c_{0l},q_{0l}>0$ if and only if $0<c_{0l}<\tilde c$. It follows from the first equation of \eqref{c0l} that
\begin{equation*}
\ds\f{dc_{0l}}{dl}<0 \;\;\text{and}\;\;\lim_{l\to\infty} c_{0l}=0.
\end{equation*}
Then we obtain that  $0<c_{0l}<\tilde c$ if and only if $l>\tilde l$, where $\tilde l$ is defined in \eqref{l_*}. This completes the proof.
\end{proof}

We remark that $\tilde l$ is the critical value for the successful invasion of the predator for model \eqref{m4} with $0<\la\ll1$ (or respectively, \eqref{wODEs}). In the following we will consider the monotonicity of $\tilde l$ with respect to $\alpha$ and show the effect of advection rate on the invasion of the predator.
\begin{proposition}\label{cl}
Let $\tilde l(\alpha)$ be defined in \eqref{l_*}. Then
\begin{equation}\label{estil}
\ds \tilde l(\alpha)\geq\f{r(  |\Omega|+V(\alpha) )} {V(\alpha)} \;\;\text{for all}\;\; \alpha>0,
\end{equation}
where
\begin{equation}\label{Valpha}
\ds V(\alpha):=\f{\int_\Omega e^{\alpha m(x)}m(x) dx \int_\Omega e^{\alpha m(x)}dx } {\int_\Omega e^{2 \alpha m(x)} dx }.
\end{equation}
Moreover, the following statements hold:
\begin{enumerate}
\item [${\rm (i)}$] $\tilde l'(\alpha)|_{\alpha=0} < 0;$
\item [${\rm (ii)}$] If $\ds\lim_{\alpha\to\infty} V(\alpha)=0$, then $\ds\lim_{\alpha\to+\infty} \tilde l(\alpha)=\infty$. Especially, if $\Omega=(0,1)$ and $m'(x)>0$ (or respectively, $m'(x)<0$) for all $x\in [0,1]$, then $\ds\lim_{\alpha\to+\infty} V(\alpha)=0$.
\end{enumerate}

\end{proposition}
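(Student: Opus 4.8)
The plan is to dispatch the three assertions in the order \eqref{estil}, then (ii), then (i), because the inequality immediately yields the first half of (ii), while the derivative computations needed for (i) stand on their own.

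\textbf{The inequality \eqref{estil}.} I would rewrite $\tilde l(\alpha)=r|\Omega|/h(\alpha)$, where $h(\alpha):=\int_\Omega\phi\big(\tilde c(\alpha)e^{\alpha m(x)}\big)\,dx$ and $\phi(t):=t/(1+t)$, which is increasing and strictly concave on $[0,\infty)$ since $\phi''(t)=-2(1+t)^{-3}<0$. Applying Jensen's inequality to $\phi$ with the probability measure $dx/|\Omega|$ and the nonnegative function $g(x):=\tilde c(\alpha)e^{\alpha m(x)}$, together with the identity $\frac1{|\Omega|}\int_\Omega g\,dx=V(\alpha)/|\Omega|$ (which is precisely the definition of $\tilde c$ in \eqref{l_*} inserted into \eqref{Valpha}), gives $h(\alpha)\le |\Omega|V(\alpha)/(|\Omega|+V(\alpha))$; inverting this is \eqref{estil}. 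I would also note that equality holds at $\alpha=0$, since there $g\equiv\tilde c(0)$ is constant — this is exactly what makes (i) nontrivial, as the gap in \eqref{estil} is higher order near $\alpha=0$.

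\textbf{Part (ii).} The first claim drops out of \eqref{estil}: $\tilde l(\alpha)\ge r+r|\Omega|/V(\alpha)\to\infty$ once $V(\alpha)\to0$. For the second claim I would take $\Omega=(0,1)$ and $m'>0$ on $[0,1]$ (the case $m'<0$ reduces to this by the substitution $x\mapsto1-x$, under which all three integrals defining $V$ are unchanged). Setting $\delta:=\min_{[0,1]}m'>0$, $\Delta:=\max_{[0,1]}m'$, $M:=m(1)$, $\mu:=m(0)$, with $M>\mu$ by $({\bf H}_1)$, the elementary pointwise bounds $\alpha\delta\,e^{\alpha m}\le\frac{d}{dx}e^{\alpha m}$ and $\frac{d}{dx}e^{2\alpha m}\le 2\alpha\Delta\,e^{2\alpha m}$, integrated over $[0,1]$, give $\int_0^1 e^{\alpha m}\,dx\le(\alpha\delta)^{-1}e^{\alpha M}$ and, using $e^{2\alpha\mu}\le\tfrac12 e^{2\alpha M}$ for $\alpha$ large, $\int_0^1 e^{2\alpha m}\,dx\ge(4\alpha\Delta)^{-1}e^{2\alpha M}$. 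Since $0\le e^{\alpha m}m\le Me^{\alpha m}$, combining these gives $V(\alpha)\le M\,\big(\int_0^1 e^{\alpha m}\big)^2\big/\int_0^1 e^{2\alpha m}=O(1/\alpha)\to0$, as required.

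\textbf{Part (i).} Since $\tilde l=r|\Omega|/h$ with $h(0)=|\Omega|\bar m/(1+\bar m)>0$, where $\bar m:=\frac1{|\Omega|}\int_\Omega m\,dx>0$, it is enough to show $h'(0)>0$. Because $m\in C^2(\overline\Omega)$ is bounded, $\tilde c$ and $h$ are differentiable near $0$ and one may differentiate under the integral sign; using $\frac{d}{d\alpha}\phi(g)=\phi'(g)g_\alpha$ with $g_\alpha=(\tilde c'+\tilde c\,m)e^{\alpha m}$ and evaluating at $\alpha=0$, where $g\equiv\bar m$, I obtain $h'(0)=\frac{|\Omega|}{(1+\bar m)^2}\big(\tilde c'(0)+\bar m^2\big)$. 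Differentiating $\tilde c=\big(\int_\Omega e^{\alpha m}m\big)\big/\big(\int_\Omega e^{2\alpha m}\big)$ at $\alpha=0$ yields $\tilde c'(0)=\frac1{|\Omega|}\int_\Omega m^2\,dx-2\bar m^2$, so $\tilde c'(0)+\bar m^2=\frac1{|\Omega|}\int_\Omega(m-\bar m)^2\,dx>0$ because $m$ is non-constant by $({\bf H}_1)$. Hence $h'(0)>0$, i.e.\ $\tilde l'(0)<0$.

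\textbf{Main obstacle.} The only step requiring genuine care is the Laplace-type estimate in (ii); I deliberately avoid a full asymptotic expansion and use only the differential-inequality bounds above, which suffice since merely the limit is asserted. Everything else is a single application of Jensen's inequality plus the two first-order Taylor expansions of $\tilde c$ and $h$ at $\alpha=0$.
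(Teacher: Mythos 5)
Your proposal is correct and follows essentially the same route as the paper: Jensen's inequality applied to $t\mapsto t/(1+t)$ with mean $V(\alpha)/|\Omega|$ gives \eqref{estil}, part (i) comes from differentiating $h(\alpha)=\int_\Omega \tilde c e^{\alpha m}/(1+\tilde c e^{\alpha m})\,dx$ at $\alpha=0$ and recognizing the variance $\int_\Omega (m-\bar m)^2\,dx>0$ (the paper's $\mathcal F$ is exactly your $h$), and part (ii) is \eqref{estil} plus the same $O(1/\alpha)$ Laplace-type bounds on $\int_0^1 e^{\alpha m}\,dx$ and $\int_0^1 e^{2\alpha m}\,dx$. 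The only cosmetic difference is that you obtain these bounds by integrating pointwise differential inequalities (and reduce $m'<0$ to $m'>0$ by $x\mapsto 1-x$), whereas the paper uses the change of variables $dm=m'\,dx$; both yield the identical conclusion.
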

\begin{proof}
Since function $\ds \f{x}{1+x}$ is concave, it follows from the Jensen's inequality that
$$
\f{1}{|\Omega|}\int_\Omega \f{ \tilde c(\alpha) e^{\alpha m(x)} }{ 1+ \tilde c(\alpha) e^{\alpha m(x)}}  dx\leq\f{\f{1}{|\Omega|}\int_\Omega \tilde c(\alpha) e^{\alpha m(x)} dx}   {1+\f{1}{|\Omega|}\int_\Omega \tilde c (\alpha) e^{\alpha m(x)} dx},
$$
where $\tilde c(\alpha)$ is defined in \eqref{l_*}.
This combined with \eqref{l_*} implies that
\eqref{estil} holds.

(i) We define
$$\mathcal F(\alpha):=\int_\Omega \frac{ \tilde c(\alpha) e^{\alpha m(x)} }{1+\tilde c(\alpha) e^{\alpha m(x)} }dx= |\Omega|- \int_\Omega \frac{1 }{1+\tilde c(\alpha) e^{\alpha m(x)} }dx,$$
 and consequently, $\ds\tilde l(\alpha)=\frac{r |\Omega|}{\mathcal F(\alpha)}$.
A direct computation yields
$$\mathcal F'(0)=\f{ |\Omega|}{ \left( |\Omega|+\int_\Omega m(x) dx  \right)^2}\left[ |\Omega| \int_\Omega m^2(x) dx-  \left(\int_\Omega m(x) dx \right)^2  \right].$$
Since $m(x)$ is non-constant, it follows from the H\"{o}lder inequality that
$\mathcal F'(0)>0$, and consequently $\tilde l'(0)<0$.

(ii) By \eqref{estil}, we see that $\ds\lim_{\alpha\to+\infty} \tilde l(\alpha)=\infty$ if $\ds\lim_{\alpha\to\infty} V(\alpha)=0$. Next, we give a sufficient condition for $\ds\lim_{\alpha\to\infty} V(\alpha)=0$.
We only consider the case that $m'(x)>0$ for all $x\in \Omega$, and the other case can be proved similarly.
Since
$$\int_{0}^{1}e^{\alpha m(x)}dx=\int_{0}^{1}e^{\alpha m(x)}m'(x)\f{1}{m'(x)}dx=\int_{0}^{1}e^{\alpha m(x)}\f{1}{m'(x)}dm(x),$$
which implies that
$$\f{e^{\alpha m(1)}-e^{\alpha m(0)}}{\alpha \ds\max_{x\in[0,1]}  m'(x)}\leq\int_{0}^{1}e^{\alpha m(x)}dx\leq \f{e^{\alpha m(1)}-e^{\alpha m(0)}}{\alpha \ds\min_{x\in[0,1]}  m'(x)}.$$
Therefore,
$$ V(\alpha)\le \frac{2\|m(x)\|_\infty ( \ds\max_{x\in[0,1]} m'(x))\left(e^{\alpha m(1)}-e^{\alpha m(0)}\right)^2}{\alpha( \ds\min_{x\in[0,1]} m'(x))^2\left(e^{2\alpha m(1)}-e^{2\alpha m(0)}\right)},$$
which yields $\ds\lim_{\alpha\to+\infty} V(\alpha)=0$.
\end{proof}
It follows from Proposition \ref{cl} that $\tilde l(\alpha)$ is strictly monotone decreasing when $\alpha$ is small, and it may change its monotonicity at least once under certain condition. We conjecture that $\tilde l(\alpha)$ changes its monotonicity even for general function $m(x)$ and all $\la>0$.

Now we solve \eqref{F} for $\la>0$ by virtue of the implicit function theorem.
\begin{lemma}\label{ll2}
For any
$ l_*>\tilde l$, where $\tilde l$ is defined in \eqref{l_*}, there exists $\tilde \delta_{l_*}>0$, a neighborhood $\mathcal O_{l_*}$ of
$(c_{0l_*},q_{0l_*},0,0)$ in $\mathbb R^2\times X_1^2$, and a continuously differentiable mapping
$$(\la,l)\mapsto\left(c^{(\la,l)},q^{(\la,l)},\xi^{(\la,l)},\eta^{(\la,l)}\right):
[0,\tilde \delta_{l_*}]\times [l_*-\tilde \delta_{l_*},l_*+\tilde \delta_{l_*}]\to \mathbb R^2\times X_1^2$$ such that
$\left(c^{(\la,l)},q^{(\la,l)},\xi^{(\la,l)},\eta^{(\la,l)}\right)\in \mathbb R^2\times X_1^2$ is a unique solution of
\eqref{F} in $\mathcal O_{l_*}$ for $(\la,l)\in [0,\tilde \delta_{l_*}]\times [l_*-\tilde \delta_{l_*},l_*+\tilde \delta_{l_*}]$. Moreover,
\begin{equation*}
\left(c^{(\la,l)},q^{(\la,l)},\xi^{(\la,l)},\eta^{(\la,l)}\right)=(c_{0l_*},q_{0 l_*},0,0) \;\;\text{for}\;\; (\la,l)=(0,l_*).
\end{equation*}
Here $c_{0l}$ and $q_{0l}$ are defined in Lemma \ref{l1}.
\end{lemma}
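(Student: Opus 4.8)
The plan is to apply the implicit function theorem with $(c,q,\xi,\eta)$ as the unknowns and $(\la,l)$ as parameters, taking as base point the solution $(c_{0l_*},q_{0l_*},0,0)$ at $\la=0$ furnished by Lemma \ref{l1}. First I would check that $F$ from \eqref{F}--\eqref{fi} is a $C^1$ (indeed smooth) map on a neighborhood of $(c_{0l_*},q_{0l_*},0,0,l_*,0)$ in $\mathbb R^2\times X_1^2\times\mathbb R^2$ into $(\mathbb R\times Y_1)^2$. Since $N\le3$ the embedding $H^2(\Omega)\hookrightarrow C(\overline\Omega)$ is bounded, so $c+\xi,\ q+\eta\in C(\overline\Omega)$; because $l_*>\tilde l$ gives $c_{0l_*}>0$ (Lemma \ref{l1}), the denominator $1+e^{\alpha m(x)}(c+\xi)$ stays bounded away from $0$ on a small $C(\overline\Omega)$-ball around $c_{0l_*}$, so the Nemytskii operators built from the smooth nonlinearities, together with pointwise multiplication and integration over $\Omega$, are smooth. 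Moreover $f_2,f_4$ land in $Y_1$: $\int_\Omega L\xi\,dx=\int_{\partial\Omega}e^{\alpha m}\partial_n\xi\,dS=0$, $\int_\Omega\theta\Delta\eta\,dx=0$, and the remaining $\la$-terms of $f_2$ (resp.\ $f_4$) were arranged, by subtracting $\frac{\la}{|\Omega|}f_1$ (resp.\ $\frac{\la}{|\Omega|}f_3$), to have zero mean. By Lemma \ref{l1}, $F(c_{0l_*},q_{0l_*},0,0,l_*,0)=0$.

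The heart of the argument is to show that $T:=D_{(c,q,\xi,\eta)}F(c_{0l_*},q_{0l_*},0,0,l_*,0)\colon\mathbb R^2\times X_1^2\to(\mathbb R\times Y_1)^2$ is an isomorphism. At $\la=0$ one has $f_2=L\xi$ and $f_4=\theta\Delta\eta$, so the two $Y_1$-components of $T(\delta c,\delta q,\delta\xi,\delta\eta)$ are precisely $L\,\delta\xi$ and $\theta\Delta\,\delta\eta$ (with no dependence on $\delta c,\delta q$), while the two $\mathbb R$-components are the differentials of the scalars $f_1$ and $f_3$ in all four variables, evaluated at $\xi=\eta=0$. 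By the decompositions recalled in \eqref{decom}--\eqref{X1Y1}, $\mathcal N(L)=\mathcal N(\Delta)$ equals the constants and $\mathcal R(L)=\mathcal R(\Delta)=Y_1$, hence $L,\theta\Delta\colon X_1\to Y_1$ are isomorphisms. Therefore $T$ is invertible provided the $2\times2$ matrix $M:=\partial_{(c,q)}(f_1,f_3)\big|_{(c_{0l_*},q_{0l_*},0,0)}$ is nonsingular: given data $(a,g_2,b,g_4)$, one first solves $L\,\delta\xi=g_2$ and $\theta\Delta\,\delta\eta=g_4$ uniquely in $X_1$, and then $M(\delta c,\delta q)^{T}=\big(a-\partial_\xi f_1[\delta\xi]-\partial_\eta f_1[\delta\eta],\ b-\partial_\xi f_3[\delta\xi]-\partial_\eta f_3[\delta\eta]\big)^{T}$ has a unique solution.

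To see that $M$ is nonsingular I would evaluate its entries at $(c_{0l_*},q_{0l_*},0,0)$. Differentiating and using the identities \eqref{c0l}, \eqref{solcq2}, one gets $\partial_q f_1=-\int_\Omega\frac{c_{0l_*}e^{\alpha m}}{1+c_{0l_*}e^{\alpha m}}\,dx=-\frac{r|\Omega|}{l_*}<0$ (by the first identity of \eqref{c0l}), $\partial_q f_3=\int_\Omega\big(-r+\frac{l_*c_{0l_*}e^{\alpha m}}{1+c_{0l_*}e^{\alpha m}}\big)dx=0$ (by the second identity of \eqref{solcq2}), and $\partial_c f_3=l_*q_{0l_*}\int_\Omega\frac{e^{\alpha m}}{(1+c_{0l_*}e^{\alpha m})^2}\,dx>0$ because $q_{0l_*}>0$. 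Hence $\det M=\partial_c f_1\cdot\partial_q f_3-\partial_q f_1\cdot\partial_c f_3=-\partial_q f_1\cdot\partial_c f_3=r|\Omega|\,q_{0l_*}\int_\Omega\frac{e^{\alpha m}}{(1+c_{0l_*}e^{\alpha m})^2}\,dx>0$, so $M$, and therefore $T$, is an isomorphism. The implicit function theorem then produces $\tilde\delta_{l_*}>0$, a neighborhood $\mathcal O_{l_*}$ of $(c_{0l_*},q_{0l_*},0,0)$, and the asserted continuously differentiable map with its local uniqueness property; the identity at $(\la,l)=(0,l_*)$ is immediate from the construction.

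The step I expect to need the most care is the nonsingularity of $M$: it hinges entirely on $q_{0l_*}>0$, i.e.\ on the hypothesis $l_*>\tilde l$ — were $q_{0l_*}=0$, the whole second row of $M$ would vanish and $T$ would fail to be surjective — so Lemma \ref{l1} enters essentially here. A secondary technical point is the smoothness/range bookkeeping for $F$: one must keep $1+e^{\alpha m}(c+\xi)$ positive (the reason for working near $c_{0l_*}>0$) and verify the mean-zero property of $f_2,f_4$, both routine given $N\le3$ and the construction in \eqref{fi}.
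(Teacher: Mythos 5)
Your proposal is correct and follows essentially the same route as the paper: apply the implicit function theorem at $(c_{0l_*},q_{0l_*},0,0,l_*,0)$, note that at $\la=0$ the $Y_1$-components of the derivative reduce to the isomorphisms $L,\theta\Delta\colon X_1\to Y_1$, and reduce invertibility to the same $2\times2$ matrix, whose determinant $r|\Omega|\,q_{0l_*}\int_\Omega \frac{e^{\alpha m(x)}}{(1+c_{0l_*}e^{\alpha m(x)})^2}\,dx>0$ is exactly the paper's computation. Your explicit solve-the-triangular-system argument for surjectivity is in fact slightly more detailed than the paper's brief ``injective and thus bijective'' remark.
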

\begin{proof}
It follows from Lemma \ref{l1} that
$$ F(c_{0l_*},q_{0l_*}, 0, 0,l_*,0)= 0,$$
where $ F$ is defined in \eqref{F}. Then
the Fr\'echet derivative of $ F$ with respect to $(c,q,\xi,\eta)$ at $(c_{0l_*},q_{0l_*}, 0, 0,l_*,0)$ is as follows:
\begin{equation*}
 G(\hat c,\hat q,\hat \xi,\hat \eta)=(g_1,g_2,g_3,g_4)^T,
\end{equation*}
where $\hat c,\hat q\in\mathbb R$, $\hat \xi,\hat \eta \in X_1$, and
\begin{equation*}
\begin{cases}
 g_1(\hat c,\hat q,\hat \xi,\hat \eta):=&\ds\int_\Omega  e^{\alpha m(x)} \left(m(x)-2 c_{0l_*} e^{\alpha m(x)}  - \frac{ q_{0l_*}}{(1+c_{0l_*} e^{\alpha m(x)} )^2}\right)  (\hat c+ \hat \xi) dx\\
 & \ds - \int_\Omega\frac{c_{0l_*} e^{\alpha m(x)} }{1+ c_{0l_*} e^{\alpha m(x)} }(\hat q+ \hat \eta)  dx,\\
 g_2(\hat c,\hat q,\hat \xi,\hat \eta):=&L\hat \xi ,\\
 g_3(\hat c,\hat q,\hat \xi,\hat \eta):=& \ds \int_\Omega \frac{ l_* q_{0l_*} e^{\alpha m(x)} }{(1+c_{0l_*} e^{\alpha m(x)} )^2}\left(\hat c+ \hat \xi\right) dx\\
 &+\ds\int_\Omega\left(-r+\ds\f{l_*c_{0l_*}e^{\alpha m(x)}}{1+ c_{0l_*} e^{\alpha m(x)}}\right)(\hat q+\hat \eta)dx,\\
 g_4(\hat c,\hat q,\hat \xi,\hat \eta):=&\theta \Delta\hat \eta.\\
\end{cases}
\end{equation*}

If $ G(\hat c,\hat q,\hat \xi,\hat \eta)= 0$, then $\tilde\xi= 0$ and $ \hat \eta= 0$.
Substituting $\hat \xi=\hat \eta= 0$ into $g_1=0$ and $g_3=0$, respectively, we have
\begin{equation*}
(P_{ij})(\hat c,\hat q)^T=(0,0)^T,
\end{equation*}
where
\begin{equation*}
\begin{split}
P_{11}=&\ds\int_\Omega  { e^{\alpha m(x)} m(x) } dx -2 c_{0l_*} \int_\Omega e^{2\alpha m(x)} dx  -q_{0l_*} \int_\Omega \frac{  e^{\alpha m(x)}  }{(1+ c_{0l_*} e^{\alpha m(x)} )^2} dx,\\
P_{12}=&-\int_\Omega \frac{  c_{0l_*} e^{\alpha m(x)}   }{1+ c_{0l_*} e^{\alpha m(x)} } dx=-\ds\frac{r  |\Omega|}{l_*},\\
P_{21}=&\ds \int_\Omega \frac{ l_* q_{0l_*} e^{\alpha m(x)}  }{(1+ c_{0l_*} e^{\alpha m(x)} )^2}dx,\\
P_{22}=&\ds\int_\Omega\left(-r+\ds\f{l_*c_{0l_*}e^{\alpha m(x)}}{1+ c_{0l_*} e^{\alpha m(x)}}\right)dx=0.
\end{split}
\end{equation*}
Noticing that
\begin{equation*}
\text{det} (P_{ij})
= r  |\Omega| \int_\Omega \frac{ q_{0l_*} e^{\alpha m(x)}  }{(1+  c_{0l_*} e^{\alpha m(x)})^2}dx \ne 0,
\end{equation*}
we obtain that $\hat c=0$ and $\hat q=0$. Therefore, $ G$ is injective and thus bijective. Then, we can complete the proof by the implicit function theorem.
\end{proof}

By virtue of Lemma \ref{ll2}, we have the following result.
\begin{theorem}\label{thlocal}
Assume that $l_*>\tilde l$, where $\tilde l$ is defined in \eqref{l_*}.
Let \begin{equation}\label{uvf}
u^{(\la,l)}=c^{(\la,l)}+\xi^{(\la,l)},\;\;v^{(\la,l)}=q^{(\la,l)}+\eta^{(\la,l)}\;\;\text{for}\;\;(\la,l)\in [0,\delta_{l_*}]\times [l_*-\delta_{l_*},l_*+\delta_{l_*}],
\end{equation}
where $0<\delta_{l_*}\ll1$, and
$\left(c^{(\la,l)},q^{(\la,l)},\xi^{(\la,l)},\eta^{(\la,l)}\right)$ is obtained in Lemma \ref{ll2}.
Then $\left(u^{(\la,l)},v^{(\la,l)}\right)$ is the unique positive solution  of
 \eqref{equi1} for $(\la,l)\in (0,\delta_{l_*}]\times [l_*-\delta_{l_*},l_*+\delta_{l_*}]$.
Moreover,
\begin{equation}\label{limcruv}
\left(c^{(0,l)},q^{(0,l)},\xi^{(0,l)},\eta^{(0,l)}\right)=(c_{0l},q_{0l}, 0, 0)\;\;\text{for}\;\;
l\in[l_*-\delta_{l_*},l_*+\delta_{l_*}],
\end{equation}
where $c_{0l}$ and $q_{0l}$ are defined in Lemma \ref{l1}.
\end{theorem}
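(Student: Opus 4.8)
The plan is to extract everything from Lemma~\ref{ll2} via the one-to-one correspondence between \eqref{equi1} and \eqref{F}, and then to upgrade the branch produced by the implicit function theorem to a genuine \emph{positive} steady state by a continuity-and-compactness argument. First I would recall that, by the splitting \eqref{uv}, a pair $(u,v)$ solves \eqref{equi1} if and only if the quadruple $(c,q,\xi,\eta)$ with $c=\frac{1}{|\Omega|}\int_\Omega u\,dx$, $\xi=u-c$, $q=\frac{1}{|\Omega|}\int_\Omega v\,dx$, $\eta=v-q$ solves $F(c,q,\xi,\eta,l,\la)=0$; this is precisely how \eqref{fi} was set up. Hence Lemma~\ref{ll2} already supplies, for $(\la,l)$ in the box $[0,\tilde\delta_{l_*}]\times[l_*-\tilde\delta_{l_*},l_*+\tilde\delta_{l_*}]$, a continuously differentiable branch $\bigl(c^{(\la,l)},q^{(\la,l)},\xi^{(\la,l)},\eta^{(\la,l)}\bigr)$ that is the unique zero of $F$ in the neighbourhood $\mathcal O_{l_*}$ and equals $(c_{0l_*},q_{0l_*},0,0)$ at $(\la,l)=(0,l_*)$. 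Setting $u^{(\la,l)}=c^{(\la,l)}+\xi^{(\la,l)}$ and $v^{(\la,l)}=q^{(\la,l)}+\eta^{(\la,l)}$ then gives, through the above correspondence, the unique solution of \eqref{equi1} whose associated quadruple lies in $\mathcal O_{l_*}$, which is the uniqueness claimed.

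Next I would establish the identity \eqref{limcruv}. By Lemma~\ref{l1}, for $\la=0$ the equation $F(\cdot,\cdot,\cdot,\cdot,l,0)=0$ has exactly the three solutions $(0,0,0,0)$, $(\tilde c,0,0,0)$ and $(c_{0l},q_{0l},0,0)$. Since $l_*>\tilde l$ and $c_{0l},q_{0l}$ depend continuously on $l$ (first equation of \eqref{c0l} and the second equation of \eqref{c0l} together with $dc_{0l}/dl<0$), after shrinking $\tilde\delta_{l_*}$ I may assume $l>\tilde l$, hence $c_{0l},q_{0l}>0$, for all $l$ in the interval, and that the curve $l\mapsto(c_{0l},q_{0l},0,0)$ stays inside $\mathcal O_{l_*}$; shrinking $\mathcal O_{l_*}$ if necessary, the two ``boundary'' solutions $(0,0,0,0)$ and $(\tilde c,0,0,0)$ are excluded from $\mathcal O_{l_*}$ because $q_{0l_*}>0$ separates them from $(c_{0l_*},q_{0l_*},0,0)$. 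Uniqueness in $\mathcal O_{l_*}$ then forces $\bigl(c^{(0,l)},q^{(0,l)},\xi^{(0,l)},\eta^{(0,l)}\bigr)=(c_{0l},q_{0l},0,0)$, i.e.\ \eqref{limcruv}.

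Finally, for positivity when $\la>0$, I would use that the map $(\la,l)\mapsto\bigl(c^{(\la,l)},q^{(\la,l)},\xi^{(\la,l)},\eta^{(\la,l)}\bigr)$ is continuous into $\mathbb R^2\times X_1^2$ and, by \eqref{limcruv}, coincides with $(c_{0l},q_{0l},0,0)$ on the compact segment $\{0\}\times[l_*-\tilde\delta_{l_*},l_*+\tilde\delta_{l_*}]$. Combining the embedding $X_1\subset H^2(\Omega)\hookrightarrow C(\overline{\Omega})$ (this is where $1\le N\le 3$ enters) with compactness of the $l$-interval, I obtain $\|\xi^{(\la,l)}\|_\infty+\|\eta^{(\la,l)}\|_\infty\to 0$ and $c^{(\la,l)}\to c_{0l}$, $q^{(\la,l)}\to q_{0l}$ \emph{uniformly} in $l$ as $\la\to 0^+$. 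Since $\min_{l}\min\{c_{0l},q_{0l}\}>0$ on the (shrunk) interval, one can pick $0<\delta_{l_*}\le\tilde\delta_{l_*}$ so small that $c^{(\la,l)}>\|\xi^{(\la,l)}\|_\infty$ and $q^{(\la,l)}>\|\eta^{(\la,l)}\|_\infty$ for all $(\la,l)\in(0,\delta_{l_*}]\times[l_*-\delta_{l_*},l_*+\delta_{l_*}]$; then $u^{(\la,l)}=c^{(\la,l)}+\xi^{(\la,l)}>0$ and $v^{(\la,l)}=q^{(\la,l)}+\eta^{(\la,l)}>0$ pointwise on $\overline{\Omega}$, so $\bigl(u^{(\la,l)},v^{(\la,l)}\bigr)$ is a positive solution of \eqref{equi1}, with uniqueness already noted.

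The step I expect to be the main obstacle is making the $l$-dependence uniform enough that a single $\delta_{l_*}$ serves simultaneously for the positivity bound, for keeping the $\la=0$ branch $(c_{0l},q_{0l},0,0)$ inside $\mathcal O_{l_*}$, and for excluding the trivial and semi-trivial quadruples; this is exactly where continuity of the implicit-function branch, compactness of $[l_*-\tilde\delta_{l_*},l_*+\tilde\delta_{l_*}]$, and the Sobolev embedding into $C(\overline{\Omega})$ must be used together. Everything else is routine bookkeeping around Lemmas~\ref{l1} and \ref{ll2}.
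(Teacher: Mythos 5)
Your proposal proves existence, positivity, and the identification \eqref{limcruv} correctly (the argument for \eqref{limcruv} via the explicit classification of zeros of $F$ at $\la=0$ in Lemma \ref{l1} plus local uniqueness in $\mathcal O_{l_*}$ is fine, and the positivity argument via uniform convergence of the branch and the embedding $X\hookrightarrow C(\overline\Omega)$ for $N\le 3$ is essentially what the paper does). But there is a genuine gap at the central claim: you assert that the implicit function theorem yields ``the unique solution of \eqref{equi1} whose associated quadruple lies in $\mathcal O_{l_*}$, which is the uniqueness claimed.'' It is not. The theorem claims that $\left(u^{(\la,l)},v^{(\la,l)}\right)$ is the unique \emph{positive} solution of \eqref{equi1} for $(\la,l)\in(0,\delta_{l_*}]\times[l_*-\delta_{l_*},l_*+\delta_{l_*}]$, with no restriction that its decomposition lie in the neighbourhood $\mathcal O_{l_*}$ produced by the implicit function theorem. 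Nothing in Lemma \ref{ll2} rules out, a priori, a second positive solution whose quadruple $(c,q,\xi,\eta)$ is far from $(c_{0l_*},q_{0l_*},0,0)$, and your proposal never addresses this possibility; ``uniqueness already noted'' at the end of your third paragraph refers only to the local statement.

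Closing this gap is the bulk of the paper's proof, and it cannot be done by continuity of the branch alone: one argues by contradiction with a sequence $(\la_k,l_k)\to(0,l_*)$ and positive solutions $(u_k,v_k)\ne\left(u^{(\la_k,l_k)},v^{(\la_k,l_k)}\right)$, and then needs a priori estimates valid for \emph{arbitrary} positive solutions — the maximum principle bound $0\le u_k\le\max_{\overline\Omega}m$, the integral bound on $\int_\Omega v_k\,dx$ obtained by integrating the system, and the Harnack-type estimate of \cite[Lemma 2.1]{peng2009} bounding $\|v_k\|_2$ by $\inf v_k$ — to conclude $\xi_k,\eta_k\to0$ in $X_1$ and, up to a subsequence, $(c_k,q_k)\to(c^*,q^*)$ solving \eqref{solcq} at $l=l_*$. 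One must then exclude the limits $(0,0)$ and $(\tilde c,0)$: the first via the integral identity $\int_\Omega v_k\bigl(-r+\tfrac{l_k e^{\alpha m}u_k}{1+e^{\alpha m}u_k}\bigr)dx=0$, the second via the inequality \eqref{inu} and the hypothesis $l_*>\tilde l$. Only after this does every alternative positive solution land in $\mathcal O_{l_*}$ for large $k$, so that the local uniqueness of Lemma \ref{ll2} gives the contradiction. None of these steps (nor any substitute for them) appears in your proposal, so as written it establishes only local uniqueness, not the theorem.
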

\begin{proof}
It follows from Lemma \ref{ll2} that when $\delta_{l_*}<\tilde\delta_{l_*}$,
$\left(u^{(\la,l)},v^{(\la,l)}\right)$ is a solution of
 \eqref{equi1} for $(\la,l)\in (0,\delta_{l_*}]\times [l_*-\delta_{l_*},l_*+\delta_{l_*}]$, and
 \begin{equation*}
\lim_{(\la,l)\to(0,l_*)}\left(u^{(\la,l)},v^{(\la,l)}\right)=\left(c_{0l_*},q_{0l_*}\right)\;\; \text{in}\;\;X^2.
\end{equation*}
Note from Lemma \ref{l1} that $c_{0l_*},q_{0l_*}>0$ if $l_*>\tilde l$. Then $\left(u^{(\la,l)},v^{(\la,l)}\right)$ is a positive solution of \eqref{equi1} for  $(\la,l)\in (0,\delta_{l_*}]\times [l_*-\delta_{l_*},l_*+\delta_{l_*}]$ with $0<\delta_{l_*}\ll1$.

Next, we show that $\left(u^{(\la,l)},v^{(\la,l)}\right)$ is the unique positive solution of \eqref{equi1} for  $(\la,l)\in (0,\delta_{l_*}]\times [l_*-\delta_{l_*},l_*+\delta_{l_*}]$ with $0<\delta_{l_*}\ll1$.
If it is not true, then there exists a sequence $\{(\la_k,l_k)\}_{k=1}^\infty$ such that
\begin{equation}\label{lasc}
0<\la_k\ll1,\;\; |l_k-l_*|\ll 1,\;\; \lim_{k\to\infty}(\la_k,l_k)=(0,l_*),
\end{equation}
and \eqref{equi1} admits a positive
solution $(u_k,v_k)$ for $(\la,l)=(\la_k,l_k)$ with
\begin{equation}\label{usa}
(u_k,v_k)\ne \left(u^{(\la_k,l_k)},v^{(\la_k,l_k)}\right).
\end{equation}
It follows from \eqref{decom} that $(u_k,v_k)$ can also be decomposed as follows:
\begin{equation*}
  u_k= c_k+ \xi_k,\;\; v_k= q_k+ \eta_k,\;\;\text{where}\;\;c_k,q_k\in\mathbb{R},\; \xi_k,\eta_k\in X_1.
\end{equation*}
Plugging $(u,v,\la,l)=(u_k, v_k,\la_k,l_k)$ into \eqref{equi1}, we see that
\begin{equation}\label{fisupp}
f_i\left(c_k,q_k,\xi_k,\eta_k,l_k,\la_k\right)=0\;\;\text{for}\;\;i=1,2,3,4,
\end{equation}
where $f_i\;(i=1,2,3,4)$ are defined in \eqref{fi}.
It follows from \eqref{equi1} that
\begin{equation}\label{lsupp}
\begin{split}
&- Lu_k  \leq \la_k e^{\alpha m(x)}  u_k\left(  m(x)  -  u_k\right),\\
&l_k \int_\Omega e^{\alpha m(x)}  u_k\left( m(x) - e^{\alpha m(x)}  u_k \right) dx=r \int_\Omega  v_k dx,
\end{split}
\end{equation}
where we have used the divergence formula to obtain the second equation.
From the maximum principle and the first equation of \eqref{lsupp}, we have
\begin{equation}\label{maxu}
0\leq u_k\leq \max_{x\in\overline\Omega} m(x) \;\;\text{for}\;\;k\ge1.
\end{equation}
This, together with the second equation of \eqref{lsupp}, implies that
$$
\int_\Omega v_k dx \leq \mathcal P_1:=\frac{\max_{k\ge1}l_k}{r}\max_{x\in\overline\Omega} m(x) \int_\Omega e^{\alpha m(x)} m(x) dx\;\;\text{for}\;\;k\ge1.
$$
Here we remark that $\max_{k\ge1}l_k<\infty$ from \eqref{lasc}.
Consequently,
\begin{equation}\label{M2}
\inf_{x\in\overline\Omega}  v_k\leq \mathcal P_2:=\frac{\mathcal P_1}{| \Omega | }\;\;\text{for}\;\;k\ge1.
\end{equation}
Note from \eqref{equi1} and \eqref{lasc} that
$$
-\theta\Delta   v_k+  r   v_k\ge-\theta\Delta   v_k+\la_k r v_k\geq 0.
$$
where we have used $0<\la_k\ll1$ (see \eqref{lasc}) for the first inequality. Note that $\Omega$ is a bounded domain in $\mathbb R^N$ with $1\le N\le 3$.
Then we see from \cite[Lemma 2.1]{peng2009} that there exists a positive constant $C_0$, depending only on $r$ and $\Omega$, such that
\begin{equation}\label{maxv}
\left\| v_k \right\|_2 \leq  C_0 \inf_{x\in\overline\Omega} v_k=C_0\mathcal P_2\;\;\text{for}\;\;k\ge1,
\end{equation}
where we have used \eqref{M2} in the last step.

Note from \eqref{maxu} and \eqref{maxv} that $ \{u_k\}_{k=1}^\infty $ is bounded in $L^{\infty} (\Omega)$, and $ \{v_k\}_{k=1}^\infty$ is bounded in $L^2 (\Omega)$. Since $\lim_{k\to\infty}\la_k=0$, we see from \eqref{fisupp} with $i=2,4$ that
\begin{equation*}
\lim_{k \to \infty} L\xi_k=0\;\;\text{and}\;\;\lim_{k \to \infty} \Delta \eta_k=0 \;\;\text{in}\;\;Y_1,
\end{equation*}
which implies that
\begin{equation}\label{limxieta}
\lim_{k \to \infty} \xi_k=0\;\;\text{and}\;\;\lim_{k \to \infty}  \eta_k=0 \;\;\text{in}\;\;X_1.
\end{equation}
Here $X_1$ and $Y_1$ are defined in \eqref{X1Y1}.
By \eqref{uv}, \eqref{maxu} and \eqref{maxv}, we see that
$$c_k=\frac{1}{| \Omega | } \int_\Omega u_k dx\;\; \text{and}\;\;  q_k=\frac{1}{| \Omega | } \int_\Omega  v_k dx,$$
and $\{c_k\}_{k=1}^\infty$,  $\{q_k\}_{k=1}^\infty$ are bounded.
Then, up to a subsequence, we see that
\begin{equation*}
\lim_{k \to \infty}  c_k=  c^*,\;\;\lim_{k \to \infty}  q_k=  q^*.
 \end{equation*}
Taking the limits of \eqref{fisupp} with $i=1,3$  on both sides as $k\to\infty$, respectively,
we obtain that $(c^*,q^*)$ satisfies \eqref{solcq} with $l=l_*$.

We first claim that
\begin{equation*}
(c^*,q^*)\neq(0,0).
\end{equation*}
Suppose that it is not true. Then by \eqref{limxieta} and the embedding theorems, we see that, up to a subsequence,
\begin{equation*}
\lim_{k\to\infty}(u_k,v_k)=(0,0) \;\;\text{in}\;\;C^{\gamma}(\overline\Omega)\;\;\text{for some}\;\;0<\gamma<1.
\end{equation*}
This yields, for sufficiently large $k$,
$$ \int_\Omega  v_k\left(-r+\frac{l_k e^{\alpha m(x)}  u_k}{1+  e^{\alpha m(x)} u_k}  \right)dx<0.$$
Substituting $(u,v,\la,l)=(u_k,v_k,\la_k,l_k)$ into \eqref{equi1}, and integrating the result over $\Omega$, we obtain that
$$ \int_\Omega  v_k\left(-r+\frac{l_k e^{\alpha m(x)}  u_k}{1+  e^{\alpha m(x)} u_k}  \right)dx=0,$$
which is a contradiction. Next, we show that
\begin{equation*}
(c^*,q^*)\neq(\tilde c,0).
\end{equation*}
By way of contradiction, $(c^*,q^*)=(\tilde c,0)$. Similarly, by \eqref{limxieta} and the embedding theorems,
 we see that, up to a subsequence,
\begin{equation*}
\lim_{k\to\infty}(u_k,v_k)=(\tilde c,0) \;\;\text{in}\;\;C^{\gamma}(\overline\Omega)\;\;\text{for some}\;\;0<\gamma<1.
\end{equation*}
From the second equation of \eqref{equi1}, we have
\begin{equation}\label{inu}
-r|\Omega|+\int_\Omega \frac{l_k e^{\alpha m(x)}u_k}{1+e^{\alpha m(x)}  u_k}dx=-\ds\f{\theta}{\la_k}\int_{\Omega}\ds\f{|\nabla v_k|^2}{v_k^2}dx\leq0.
\end{equation}
Then taking $k\to\infty$ on both sides of \eqref{inu} yields
\begin{equation*}
l_*\leq \frac{r |\Omega|}{\int_\Omega \frac{ \tilde c e^{\alpha m(x)} }{1+\tilde c e^{\alpha m(x)} }dx} = \tilde l,
\end{equation*}
 which is a contradiction.

Therefore,
\begin{equation*}
\lim _{k\to\infty}(c_k,q_k)= \left(c_{0l_*},q_{0l_*}\right).
 \end{equation*}
This, combined with \eqref{limxieta} and Lemma \ref{ll2}, implies that, for sufficiently large $k$,
\begin{equation*}
(c_k,q_k,\xi_k,\eta_k)=\left(c^{(\la_k,l_k)},\xi^{(\la_k,l_k)},q^{(\la_k,l_k)},\eta^{(\la_k,l_k)}\right),
\end{equation*}
and consequently, for sufficiently large $k$,
\begin{equation*}
(u_k,v_k)=\left(u^{(\la_k,l_k)},v^{(\la_k,l_k)}\right).
\end{equation*}
This contradicts \eqref{usa}, and the uniqueness is obtained.

By using similar arguments as in the proof of the uniqueness, we can show that, for any $l\in[l_*-\delta_{l_*},l_*+\delta_{l_*}]$,
\begin{equation*}
\lim_{\la\to0}\left(c^{(\la,l)},q^{(\la,l)},\xi^{(\la,l)},\eta^{(\la,l)}\right)=(c_{0l},q_{0l}, 0, 0)\;\;\text{in}\;\;\mathbb R^2\times X_1^2.
\end{equation*}
Note from Lemma \ref{ll2} that $\left(c^{(\la,l)},q^{(\la,l)},\xi^{(\la,l)},\eta^{(\la,l)}\right)$ is continuously differentiable for $(\la,l)\in [0,\delta_{l_*}]\times [l_*-\delta_{l_*},l_*+\delta_{l_*}]$. Then we see that
\eqref{limcruv} holds. This completes the proof.
\end{proof}
From Theorem \ref{thlocal}, we see that \eqref{equi1} admits a unique positive solution when $(l,\la)$ is in a small neighborhood of $(l_*,0)$ with $l_*>\tilde l$. In the following, we will solve \eqref{equi1} for a wide range of parameters, see the rectangular region in Fig. \ref{fig11}.
\begin{theorem}\label{thglobal}
Let $\mathcal L:=[ \tilde l+\epsilon,1/\epsilon]$, where $0<\epsilon\ll 1$ and $ \tilde l$ is defined in Lemma \ref{l1}. Then
the following statements hold.
\begin{enumerate}
\item [$\rm{(i)}$] There exists $\delta_{\epsilon} >0$ such that, for $(\la,l)\in (0,\delta_\epsilon]\times \mathcal L$,
model \eqref{equi1} admits a unique positive solution
$(u^{(\la,l)},v^{(\la,l)})$.
\item [$\rm{(ii)}$] Let $(u^{(0,l)},v^{(0,l)})=\left(c_{0l},q_{0l}\right)$ for $l\in\mathcal L$,
where $\left(c_{0l},q_{0l}\right)$ is defined in Lemma \ref{l1}. Then $(u^{(\la,l)},v^{(\la,l)})$ is continuously differentiable for $(\la,l)\in [0,\delta_\epsilon]\times \mathcal L$, and $(u^{(\la,l)},v^{(\la,l)})$ can be decomposed as follows:
\begin{equation*}
u^{(\la,l)}=c^{(\la,l)}+\xi^{(\la,l)},\;\;v^{(\la,l)}=q^{(\la,l)}+\eta^{(\la,l)},
\end{equation*}
where
$\left(c^{(\la,l)},q^{(\la,l)},\xi^{(\la,l)},\eta^{(\la,l)}\right)\in \mathbb R^2\times X_1^2$ solves Eq. \eqref{F} for $(\la,l)\in [0,\delta_\epsilon]\times \mathcal L$.
\end{enumerate}
\end{theorem}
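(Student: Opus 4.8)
The plan is to upgrade the local result of Theorem \ref{thlocal} to the compact $l$-interval $\mathcal L = [\tilde l + \epsilon, 1/\epsilon]$ by a standard covering-plus-uniform-estimates argument. First I would observe that Theorem \ref{thlocal} (equivalently Lemma \ref{ll2}) gives, for each fixed $l_* \in \mathcal L$, a number $\delta_{l_*} > 0$ and a neighborhood $\mathcal O_{l_*}$ in which \eqref{F} has a unique solution, depending $C^1$ on $(\la,l) \in [0,\delta_{l_*}] \times [l_*-\delta_{l_*}, l_*+\delta_{l_*}]$. The intervals $(l_*-\delta_{l_*}, l_*+\delta_{l_*})$ cover the compact set $\mathcal L$, so finitely many of them, centered at $l_1,\dots,l_M$, already cover $\mathcal L$; setting $\delta_\epsilon := \min_{1\le j\le M} \delta_{l_j}$ yields, for $(\la,l) \in [0,\delta_\epsilon] \times \mathcal L$, a locally defined $C^1$ family of solutions of \eqref{F}. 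Because on each overlap the two local solutions agree for $\la = 0$ (both equal $(c_{0l}, q_{0l}, 0, 0)$ by Lemma \ref{l1}, since $c_{0l}, q_{0l}>0$ for $l > \tilde l$) and each is locally unique, the local branches glue to a single $C^1$ map $(\la,l) \mapsto \left(c^{(\la,l)}, q^{(\la,l)}, \xi^{(\la,l)}, \eta^{(\la,l)}\right)$ on $[0,\delta_\epsilon]\times\mathcal L$; this, together with $u^{(\la,l)} = c^{(\la,l)} + \xi^{(\la,l)}$, $v^{(\la,l)} = q^{(\la,l)} + \eta^{(\la,l)}$, gives a $C^1$ family of positive solutions of \eqref{equi1} and establishes existence in (i) and the regularity/decomposition claims in (ii).

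For the uniqueness statement in (i), I would run the same compactness contradiction argument used in the proof of Theorem \ref{thlocal}, but now allowing the $l$-components to range over all of $\mathcal L$ rather than a small neighborhood of a single $l_*$. Suppose uniqueness fails for every $\delta_\epsilon > 0$; then there is a sequence $(\la_k, l_k)$ with $\la_k \to 0$, $l_k \in \mathcal L$, and a positive solution $(u_k,v_k) \ne \left(u^{(\la_k,l_k)}, v^{(\la_k,l_k)}\right)$ of \eqref{equi1}. Since $\mathcal L$ is compact we may pass to a subsequence with $l_k \to l_\infty \in \mathcal L$, and in particular $l_\infty > \tilde l$. The a priori bounds carry over verbatim: $0 \le u_k \le \max_{\overline\Omega} m$ from the maximum principle applied to the first equation, then $\int_\Omega v_k \le \mathcal P_1$ and $\inf_{\overline\Omega} v_k \le \mathcal P_2$ from the second equation, then $\|v_k\|_2 \le C_0 \mathcal P_2$ via \cite[Lemma 2.1]{peng2009} (all constants uniform because $\sup_k l_k \le 1/\epsilon < \infty$). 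From $f_2 = f_4 = 0$ and $\la_k \to 0$ we get $\xi_k \to 0$, $\eta_k \to 0$ in $X_1$; the averages $c_k, q_k$ are bounded, so along a further subsequence $c_k \to c^*$, $q_k \to q^*$, and passing to the limit in $f_1 = f_3 = 0$ shows $(c^*, q^*)$ solves \eqref{solcq} with $l = l_\infty$, hence $(c^*,q^*) \in \{(0,0), (\tilde c, 0), (c_{0 l_\infty}, q_{0 l_\infty})\}$. The two exclusion arguments from the earlier proof apply unchanged — integrating the $v$-equation rules out $(0,0)$, and \eqref{inu} together with $l_\infty > \tilde l$ rules out $(\tilde c, 0)$ — so $(c_k, q_k) \to (c_{0 l_\infty}, q_{0 l_\infty})$. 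Then Lemma \ref{ll2} applied at $l_* = l_\infty$ forces $(u_k, v_k) = \left(u^{(\la_k, l_k)}, v^{(\la_k, l_k)}\right)$ for large $k$, a contradiction. Finally, the claim $(u^{(0,l)}, v^{(0,l)}) = (c_{0l}, q_{0l})$ with $(\xi^{(0,l)}, \eta^{(0,l)}) = (0,0)$ follows from the $\la = 0$ case of Lemma \ref{l1} as in \eqref{limcruv}.

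I expect the only real subtlety to be the gluing step: one must check that the finitely many locally unique branches from Lemma \ref{ll2} genuinely patch into a single globally defined, single-valued $C^1$ map, rather than merely a multifunction. This is handled by the uniqueness already proved — once (i)'s uniqueness is in hand, any two local branches must coincide on the overlap of their domains (for all $\la \in [0,\delta_\epsilon]$, not just $\la = 0$, since each is a positive solution of \eqref{equi1} there), so the patched object is well defined and inherits $C^1$-regularity locally, hence on all of $[0,\delta_\epsilon] \times \mathcal L$. Everything else is a routine transcription of the estimates and exclusion arguments from the proof of Theorem \ref{thlocal}, with "neighborhood of $l_*$" replaced by "$\mathcal L$" and all constants checked to be uniform over $l \in \mathcal L$ using $\sup_{l\in\mathcal L} l = 1/\epsilon < \infty$ and $\inf_{l \in \mathcal L} l = \tilde l + \epsilon > \tilde l$.
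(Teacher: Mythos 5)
Your proposal is correct and takes essentially the same route as the paper, whose entire proof is the finite-subcover argument with $\delta_\epsilon=\min_{1\le i\le s}\delta_{l_*^{(i)}}$. The only divergence is that you re-run the compactness/contradiction argument for uniqueness over all of $\mathcal L$; this is valid but redundant, since Theorem \ref{thlocal} already gives uniqueness of the positive solution of \eqref{equi1} on each local rectangle $(0,\delta_{l_*}]\times[l_*-\delta_{l_*},l_*+\delta_{l_*}]$, so the covering step delivers uniqueness --- and hence the gluing of the local branches into a single $C^1$ map --- for free.
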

\begin{proof}
It follows from Lemma \ref{thlocal} that, for any $l_*\in\mathcal L$, there exists $\delta_{l_*}$ ($0<\delta_{l_*}\ll1$) such that, for $(\la,l)\in (0,\delta_{l_*}]\times [ l_*-\delta_{l_*}, l_*+\delta_{l_*}]$,
system \eqref{equi1} admits a unique positive solution
$(u^{(\la,l)},v^{(\la,l)})$,
where $u^{(\la,l)}$ and $v^{(\la,l)}$ are defined in \eqref{uvf} and continuously differentiable for $(\la,l)\in [0,\delta_{l_*}]\times [l_*-\delta_{l_*},l_*+\delta_{l_*}]$.
Clearly,
$$\mathcal L \subseteq \bigcup_{ l_*\in\mathcal L}  {\left(  l_* - \delta_{l_*}, l_* +\delta_{l_*} \right)}. $$
Noticing that $\mathcal L$ is compact, we see that there exist finite open intervals, denoted by $\left( {l_*^{(i)}} -\delta_{ l^{(i)}_*},l^{(i)}_* +\delta_{l^{(i)}_*} \right)$ for $i=1,\ldots,s$, such that
$$\mathcal L \subseteq \bigcup_{i=1}^s  \left( {l_*^{(i)}} -\delta_{ l^{(i)}_*},{l_*^{(i)}} +\delta_{l^{(i)}_*} \right).$$
Choose $ \delta_\epsilon= \min_{1 \le i\le s} \delta _{ l_*^{(i)}}$. Then, for $(\la,l)\in (0,\delta_\epsilon]\times \mathcal L$,
 system \eqref{equi1} admits a unique positive solution $(u^{(\la,l)},v^{(\la,l)})$. By Lemma \ref{ll2} and Theorem \ref{thlocal}, we see that $(u^{(\la,l)},v^{(\la,l)})$ is continuously differentiable on $[0,\delta_\epsilon]\times \mathcal L$, if $(u^{(0,l)},v^{(0,l)})=\left(c_{0l},q_{0l}\right)$.
\end{proof}
Using similar arguments as in Lemma \ref{ll2} and Theorems \ref{thlocal} and \ref{thglobal}, we can show the nonexistence of positive solution under certain condition, and here we omit the proof for simplicity.
\begin{theorem}\label{thglobal2}
Let $\mathcal L_1:=[\epsilon_1,\tilde l-\epsilon_1]$, where $0<\epsilon_1\ll 1$ and $ \tilde l$ is defined in \eqref{l_*}. Then there exists $\delta_{\epsilon_1} >0$ such that, for $(\la,l)\in (0,\delta_{\epsilon_1}]\times \mathcal L_1$,
model \eqref{equi1} admits no positive solution.
\end{theorem}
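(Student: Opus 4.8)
The plan is to argue by contradiction along a sequence of parameters $(\la_k,l_k)\to(0,l^*)$ with $l^*\in\mathcal L_1$, reusing the a priori bounds and the limiting analysis of Theorem \ref{thlocal} to locate all possible limits, and invoking the implicit function theorem (as in Lemma \ref{ll2}) at the two \emph{nonnegative} zeros of the limiting system \eqref{solcq} that survive when $l<\tilde l$. The starting observation is that for $\la=0$ and any $l_*\in\mathcal L_1$, Lemma \ref{l1} gives the three zeros $(0,0,0,0)$, $(\tilde c,0,0,0)$, $(c_{0l_*},q_{0l_*},0,0)$ of $F(c,q,\xi,\eta,l_*,0)$, and since $l_*<\tilde l$ the last one has $q_{0l_*}<0$, hence cannot arise as the limit of positive solutions.

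First I would set up the local picture at $\la=0$. Applying the implicit function theorem to $F$ at $(0,0,0,0,l_*,0)$ and at $(\tilde c,0,0,0,l_*,0)$: the Fréchet derivative in $(c,q,\xi,\eta)$ at the semitrivial point has the same shape as the operator $G$ of Lemma \ref{ll2} with $q_{0l_*}$ replaced by $0$ and $c_{0l_*}$ by $\tilde c$, so $\hat\xi=\hat\eta=0$ as before and the residual $2\times2$ matrix is \emph{upper triangular} (since $P_{21}$ contains the factor $q_{0l_*}\to0$) with diagonal entries $P_{11}=-\int_\Omega e^{\alpha m(x)}m(x)\,dx\ne0$ and $P_{22}=\int_\Omega\big(-r+\tfrac{l_*\tilde c\,e^{\alpha m(x)}}{1+\tilde c\,e^{\alpha m(x)}}\big)\,dx=\tfrac{r|\Omega|}{\tilde l}(l_*-\tilde l)$, the latter nonzero precisely because $l_*\ne\tilde l$; invertibility at $(0,0,0,0,l_*,0)$ is even simpler. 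The IFT then produces, for $(\la,l)$ near $(0,l_*)$, locally unique branches of $F=0$ through these two points. Since the trivial state $(0,0,0,0)$ and the predator-free state $(c_\la,0,\xi_\la,0)$ — with $c_\la+\xi_\la$ the positive solution of the logistic problem $-Lu=\la e^{\alpha m(x)}u(m(x)-e^{\alpha m(x)}u)$, which satisfies $F=0$ identically in $l$ — are such solutions, they must \emph{be} these branches, and neither corresponds to a positive solution of \eqref{equi1}.

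Next I would run the compactness step. If the conclusion fails there is $(\la_k,l_k)\in(0,1/k]\times\mathcal L_1$ for which \eqref{equi1} has a positive solution $(u_k,v_k)$; after passing to a subsequence, $l_k\to l^*\in\mathcal L_1$. The a priori estimates from the proof of Theorem \ref{thlocal} carry over verbatim: $0\le u_k\le\max_{\overline\Omega}m$ by the maximum principle, $\int_\Omega v_k\,dx$ (hence $\inf_{\overline\Omega}v_k$) is bounded via $l_k\int_\Omega e^{\alpha m(x)}u_k(m(x)-e^{\alpha m(x)}u_k)\,dx=r\int_\Omega v_k\,dx$, and \cite[Lemma 2.1]{peng2009} bounds $\|v_k\|_2$. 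Writing $u_k=c_k+\xi_k$, $v_k=q_k+\eta_k$, the equations $f_2=f_4=0$ together with $\la_k\to0$ force $\xi_k\to0$, $\eta_k\to0$ in $X_1$; a further subsequence gives $c_k\to c^*\ge0$, $q_k\to q^*\ge0$, and passing to the limit in $f_1=f_3=0$ shows $(c^*,q^*)$ solves \eqref{solcq} with $l=l^*$. Because $l^*<\tilde l$, Lemma \ref{l1} excludes $(c_{0l^*},q_{0l^*})$, so $(c^*,q^*)\in\{(0,0),(\tilde c,0)\}$. Hence for $k$ large $(c_k,q_k,\xi_k,\eta_k)$ lies in the IFT neighborhood of the previous paragraph around $(0,0,0,0)$ or around $(\tilde c,0,0,0)$, with $(\la_k,l_k)$ in the corresponding parameter neighborhood, so local uniqueness forces $v_k=q_k+\eta_k\equiv0$, contradicting $v_k>0$; the usual compactness argument over the compact set $\mathcal L_1$ (as in Theorem \ref{thglobal}) then yields the uniform $\delta_{\epsilon_1}>0$.

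I expect the main obstacle to be the IFT at the semitrivial point $(\tilde c,0,0,0)$: checking invertibility of the Fréchet derivative there is exactly where the hypothesis $l<\tilde l$ enters, since at $l=\tilde l$ the matrix $(P_{ij})$ degenerates — consistent with $\tilde l$ being the value from which the positive branch of Theorem \ref{thglobal} bifurcates — and one must then recognize the branch as the predator-free steady state. If one prefers to avoid this, the last step can be replaced by a direct argument: in both surviving cases $u^*:=c^*\in\{0,\tilde c\}$ gives $\int_\Omega g^*\,dx<0$ for $g^*:=-r+\tfrac{l^*e^{\alpha m(x)}u^*}{1+e^{\alpha m(x)}u^*}$ (namely $-r|\Omega|$ if $u^*=0$, and $\tfrac{r|\Omega|}{\tilde l}(l^*-\tilde l)$ if $u^*=\tilde c$), while integrating the $v$-equation of \eqref{equi1} over $\Omega$ gives $\int_\Omega g_k v_k\,dx=0$ with $g_k:=-r+\tfrac{l_k e^{\alpha m(x)}u_k}{1+e^{\alpha m(x)}u_k}$; normalizing $\hat v_k=v_k/\|v_k\|_2$ and using $\theta\int_\Omega|\nabla\hat v_k|^2\,dx=\la_k\int_\Omega g_k\hat v_k^2\,dx\to0$ forces $\hat v_k\to|\Omega|^{-1/2}$ in $H^1$, whence $0=\int_\Omega g_k\hat v_k\,dx\to|\Omega|^{-1/2}\int_\Omega g^*\,dx<0$, a contradiction.
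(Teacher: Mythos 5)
Your proof is correct and is exactly the ``similar arguments'' route the paper indicates for Theorem \ref{thglobal2} (whose proof the paper omits): the a priori bounds and compactness from Theorem \ref{thlocal} reduce matters to the nonnegative zeros of the limiting system \eqref{solcq}, which for $l^*<\tilde l$ are only $(0,0)$ and $(\tilde c,0)$, and the implicit function theorem at those points (as in Lemma \ref{ll2}) identifies any nearby solution with the trivial or predator-free branch, forcing $v_k\equiv0$. You also correctly notice the one step that does not transfer verbatim --- the paper's inequality \eqref{inu} yields no contradiction at the limit $(\tilde c,0)$ when $l^*<\tilde l$ --- and both of your replacements are sound: the upper-triangular linearization at $(\tilde c,0,0,0)$ with $P_{11}=-\int_\Omega e^{\alpha m(x)}m(x)\,dx$ and $P_{22}=\frac{r|\Omega|}{\tilde l}(l_*-\tilde l)\ne0$, and the alternative normalization argument showing $0=\int_\Omega g_k\hat v_k\,dx\to|\Omega|^{-1/2}\int_\Omega g^*\,dx<0$.
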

\section{Stability and Hopf bifurcation}

Throughout this section, we let $\left(u^{(\la,l)},v^{(\la,l)}\right)$ be the unique positive solution of model \eqref{equi1} obtained in Theorem \ref{thglobal}.
In the following, we use $l$ as the bifurcation parameter, and show that under certain condition
there exists a Hopf bifurcation curve $l=l_\la$ when $\la$ is small.

Linearizing model \eqref{m4} at $\left(u^{(\la ,l)}, v^{(\la ,l)}\right)$, we obtain
\begin{equation*}
\begin{cases}
\tilde u_t= e^{-\alpha m(x)} \nabla  \cdot \left[e^{\alpha m(x)} \nabla \tilde u \right]  +\la \left( M_{1}^{(\la,l)}\tilde u +M_{2}^{(\la,l)} \tilde v\right),&x \in  \Omega,\;t > 0,\\
 \tilde v_t = \theta \Delta \tilde v  + \la \left( M_{3}^{(\la,l)}\tilde u +M_{4} ^{(\la,l)}\tilde v\right), &x \in  \Omega,\;t > 0,\\
\partial_n \tilde u = \partial_n \tilde v = 0, & x \in \partial \Omega,\;t > 0,\\
\end{cases}
\end{equation*}
where
\begin{equation}\label{Mi}
\begin{split}
&\ds M_{1}^{(\la,l)}= m(x)-2 e^{\alpha m(x)} u^{(\la ,l)}-\frac{v^{(\la ,l)}}{\left(1+  e^{\alpha m(x)} u^{(\la ,l)}  \right)^2} ,\;\;M_{2}^{(\la,l)}=-\frac{u^{(\la ,l)}}{1+e^{\alpha m(x)} u^{(\la ,l)}},\\
&\ds M_{3}^{(\la,l)}=\frac{l  e^{\alpha m(x)} v^{(\la ,l)}}{\left(1+ e^{\alpha m(x)} u^{(\la ,l)}  \right)^2},\;\;M_{4}^{(\la,l)}=-r+\frac{l e^{\alpha m(x)} u^{(\la ,l)}}{1+ e^{\alpha m(x)} u^{(\la ,l)} }.
\end{split}
\end{equation}
Let\begin{equation}\label{Al}
 A_l(\la) : = \left( {\begin{array}{cc}
e^{-\alpha m(x)} \nabla  \cdot \left[e^{\alpha m(x)} \nabla \right]&0\\
0&\theta \Delta
\end{array}} \right) + \la\left( {\begin{array}{cc}
M_1^{(\la,l)}&M_2^{(\la,l)}\\
M_3^{(\la,l)}& M_4^{(\la,l)}
\end{array}} \right).
\end{equation}
Then, $\mu \in \mathbb C$ is an eigenvalue of $A_{l}(\la)$ if and only if there exists $ (\varphi, \psi)^T\left(\ne (0,0)^T\right)\in X^2_{\mathbb C}$ such that
\begin{equation}\label{eig1}
\begin{cases}
&\ds e^{-\alpha m(x)} \nabla  \cdot \left[e^{\alpha m(x)} \nabla \varphi \right]  +\la \left( M_{1}^{(\la,l)} \varphi + M_{2}^{(\la,l)} \psi \right)-\mu \varphi =0, \\
&\ds \theta\Delta \psi  + \la \left( M_{3}^{(\la,l)} \varphi +M_{4}^{(\la,l)} \psi \right)-\mu \psi  =0.
\end{cases}
\end{equation}

We first give a \textit{priori} estimates for solutions of \eqref{eig1} for later use.

\begin{lemma} \label{l3}
Let $\mathcal L$ and $\delta_\epsilon$ be defined and obtained in Theorem \ref{thglobal}.
Suppose that $(\mu_\la ,l_\la, \varphi_\la, \psi_\la)$ solves \eqref{eig1} for $\la\in(0,\delta_\epsilon]$, where $\mathcal Re \mu_\la\ge0$, $(\varphi_\la, \psi_\la)^T\left(\ne (0,0)^T\right)\in X^2_{\mathbb C}$ and $l_\la\in \mathcal L$. Then $\left|\mu_\la/\la\right|$ is bounded for $\la \in (0, \delta_\epsilon]$.
\end{lemma}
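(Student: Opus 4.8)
The plan is a weighted energy estimate: pair the eigenvalue system \eqref{eig1} with the eigenfunction $(\varphi_\la,\psi_\la)$ itself and read off that both $\mathcal{Re}\,\mu_\la$ and $\mathcal{Im}\,\mu_\la$ are $O(\la)$, uniformly for $\la\in(0,\delta_\epsilon]$. The only preparation needed is a uniform bound on the coefficients in \eqref{Mi}. By Theorem \ref{thglobal}(ii) the map $(\la,l)\mapsto(u^{(\la,l)},v^{(\la,l)})$ is continuous on the compact set $[0,\delta_\epsilon]\times\mathcal L$ with values in $X=H^2(\Omega)$, and since $1\le N\le 3$ we have $H^2(\Omega)\hookrightarrow C(\overline\Omega)$; hence $u^{(\la,l)},v^{(\la,l)}$ are bounded in $C(\overline\Omega)$, uniformly in $(\la,l)\in[0,\delta_\epsilon]\times\mathcal L$. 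As $u^{(\la,l)}\ge0$ and $m\in C^2(\overline\Omega)$, the denominators $1+e^{\alpha m(x)}u^{(\la,l)}$ are $\ge1$, so \eqref{Mi} yields $M_*>0$ with $\|M_i^{(\la,l)}\|_{L^\infty(\Omega)}\le M_*$ for all $i\in\{1,2,3,4\}$ and all $(\la,l)\in[0,\delta_\epsilon]\times\mathcal L$. I also set $\beta_1=\min_{\overline\Omega}e^{\alpha m}>0$ and $\beta_2=\max_{\overline\Omega}e^{\alpha m}$.

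Next, for a solution $(\mu_\la,l_\la,\varphi_\la,\psi_\la)$ as in the statement, multiply the first equation of \eqref{eig1} by $e^{\alpha m(x)}\overline{\varphi_\la}$ and integrate over $\Omega$; since $e^{\alpha m}e^{-\alpha m}\nabla\cdot[e^{\alpha m}\nabla\varphi_\la]=\nabla\cdot[e^{\alpha m}\nabla\varphi_\la]$ and $\partial_n\varphi_\la=0$, integration by parts gives
\begin{equation*}
-\int_\Omega e^{\alpha m}|\nabla\varphi_\la|^2\,dx+\la\int_\Omega e^{\alpha m}\left(M_1^{(\la,l_\la)}|\varphi_\la|^2+M_2^{(\la,l_\la)}\psi_\la\overline{\varphi_\la}\right)dx=\mu_\la\int_\Omega e^{\alpha m}|\varphi_\la|^2\,dx;
\end{equation*}
multiplying the second equation by $\overline{\psi_\la}$ and integrating gives
\begin{equation*}
-\theta\int_\Omega|\nabla\psi_\la|^2\,dx+\la\int_\Omega\left(M_3^{(\la,l_\la)}\varphi_\la\overline{\psi_\la}+M_4^{(\la,l_\la)}|\psi_\la|^2\right)dx=\mu_\la\int_\Omega|\psi_\la|^2\,dx.
\end{equation*}
Write $A_\la=\int_\Omega e^{\alpha m}|\varphi_\la|^2\,dx\ge0$, $B_\la=\int_\Omega|\psi_\la|^2\,dx\ge0$ and $N_\la=\|\varphi_\la\|_2^2+\|\psi_\la\|_2^2>0$; then $A_\la+B_\la\ge\min(\beta_1,1)N_\la>0$.

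Finally, take real and imaginary parts. In the real parts, $\mathcal{Re}\,\mu_\la\ge0$ makes the right sides nonnegative multiples of $A_\la$, $B_\la$; discarding the nonpositive gradient terms and bounding the $M_i$-terms via $M_*$ and $|\psi_\la\overline{\varphi_\la}|\le\frac{1}{2}(|\varphi_\la|^2+|\psi_\la|^2)$ gives $0\le(\mathcal{Re}\,\mu_\la)A_\la\le C\la N_\la$ and $0\le(\mathcal{Re}\,\mu_\la)B_\la\le C\la N_\la$; adding and dividing by $A_\la+B_\la\ge\min(\beta_1,1)N_\la$ yields $0\le\mathcal{Re}\,\mu_\la\le C'\la$. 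In the imaginary parts the gradient terms and the real quantities $\int_\Omega e^{\alpha m}M_1^{(\la,l_\la)}|\varphi_\la|^2$, $\int_\Omega M_4^{(\la,l_\la)}|\psi_\la|^2$ drop out, leaving $|\mathcal{Im}\,\mu_\la|A_\la\le\la\beta_2 M_*\|\varphi_\la\|_2\|\psi_\la\|_2\le C\la N_\la$ and likewise $|\mathcal{Im}\,\mu_\la|B_\la\le C\la N_\la$; the same averaging gives $|\mathcal{Im}\,\mu_\la|\le C'\la$. Hence $|\mu_\la|\le|\mathcal{Re}\,\mu_\la|+|\mathcal{Im}\,\mu_\la|\le C''\la$ with $C''$ independent of $\la\in(0,\delta_\epsilon]$, i.e. $|\mu_\la/\la|$ is bounded.

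I do not expect a genuine obstacle; the one step deserving care is the uniform coefficient bound $M_*$, which rests on the continuity-plus-compactness in Theorem \ref{thglobal} together with $H^2(\Omega)\hookrightarrow C(\overline\Omega)$ (valid since $N\le 3$). The remainder is a routine test-function and Cauchy--Schwarz computation; note also that the argument never divides by $A_\la$ or $B_\la$ alone, so the possible vanishing of $\varphi_\la$ or $\psi_\la$ causes no trouble.
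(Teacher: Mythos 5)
Your proof is correct and follows essentially the same route as the paper: test the first equation of \eqref{eig1} against $e^{\alpha m(x)}\overline{\varphi_\la}$ and the second against $\overline{\psi_\la}$, discard the nonpositive gradient terms, and use the uniform $L^\infty$ bound on the $M_i^{(\la,l)}$ coming from the continuity of $(u^{(\la,l)},v^{(\la,l)})$ on the compact set $[0,\delta_\epsilon]\times\mathcal L$ together with $H^2(\Omega)\hookrightarrow C(\overline\Omega)$. The only cosmetic difference is that you keep the two integral identities separate and add them at the end, whereas the paper sums the two equations before integrating; the resulting estimates for $\mathcal Re(\mu_\la/\la)$ and $\mathcal Im(\mu_\la/\la)$ are identical.
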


\begin{proof}
Substituting $(\mu_\la ,l_\la, \varphi_\la, \psi_\la)$ into Eq. \eqref{eig1}, we have
\begin{equation}\label{eig12}
\begin{cases}
&\ds e^{-\alpha m(x)} \nabla  \cdot \left[e^{\alpha m(x)} \nabla \varphi_\la \right]  +\la \left( M_{1}^{(\la,l_\la)} \varphi_\la + M_{2}^{(\la,l_\la)} \psi_\la \right)-\mu_\la \varphi_\la =0, \\
&\ds \theta\Delta \psi_\la  + \la \left( M_{3}^{(\la,l_\la)} \varphi_\la +M_{4}^{(\la,l_\la)} \psi_\la \right)-\mu_\la \psi_\la  =0.
\end{cases}
\end{equation}
Then multiplying the first and second equations of \eqref{eig12} by $  e^{\alpha m(x)}\overline \varphi_\la$ and $\overline \psi_\la$, respectively,
summing these two equations, and integrating the result over $\Omega$, we have
\begin{equation}\label{infesti}
\begin{split}
\mu_\la \int_\Omega \left(e^{\alpha m(x)}  |\varphi _{\la}|^2+|\psi _{\la}|^2 \right)dx=&\langle \varphi _{\la},  \nabla  \cdot \left[e^{\alpha m(x)} \nabla \varphi_\la \right] \rangle+\langle \psi  _{\la},  \Delta \psi_\la  \rangle\\
&+\la \int_\Omega e^{\alpha m(x)} \left( M_{1}^{(\la,l_\la)} |\varphi _{\la}|^2+M_{2}^{(\la,l_\la)}\overline\varphi _{\la} \psi _{\la}\right)dx\\
&+\la \int_\Omega\left( M_{3}^{(\la,l_\la)}\varphi _{\la} \overline \psi _{\la} +M_{4}^{(\la,l_\la)} |\psi _{\la}|^2 \right)dx.
\end{split}
\end{equation}
We see from the divergence theorem that
\begin{equation}\label{divg}
\langle \varphi _{\la},  \nabla  \cdot \left[e^{\alpha m(x)} \nabla \varphi_\la \right] \rangle+\langle   \psi  _{\la},  \Delta \psi_\la  \rangle=-\int_\Omega e^{\alpha m(x)} |\nabla\varphi _{\la} |^2dx-\int_\Omega  |\nabla\psi _{\la} |^2dx\leq0.\\
\end{equation}
Noticing from Theorem \ref{thglobal} that
$\left(u^{(\la,l)},v^{(\la,l)}\right):[0,\delta_\epsilon]\times \mathcal L\to X^2 $ is continuously differentiable,
we see from the imbedding theorems that there exists a positive constant $\mathcal P_*$ such that
\begin{equation}\label{bondM}
\left\|M^{(\la,l)}_i\right\|_\infty\le \mathcal P_*\;\;\text{for}\;\;(\la,l)\in [0,\delta_\epsilon]\times \mathcal L\;\;\text{and}\;\;i=1,2,3,4.
\end{equation}
Then, we see from \eqref{infesti}-\eqref{bondM} that
\begin{equation*}
\begin{split}
0\le \mathcal Re\left( \frac{\mu_\la}{\la}\right)\leq& \mathcal P_*\left(1+\ds\f{\int_\Omega \left( e^{\alpha m(x)} \overline\varphi _{\la} \psi _{\la} + \varphi _{\la} \overline \psi _{\la} \right)dx}{ \int_\Omega \left(e^{\alpha m(x)}  |\varphi _{\la}|^2+|\psi _{\la}|^2 \right)dx}\right)\\
\leq&\mathcal P_*\left(1+e^{\alpha \max_{x\in\overline\Omega}m(x)}\right).
\end{split}
\end{equation*}
Similarly, we can obtain that
\begin{equation*}
\left|\mathcal Im\left( \frac{\mu_\la}{\la}\right)\right| \leq \mathcal P_*\left(1+e^{\alpha \max_{x\in\overline\Omega}m(x)}\right).
\end{equation*}
This completes the proof.
\end{proof}

To analyze the stability of
$\left(u^{(\la,l)},v^{(\la,l)}\right)$, we need to consider whether the eigenvalues of \eqref{eig1} could pass through the imaginary axis.
It follows from Lemma \ref{l3} that if $\mu={\rm i} \sigma$ is an eigenvalue of \eqref{eig1}, then $\nu=\sigma/\la$ is bounded for $\la\in(0,\delta_\epsilon]$, where $\delta_\epsilon$ is obtained in Theorem \ref{thglobal}.
Substituting $\mu={\rm i}\la \nu\;(\nu\geq0)$ into \eqref{eig1}, we have
\begin{equation}\label{v1}
\begin{cases}
&\ds L \varphi  +\la e^{\alpha m(x)} \left( M_{1}^{(\la,l)} \varphi + M_{2}^{(\la,l)} \psi \right)-{\rm i}\la \nu e^{\alpha m(x)} \varphi =0, \\
&\ds \theta\Delta \psi  + \la \left( M_{3}^{(\la,l)} \varphi +M_{4}^{(\la,l)} \psi \right)-{\rm i}\la \nu \psi  =0.
\end{cases}
\end{equation}
Ignoring a scalar factor,
$( \varphi, \psi)^T\in X^2_{\mathbb C}$ in \eqref{v1} can be decomposed as follows:
\begin{equation}\label{vps}
\begin{cases}
 \varphi =\delta+ w, \;\;\text{where}\; \delta\ge0\;\;\text{and}\;\; w\in \left(X_{1}\right)_{\mathbb C},\\
 \psi  =(s_{1}+{\rm i}s_{ 2})+ z,\;\;\text{where}\;s_1,s_2\in\mathbb R\;\;\text{and}\;\; z \in \left(X_{1}\right)_{\mathbb C},\\
\| \varphi\|_2^{2}+\| \psi\|_2^{2}=| \Omega |.
\end{cases}
\end{equation}

Now we can obtain an equivalent problem of \eqref{v1} in the following.
\begin{lemma}\label{lema32}
Let  $(  \varphi,  \psi)$ be defined in \eqref{vps}. Then
$(\varphi,  \psi,\nu,l)$ solves \eqref{v1} with $\nu\ge0$, $l\in\mathcal L$, if and only if $( \delta, s_1,s_2, \nu,w,  z,l)$ solves
\begin{equation}\label{H}
\begin{cases}
  { H}(\delta, s_1,s_2, \nu, w,z,l,\la)=   0,\\
\delta\ge0,\;s_1 ,s_2\in\mathbb R,\;\nu\geq0,\;l\in\mathcal L,\; w,  z\in \left(X_{1}\right)_{\mathbb C}.
 \end{cases}
\end{equation}
Here
$$ H(\delta, s_1,s_2, \nu, w,z,l,\la)=( h_1, h_2,h_3,h_4,h_5)^T$$ is a continuously differentiable mapping from $\mathbb R^4\times \left(\left(X_{1}\right)_{\mathbb C}\right)^2\times\mathcal L\times [0,\delta _\epsilon]$ to $\left(\mathbb C\times \left(Y_{1}\right)_{\mathbb C}\right)^2\times \mathbb R$,
 where
\begin{equation*}
\begin{split}
h_1(\delta, s_1,s_2, \nu, w,z,l,\la):=&\ds \int_\Omega  {  e^{\alpha m(x)} \left[ M_{1}^{(\la,l)}(\delta+w)+ M_{2}^{(\la,l)} (s_1 + {\rm i} s_2+z)  \right]} dx\\
&-{\rm i} \nu  \int_\Omega e^{\alpha m(x)} (\delta+w)dx ,\\
h_{2}(\delta, s_1,s_2, \nu, w,z,l,\la):=&\ds  L w  +\la e^{\alpha m(x)}  \left[ M_{1}^{(\la,l)} (\delta+w) + M_{2}^{(\la,l)} (s_1 + {\rm i} s_2+z)  \right]\\
&-{\rm i}\la \nu e^{\alpha m(x)}  (\delta+w) -\frac{\la }{ | \Omega |} h_1,\\
h_{3}(\delta, s_1,s_2, \nu, w,z,l,\la):=& \ds \int_\Omega  {  \left[ M_{3}^{(\la,l)}(\delta+w)+ M_{4}^{(\la,l)} (s_1 + {\rm i} s_2+z)  \right]} dx\\
&-{\rm i} \nu (s_1 + {\rm i} s_2)| \Omega | ,\\
h_{4}(\delta, s_1,s_2, \nu, w,z,l,\la):=&\ds \theta\Delta z  + \la \left[ M_{3}^{(\la,l)} (\delta+w) +M_{4}^{(\la,l)} (s_1 + {\rm i} s_2+z) \right]\\
&-{\rm i}\la \nu (s_1 + {\rm i} s_2+z)  -\frac{\la }{ | \Omega |} h_3,\\
h_{5}( \delta,s_1,s_2,  w,  z,\nu,l,\la):=&\ds  | \Omega |\left(\delta^2 +s_1^2+s_2^2-1\right) +\| w\|_2^2+\| z\|_2^2,
\end{split}
\end{equation*}
and $M_{i}^{(\la,l)}$ are defined in \eqref{Mi} for $i=1,2,3,4$.
\end{lemma}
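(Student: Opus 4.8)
The plan is to prove the equivalence by direct substitution, using the orthogonal decomposition \eqref{decom} to split each of the two equations in \eqref{v1} into its $\Omega$-average (a scalar equation, valued in $\mathbb C$) and its projection onto $(Y_1)_{\mathbb C}$; reading off the five resulting identities produces exactly $h_1=\cdots=h_5=0$. Continuous differentiability of $H$ will then follow from the regularity of the positive steady state recorded in Theorem \ref{thglobal}.

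\emph{Forward direction.} Let $(\varphi,\psi,\nu,l)$ solve \eqref{v1} with $\nu\ge0$, $l\in\mathcal L$ and $\la>0$. Since $(\varphi,\psi)^T\ne(0,0)^T$, after multiplying $(\varphi,\psi)$ by a suitable nonzero complex scalar (the scalar factor ignored just before \eqref{vps}) we may assume $\frac1{|\Omega|}\int_\Omega\varphi\,dx=\delta\ge0$ and $\|\varphi\|_2^2+\|\psi\|_2^2=|\Omega|$; then $w:=\varphi-\delta$ and $z:=\psi-(s_1+{\rm i}s_2)$, where $s_1+{\rm i}s_2:=\frac1{|\Omega|}\int_\Omega\psi\,dx$, lie in $(X_1)_{\mathbb C}$, and the normalization is precisely $h_5=0$ once one uses $\|\varphi\|_2^2=\delta^2|\Omega|+\|w\|_2^2$ and $\|\psi\|_2^2=(s_1^2+s_2^2)|\Omega|+\|z\|_2^2$ ($L^2$-orthogonality of constants and $X_1$). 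Integrating the first equation of \eqref{v1} over $\Omega$, the divergence theorem and the no-flux condition kill $\int_\Omega L\varphi\,dx$, and dividing by $\la$ gives $h_1=0$; subtracting $\frac1{|\Omega|}$ times that integral from the equation itself, and using $L\varphi=Lw$ since $L$ annihilates constants, gives $h_2=0\in(Y_1)_{\mathbb C}$. The identical treatment of the second equation of \eqref{v1}, now using $\int_\Omega\psi\,dx=(s_1+{\rm i}s_2)|\Omega|$ because $\int_\Omega z\,dx=0$, yields $h_3=0$ and $h_4=0$, so $(\delta,s_1,s_2,\nu,w,z,l)$ solves \eqref{H}.

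\emph{Converse.} Given a solution of \eqref{H}, set $\varphi=\delta+w$ and $\psi=s_1+{\rm i}s_2+z$. Since $h_1=0$, the equation $h_2=0$ reads $Lw+\la e^{\alpha m(x)}\bigl(M_1^{(\la,l)}\varphi+M_2^{(\la,l)}\psi\bigr)-{\rm i}\la\nu e^{\alpha m(x)}\varphi=0$, which is the first equation of \eqref{v1} because $Lw=L\varphi$; likewise $h_3=0$ turns $h_4=0$ into the second equation of \eqref{v1}. From $h_5=0$ one gets $\|\varphi\|_2^2+\|\psi\|_2^2=|\Omega|\ne0$, hence $(\varphi,\psi)^T\ne(0,0)^T$, and $\nu\ge0$, $l\in\mathcal L$ carry over. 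This establishes the equivalence. Finally, $H$ maps into $\bigl(\mathbb C\times(Y_1)_{\mathbb C}\bigr)^2\times\mathbb R$ because $h_1,h_3$ are complex scalars, $h_5$ is real, and $h_2,h_4$ were formed by subtracting their own $\Omega$-averages from expressions whose operator parts $Lw$ and $\theta\Delta z$ already belong to $(Y_1)_{\mathbb C}$; and $H$ is $C^1$ because it is affine in $(\delta,s_1,s_2,\nu,w,z)$ apart from the polynomial term $h_5$, while its dependence on $(l,\la)$ enters only through the coefficients $M_i^{(\la,l)}$, which by \eqref{Mi}, Theorem \ref{thglobal} and the embedding $X\hookrightarrow C^{\gamma}(\overline\Omega)$ are $C^1$ in $(\la,l)\in[0,\delta_\epsilon]\times\mathcal L$.

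I do not expect a genuine obstacle: the statement is essentially a change of variables for the eigenvalue problem \eqref{v1}. The only points requiring care are the consistent bookkeeping with \eqref{decom}, so that the average equation and the $(Y_1)_{\mathbb C}$-equation together are genuinely equivalent to each scalar PDE in \eqref{v1}, and the verification that $h_2,h_4\in(Y_1)_{\mathbb C}$ and $H\in C^1$, both of which rest on the regularity of $\left(u^{(\la,l)},v^{(\la,l)}\right)$ provided by Theorem \ref{thglobal}.
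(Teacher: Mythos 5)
Your proof is correct and takes essentially the same route as the paper: split each equation of \eqref{v1} into its $\Omega$-average and its mean-zero projection via \eqref{fcom}, which yields exactly $h_1,\dots,h_4$, with $h_5$ encoding the normalization in \eqref{vps}, and obtain the $C^1$ property of $H$ from the regularity of $\left(u^{(\la,l)},v^{(\la,l)}\right)$ in Theorem \ref{thglobal}. Your explicit use of $\la>0$ when dividing by $\la$ in the forward direction is the correct reading of the statement (for $\la=0$ only the converse implication survives), a point the paper's terser proof leaves implicit.
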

\begin{proof}
Note that $f=c+z$ for any $f\in Y_{\mathbb C}$, where
\begin{equation}\label{fcom}
c=\f{1}{|\Omega|}\int_\Omega f dx\in\mathbb C, \;\;
z=f-\f{1}{|\Omega|}\int_\Omega fdx\in \left(Y_1\right)_{\mathbb C}.
\end{equation}
A direct computation implies that
${ H}(\delta, s_1,s_2, \nu, w,z,l,\la)$ is a continuously differentiable mapping from $\mathbb R^4\times \left(\left(X_{1}\right)_{\mathbb C}\right)^2\times\mathcal L\times [0,\delta _\epsilon]$ to $\left(\mathbb C\times \left(Y_{1}\right)_{\mathbb C}\right)^2\times \mathbb R$.
Denote the left sides of the two equations of \eqref{v1} as $G_1$ and $G_2$, respectively. That is,
\begin{equation*}
\begin{split}
&G_1(\varphi,  \psi,\nu,l,\la):=\ds L \varphi  +\la e^{\alpha m(x)} \left( M_{1}^{(\la,l)} \varphi + M_{2}^{(\la,l)} \psi \right)-{\rm i}\la \nu e^{\alpha m(x)} \varphi, \\
&G_2(\varphi,  \psi,\nu,l,\la):=\ds \theta\Delta \psi  + \la \left( M_{3}^{(\la,l)} \varphi +M_{4}^{(\la,l)} \psi \right)-{\rm i}\la \nu \psi.
\end{split}
\end{equation*}
Plugging \eqref{vps} into \eqref{v1}, wee see from \eqref{fcom} that
$G_1(\varphi,  \psi,\nu,l,\la)=0$ if and only if $$h_i(\delta, s_1,s_2, \nu, w,z,l,\la)=0\;\; \text{for}\;\; i=1,2,$$ and $G_2(\varphi,  \psi,\nu,l,\la)=0$ if and only if $$h_i(\delta, s_1,s_2, \nu, w,z,l,\la)=0\;\; \text{for}\;\; i=3,4.$$ This completes the proof.
\end{proof}
Note from \eqref{Mi} that
\begin{equation}\label{Mi0}
\begin{split}
&\ds M_{1}^{(0,l)} = m(x)-2 c _{0l} e^{\alpha m(x)} -\frac{ q _{0l} }{\left(1+  c _{0l} e^{\alpha m(x)} \right)^2} ,\;\; M_{2}^{(0,l)}=-\frac{ c _{0l}   }{1+ c _{0l} e^{\alpha m(x)} }<0,\\
&\ds M_{3}^{(0,l)} dx=\frac{l q _{0l} e^{\alpha m(x)}  }{\left(1+ c _{0l} e^{\alpha m(x)}  \right)^2} >0 ,\;\;M_{4}^{(0,l)} =-r+\frac{ l c _{0l} e^{\alpha m(x)} }{1+ c _{0l} e^{\alpha m(x)} },
\end{split}
\end{equation}
where $c _{0l}$ and $q_{0l}$ are defined in Lemma \ref{l1}. Then we give the following result for further application.
\begin{lemma}\label{suppst}
Let  $\mathcal S(l):=\int_\Omega e^{\alpha m(x)} M_1^{(0,l)}dx$, where $l\in\mathcal L$, and $\mathcal L:=[ \tilde l+\epsilon,1/\epsilon]$ is defined in Theorem \ref{thglobal} with $0<\epsilon\ll1$,
and let $\mathcal T (\alpha):=\int_\Omega e^{\alpha m(x)}( m(x) -1) dx$. Then the following two statements hold:
\begin{enumerate}
\item [$\rm{(i)}$] If $\mathcal T (\alpha)<0$, then $\mathcal S(l)<0$ for all $l\in \mathcal L$;
\item [$\rm{(ii)}$] If $\mathcal T (\alpha)>0$, then there exists $l_0\in {int}( \mathcal  L)=(\tilde l+\epsilon,1/\epsilon)$ such that $\mathcal S(l_0)=0$, $\mathcal S'(l_0)>0$, $\mathcal S(l)<0$ for $l\in[ \tilde l+\epsilon,l_0)$, and $\mathcal S(l)>0$ for $l\in(l_0,1/\epsilon]$.
\end{enumerate}
\end{lemma}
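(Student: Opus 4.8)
The plan is to reduce $\mathcal S(l)$ to a scalar function of $c_{0l}$ alone and then exploit a cancellation in its derivative. First I would insert the expression for $M_1^{(0,l)}$ from \eqref{Mi0} and abbreviate
$$A:=\int_\Omega e^{\alpha m(x)}m(x)\,dx,\quad B:=\int_\Omega e^{2\alpha m(x)}\,dx,\quad D(c):=\int_\Omega\frac{e^{\alpha m(x)}}{1+ce^{\alpha m(x)}}\,dx,\quad I_2(c):=\int_\Omega\frac{e^{2\alpha m(x)}}{(1+ce^{\alpha m(x)})^2}\,dx,$$
so that $\mathcal S(l)=A-2c_{0l}B-q_{0l}\int_\Omega e^{\alpha m(x)}(1+c_{0l}e^{\alpha m(x)})^{-2}\,dx$. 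Using the first identity of \eqref{solcq2} in the form $A-c_{0l}B=q_{0l}D(c_{0l})$ together with the pointwise identity $(1+ce^{\alpha m(x)})^{-1}-(1+ce^{\alpha m(x)})^{-2}=ce^{\alpha m(x)}(1+ce^{\alpha m(x)})^{-2}$, a short computation yields
$$\mathcal S(l)=c_{0l}\big(q_{0l}I_2(c_{0l})-B\big)=\frac{c_{0l}}{D(c_{0l})}\,\Phi(c_{0l}),\qquad\text{where}\quad \Phi(c):=(A-cB)I_2(c)-BD(c).$$
Since $c_{0l}>0$ and $D(c_{0l})>0$ for $l>\tilde l$, the sign of $\mathcal S(l)$ equals that of $\Phi(c_{0l})$, so the whole lemma reduces to a one-variable study of $\Phi$ on $(0,\tilde c)$.

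The decisive point is that $D'(c)=-I_2(c)$ (differentiate under the integral). Hence in $\Phi'(c)=-BI_2(c)+(A-cB)I_2'(c)-BD'(c)$ the first and last terms cancel, leaving
$$\Phi'(c)=(A-cB)I_2'(c),\qquad I_2'(c)=-2\int_\Omega\frac{e^{3\alpha m(x)}}{(1+ce^{\alpha m(x)})^3}\,dx<0.$$
Because $\tilde c=A/B$ by \eqref{l_*}, we have $A-cB>0$ for $c\in[0,\tilde c)$, so $\Phi$ is strictly decreasing on $[0,\tilde c]$. Moreover $\Phi(0)=AB-B\int_\Omega e^{\alpha m(x)}\,dx=B\,\mathcal T(\alpha)$ and $\Phi(\tilde c)=-BD(\tilde c)<0$.

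It remains to transfer this back through $l\mapsto c_{0l}$, which by Lemma \ref{l1} is a strictly decreasing $C^1$ bijection of $(\tilde l,\infty)$ onto $(0,\tilde c)$, with $c_{0l}\to\tilde c$ as $l\to\tilde l^{+}$ and $c_{0l}\to0$ as $l\to\infty$. If $\mathcal T(\alpha)<0$, then $\Phi(0)<0$, hence $\Phi<0$ on all of $(0,\tilde c]$ by monotonicity, so $\mathcal S(l)<0$ for every $l>\tilde l$ and in particular on $\mathcal L$; this is (i). If $\mathcal T(\alpha)>0$, then $\Phi(0)>0>\Phi(\tilde c)$, so $\Phi$ has a unique zero $c_0\in(0,\tilde c)$ with $\Phi>0$ on $[0,c_0)$ and $\Phi<0$ on $(c_0,\tilde c]$; letting $l_0$ be the unique value with $c_{0l_0}=c_0$, we get $\mathcal S(l_0)=0$, $\mathcal S(l)<0$ for $\tilde l<l<l_0$, $\mathcal S(l)>0$ for $l>l_0$, and $l_0\in(\tilde l+\epsilon,1/\epsilon)$ once $\epsilon$ is chosen small. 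For the transversality, differentiating $\mathcal S(l)=\frac{c_{0l}}{D(c_{0l})}\Phi(c_{0l})$ at $l_0$ and using $\Phi(c_0)=0$ leaves $\mathcal S'(l_0)=\frac{c_{0l_0}}{D(c_0)}\,\Phi'(c_0)\,\frac{d c_{0l}}{dl}\big|_{l=l_0}$, which is positive because $\Phi'(c_0)<0$ (as $c_0<\tilde c$) and $\frac{d c_{0l}}{dl}<0$ by Lemma \ref{l1}; this is (ii).

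I expect the only real obstacle to be computational: organizing the reduction of $\mathcal S(l)$ to $\Phi(c_{0l})$ so that the relations from \eqref{solcq2}/\eqref{c0l} enter in the right place, and --- crucially --- spotting the cancellation from $D'(c)=-I_2(c)$, which collapses the a priori opaque $\Phi'(c)$ into the manifestly negative $(A-cB)I_2'(c)$. Beyond that, the only subtlety is bookkeeping: $l\mapsto c_{0l}$ reverses orientation, so the monotonicity of $\Phi$ in $c$ must be read off carefully when translated into statements in $l$.
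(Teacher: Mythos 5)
Your proof is correct, and it follows the same overall skeleton as the paper's: both reduce $\mathcal S(l)$ to a one-variable function of $c_{0l}$, prove that function is strictly monotone on $(0,\tilde c)$, evaluate it at the endpoints (obtaining a positive multiple of $\mathcal T(\alpha)$ at $c=0$ and a manifestly signed quantity at $c=\tilde c$), and then transport the conclusion back through the decreasing bijection $l\mapsto c_{0l}$. The difference lies in the normalization of the auxiliary function and, consequently, in how monotonicity is proved. The paper works with $\mathcal S_3(c)=\frac{\mathcal S_1'(c)}{\mathcal S_1(c)}\int_\Omega\left[e^{\alpha m}m-ce^{2\alpha m}\right]dx+\int_\Omega e^{2\alpha m}dx$ (in your notation $\mathcal S_3=-\Phi/D$, so the two functions have the same zeros and opposite signs), and establishing $\mathcal S_3'>0$ requires the H\"older/Cauchy--Schwarz inequality $[\mathcal S_1'(c)]^2\le\tfrac12\mathcal S_1''(c)\mathcal S_1(c)$ to show that $\mathcal S_1'/\mathcal S_1$ is increasing. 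By clearing the denominator $D(c)$ you arrange the cancellation $D'=-I_2$, which collapses $\Phi'(c)$ to the single term $(A-cB)I_2'(c)<0$ and eliminates the need for any inequality beyond positivity of the integrands; this is a genuine simplification of the one nontrivial estimate in the paper's argument. The remaining steps --- the identity $\mathcal S(l)=\frac{c_{0l}}{D(c_{0l})}\Phi(c_{0l})$ (which I checked against \eqref{sl1} and \eqref{solcq2}), the endpoint values $\Phi(0)=B\,\mathcal T(\alpha)$ and $\Phi(\tilde c)=-BD(\tilde c)<0$, the placement of $l_0$ in the interior of $\mathcal L$ for small $\epsilon$, and the transversality computation $\mathcal S'(l_0)=\frac{c_0}{D(c_0)}\Phi'(c_0)\left.\frac{dc_{0l}}{dl}\right|_{l=l_0}>0$ using $\Phi(c_0)=0$ --- all match the paper's logic and are correct.
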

\begin{proof}
We construct an auxiliary function:
\begin{equation*}
\mathcal S_1(c)= \int_\Omega  \frac{ e^{\alpha m(x)} }{ 1+ c   e^{\alpha m(x)} } dx.
\end{equation*}
A direct computation implies that
\begin{equation}\label{deriv}
\mathcal S'_1(c )=-\int_\Omega  \frac{ e^{2\alpha m(x)} }{ \left(1+ c   e^{\alpha m(x)} \right)^2 } dx<0,\;\;\mathcal S''_1(c )=2\int_\Omega  \frac{ e^{3\alpha m(x)} }{ \left(1+ c   e^{\alpha m(x)} \right)^3 } dx>0.\\
\end{equation}
Clearly, we see from \eqref{Mi0} that
\begin{equation}\label{sl1}
\mathcal S(l)=\int_\Omega \left[e^{\alpha m(x)} m(x)-2 c _{0l} e^{2 \alpha m(x)}\right] dx- q _{0l}  \int_\Omega  \frac{ e^{\alpha m(x)} }{\left(1+ c _{0l}  e^{\alpha m(x)} \right)^2} dx.
\end{equation}
It follows from the first equation of \eqref{solcq2} that
\begin{equation}\label{q0l}
 q _{0l} =\ds\f{1}{\mathcal S_1(c_{0l})}\int_\Omega \left[e^{\alpha m(x)} m(x)- c _{0l} e^{2 \alpha m(x)}\right] dx,
\end{equation}
and plugging \eqref{q0l} into \eqref{sl1}, we get
\begin{equation}\label{ssov}
\mathcal S(l)=\mathcal S_2(c_{0l}).
\end{equation}
Here
\begin{equation*}
\begin{split}
\mathcal S_2(c):=&\int_\Omega \left[e^{\alpha m(x)} m(x)-2 c e^{2 \alpha m(x)}\right] dx\\
&-\ds\f{1}{\mathcal S_1(c)}\int_\Omega \left[e^{\alpha m(x)} m(x)- c e^{2 \alpha m(x)}\right] dx\ds\int_\Omega  \frac{ e^{\alpha m(x)} }{\left(1+ c  e^{\alpha m(x)} \right)^2} dx\ds\\
=&\ds\int_\Omega \left[e^{\alpha m(x)} m(x)- c e^{2 \alpha m(x)}\right] dx\left[1-\ds\f{1}{\mathcal S_1(c)}\ds\int_\Omega  \frac{ e^{\alpha m(x)} }{\left(1+ c  e^{\alpha m(x)} \right)^2} dx\right]-c\int_\Omega  e^{2 \alpha m(x)} dx\\
=&-\ds\f{c\mathcal S'_1(c)}{\mathcal S_1(c)}\ds\int_\Omega \left[e^{\alpha m(x)} m(x)- c e^{2 \alpha m(x)}\right] dx-c\int_\Omega  e^{2 \alpha m(x)} dx,\\
\end{split}
\end{equation*}
where we have used \eqref{deriv} in the last step.
Let
\begin{equation}\label{s3c}
\mathcal S_3(c)=\ds\f{\mathcal S'_1(c)}{\mathcal S_1(c)}\ds\int_\Omega \left[e^{\alpha m(x)} m(x)- c e^{2 \alpha m(x)}\right] dx+\int_\Omega  e^{2 \alpha m(x)} dx,
\end{equation}
and consequently, we have
\begin{equation}\label{sc23}
\mathcal S_2(c)=-c\mathcal S_3(c).
\end{equation}
It follows from the first equation of \eqref{c0l} that
\begin{equation}\label{cc0l}
\ds\f{dc_{0l}}{dl}<0,\;\;\lim_{l\to \tilde l}c_{0l}=\tilde c, \;\;\text{and}\;\; \lim_{l\to \infty}c_{0l}=0,
 \end{equation}
 where $\tilde c$ and $ \tilde l$ are defined in Lemma \ref{l1} (see \eqref{l_*}).
Therefore, to determine the zeros of $\mathcal S(l)$ in $(\tilde l,\infty)$, we only need to consider the zeros of $\mathcal S_3(c)$ in $(0,\tilde c)$.

 It follows from the H\"{o}lder inequality and \eqref{deriv} that
\begin{equation*}
\begin{split}
[\mathcal S_1'(c)]^2&=\left[\int_\Omega  \frac{ e^{\frac{3}{2}\alpha m(x)} e^{\frac{1}{2}\alpha m(x)} }{ \left(1+ c e^{\alpha m(x)} \right)^\frac{3}{2}  \left(1+ c  e^{\alpha m(x)} \right)^\frac{1}{2}} dx\right]^2\\
&\leq \int_\Omega  \frac{ e^{3\alpha m(x)} }{ \left(1+ c e^{\alpha m(x)} \right)^3 } dx\int_\Omega  \frac{ e^{\alpha m(x)} }{ 1+ c  e^{\alpha m(x)} } dx=\frac{\mathcal S_1''(c)\mathcal S_1(c)}{2},
\end{split}
\end{equation*}
and consequently,
\begin{equation}\label{f3g222}
\left[\ds\f{\mathcal S'_1(c)}{\mathcal S_1(c)}\right]'=\frac{\mathcal S''_1(c )\mathcal S_1(c )-\left(\mathcal S'_1(c )\right)^2}{\mathcal S^2_1(c )}>0.
\end{equation}
Note from \eqref{l_*} that $\int_\Omega \left[e^{\alpha m(x)} m(x)- c e^{2 \alpha m(x)}\right] dx>0$ for $c\in(0,\tilde c)$.
This, combined with \eqref{deriv}, \eqref{s3c} and \eqref{f3g222}, yields
\begin{equation}\label{mons3}
\mathcal S'_3(c )=\left[\ds\f{\mathcal S'_1(c)}{\mathcal S_1(c)}\right]'\int_\Omega \left[e^{\alpha m(x)} m(x)- c e^{2 \alpha m(x)}\right] dx-\ds\f{\mathcal S'_1(c)}{\mathcal S_1(c)}\int_\Omega e^{2 \alpha m(x)} dx>0.
\end{equation}
It follows from \eqref{l_*} that $\lim_{c\to\tilde c}\mathcal S_3(c)=\int_\Omega  e^{2 \alpha m(x)} dx>0$, and
\begin{equation*}
\lim_{c\to0} \mathcal S_3(c)=-\frac{\int_\Omega e^{2\alpha m(x)} dx}{\int_\Omega  e^{\alpha m(x)} dx}\mathcal T(\alpha).
\end{equation*}
Therefore, if $\mathcal T(\alpha)<0$, then $\mathcal S_3(c)>0$ for all $c\in(0,\tilde c)$.
If $\mathcal T(\alpha)>0$, then there exists $c_0\in(0,\tilde c)$ such that $\mathcal S_3(c_0)=0$, $\mathcal S_3(c)<0$ for $c\in(0,c_0)$ and $\mathcal S_3(c)>0$ for $c\in(c_0,\tilde c)$. Then, we see from \eqref{ssov}, \eqref{sc23}  and \eqref{cc0l} that
if $\mathcal T(\alpha)<0$, then (i) holds; and if
$\mathcal T(\alpha)>0$,
then there exists  $l_0>\tilde l$ such that
\begin{equation}\label{coadv}
c_{0l_0}=c_0,\;\;\mathcal S(l_0)=0,\;\;\mathcal S(l)<0\;\;\text{for}\;\;l\in(\tilde l,l_0),\;\;\text{and}\;\mathcal S(l)>0\;\;\text{for}\;\;l\in(l_0,\infty).
\end{equation}

It follows from \eqref{ssov} and \eqref{sc23} that
\begin{equation*}
\begin{split}
\mathcal S'(l_0)=&\left.\mathcal S'_2\left(c_{0l}\right)\right|_{l=l_0}\left.\ds\f{dc_{0l}}{dl}\right|_{l=l_0}=-\left[\mathcal S_3\left(c_{0l_0}\right)+c_{0l_0}\left. \mathcal S_3'(c_{0l})\right|_{l=l_0}\right]\left. \ds\f{dc_{0l}}{dl}\right|_{l=l_0}\\
=&-\left[\mathcal S_3\left(c_{0}\right)+c_{0}\mathcal S_3'(c_{0})\right]\left. \ds\f{dc_{0l}}{dl}\right|_{l=l_0},\\
\end{split}
\end{equation*}
where we have used $c_{0l_0}=c_0$ in the last step. Noting that $\mathcal S_3(c_0)=0$, we see from \eqref{cc0l} and \eqref{mons3} that $\mathcal S'(l_0)>0$.
Note from Theorem \ref{thglobal} that $\mathcal L=[ \tilde l+\epsilon,1/\epsilon]$ with $0<\epsilon\ll1$. Then, for sufficiently small $\epsilon$, $l_0\in {int}( \mathcal L)=(\tilde l+\epsilon,1/\epsilon)$ and (ii) holds. This completes the proof.
\end{proof}

By Lemma \ref{suppst}, we can solve \eqref{H} for $\la=0$.
\begin{lemma}\label{l5}
Suppose  that $\la=0$, and let $\mathcal T (\alpha)$ be defined in Lemma \ref{suppst}. Then the following statements hold:
\begin{enumerate}
\item [$\rm{(i)}$] If $\mathcal T (\alpha)<0$, then \eqref{H} has no solution;
\item [$\rm{(ii)}$] If $\mathcal T (\alpha)>0$, then \eqref{H}
 has  a unique solution $$(\delta,s_1,s_2,\nu, w, z,l)=(\delta_0,s_{10},s_{20},\nu_0, w_0, z_0,l_0),$$ where
 $l_0$ is obtained in Lemma \ref{suppst},
\begin{equation*}
\begin{split}
&s_{10}=0,\;w_0= 0,\; z_0= 0,\;\nu_0=\sqrt{ -\frac{  \int_\Omega e^{\alpha m(x)} M_{2}^{(0,l_0)} dx   \int_\Omega M_{3}^{(0,l_0)} dx    }{ | \Omega |  \int_\Omega e^{\alpha m(x)} dx }   },\\
&\delta_0=   \sqrt{ \ds \frac{1 }{1+\left(\f{\int_\Omega M_{3}^{(0,l_0)} dx }{\nu_0|\Omega|} \right)^2}},\;\;s_{20}=-\ds\f{\delta_0}{\nu_0|\Omega|}\ds \int_\Omega  M_{3}^{(0,l_0)} dx,
\end{split}
\end{equation*}
and $M_{i}^{(0,l)}$ is defined in \eqref{Mi0} for $i=1,2,3,4$.
\end{enumerate}
\end{lemma}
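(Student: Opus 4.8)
The plan is to exploit that at $\la=0$ the map $H$ essentially decouples. Setting $\la=0$ in the formulas of Lemma~\ref{lema32}, the equations $h_2=0$ and $h_4=0$ reduce to $Lw=0$ and $\theta\Delta z=0$ in $(Y_1)_{\mathbb C}$; since the kernels of $L$ and $\Delta$ consist of constants, which are excluded from $X_1$, both operators are injective on $(X_1)_{\mathbb C}$, and hence $w=0$, $z=0$. The remaining equations $h_1=0$, $h_3=0$, $h_5=0$ then become a finite-dimensional system for $(\delta,s_1,s_2,\nu,l)\in[0,\infty)\times\mathbb R^2\times[0,\infty)\times\mathcal L$.

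To analyze it I would first record two simplifications coming from \eqref{Mi0} and \eqref{solcq2}: the second identity of \eqref{solcq2} says exactly $\int_\Om M_4^{(0,l)}dx=0$, while \eqref{Mi0} gives $M_2^{(0,l)}<0$ and $M_3^{(0,l)}>0$. Writing $\mathcal S(l)=\int_\Om e^{\alpha m(x)}M_1^{(0,l)}dx$ (as in Lemma~\ref{suppst}), $b_l=\int_\Om e^{\alpha m(x)}M_2^{(0,l)}dx<0$, $c_l=\int_\Om M_3^{(0,l)}dx>0$, and $p=\int_\Om e^{\alpha m(x)}dx>0$, the equations $h_1=0$ and $h_3=0$ become $\mathcal S(l)\delta+b_l(s_1+{\rm i}s_2)-{\rm i}\nu p\delta=0$ and $c_l\delta-{\rm i}\nu|\Om|(s_1+{\rm i}s_2)=0$. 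Splitting real and imaginary parts gives the four real equations
\[
\mathcal S(l)\delta+b_ls_1=0,\quad b_ls_2=\nu p\delta,\quad c_l\delta+\nu|\Om|s_2=0,\quad \nu|\Om|s_1=0,
\]
together with $h_5=0$, i.e.\ $\delta^2+s_1^2+s_2^2=1$.

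Then I would peel off the constraints one at a time. From $\nu|\Om|s_1=0$ there are two cases; if $\nu=0$ then the second and third equations force $s_2=0$ and $\delta=0$, hence $s_1^2=1$ by $h_5$, which contradicts $b_ls_1=0$ (recall $b_l\ne0$); so $\nu>0$, and then $s_1=0$. Feeding $s_1=0$ into the first equation gives $\mathcal S(l)\delta=0$; if $\delta=0$ the remaining equations force $s_2=0$ and $h_5$ fails, so $\delta>0$ and $\mathcal S(l)=0$. Now Lemma~\ref{suppst} settles the dichotomy: if $\mathcal T(\alpha)<0$ then $\mathcal S<0$ on all of $\mathcal L$ and no solution exists, which is (i); if $\mathcal T(\alpha)>0$ then $\mathcal S$ has a unique zero $l_0\in\mathrm{int}(\mathcal L)$, forcing $l=l_0$.

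Finally, with $l=l_0$, $s_1=0$, $\delta>0$, $\nu>0$ fixed, I would solve the remaining three scalar relations. Eliminating $s_2$ between $b_{l_0}s_2=\nu p\delta$ and $c_{l_0}\delta=-\nu|\Om|s_2$ and cancelling $\delta$ yields $\nu^2=-b_{l_0}c_{l_0}/(p|\Om|)$, which is strictly positive precisely because $b_{l_0}<0$ and $c_{l_0}>0$; the positive root is $\nu_0$. Then $s_{20}=-c_{l_0}\delta_0/(\nu_0|\Om|)$, and substituting into $\delta_0^2+s_{20}^2=1$ produces the stated formula for $\delta_0$ (the nonnegativity constraint selects the positive root). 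Uniqueness is automatic, since every step was forced. I do not anticipate a genuine obstacle; the only care needed is in discarding the spurious branches $\nu=0$ and $\delta=0$ against the normalization $h_5=0$, and in tracking the sign information from \eqref{Mi0} that makes $\nu_0$ and $\delta_0$ well defined and positive.
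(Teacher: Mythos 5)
Your proposal is correct and follows essentially the same route as the paper: set $\la=0$ to force $w=z=0$, use $\int_\Omega M_4^{(0,l)}dx=0$ and the signs of $M_2^{(0,l)},M_3^{(0,l)}$ to reduce to the finite-dimensional system, invoke Lemma \ref{suppst} to show solvability is equivalent to $\mathcal S(l)=0$, and then solve explicitly at $l=l_0$. Your case analysis ($\nu=0$, $\delta=0$ ruled out by the normalization $h_5=0$) simply makes explicit what the paper leaves implicit in asserting that solvability forces $\mathcal S(l)=0$.
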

\begin{proof}
It follows from \eqref{Mi0} that
\begin{equation}\label{signs}
\ds\int_\Omega e^{\alpha m(x)}M_{2}^{(0,l_0)}dx<0,\;\;\ds\int_\Omega M_{3}^{(0,l_0)} dx>0,
\end{equation}
which implies that  $\nu_0$ is well defined.

Substituting $\la =0$ into $ h_{2}=0$ and $h_{4}=0$, respectively, we have $ w= w_0= 0$ and $ z= z_0= 0$. Note from the second equation of
\eqref{solcq2} that
\begin{equation}\label{M4}
\int_\Omega  M_{4}^{(0,l)} dx=0 \;\;\text{for any}\;\;l\in\mathcal L.
\end{equation}
Then
plugging $ w= z=0$ and $\lambda=0$ into $ h_i=0$ for $i=1,3,5$, respectively, we have
\begin{equation}\label{c1}
\left( {\begin{array}{*{20}{c}}
\int_\Omega e^{\alpha m(x)} M_{1}^{(0,l)} dx-{\rm i}\nu\int_\Omega  e^{\alpha m(x)}dx &     \int_\Omega e^{\alpha m(x)} M_{2}^{(0,l)} dx\\
\int_\Omega M_3^{(0,l)} dx &  -{\rm i}\nu | \Omega|
\end{array}} \right)\left( {\begin{array}{*{20}{c}}
 \delta\\
s_{1} + {\rm i} s_{2}
\end{array}} \right)= \left( {\begin{array}{*{20}{c}}
0\\
0
\end{array}} \right),
\end{equation}
and
\begin{equation}\label{c1si}
\delta^2 +s_1^2+s_2^2=1.
\end{equation}
Therefore, \eqref{H} has a solution if and only if \eqref{c1}-\eqref{c1si} is solvable for some value of $(\delta,s_1,s_2,\nu,l)$ with
$\delta,\nu\ge0$, $s_1,s_2\in \mathbb R$ and $l\in \mathcal L$.
It follows from \eqref{signs} that \eqref{c1}-\eqref{c1si} is solvable (or \eqref{H} is solvable) if and only if
\begin{equation}\label{Hocond}
\mathcal S(l)=\int_\Omega e^{\alpha m(x)} M_{1}^{(0,l)} dx=0
\end{equation}
is solvable for some $l\in\mathcal L$.
From Lemma \ref{suppst}, we see that if $\mathcal T (\alpha)<0$, then \eqref{Hocond} has no solution in $\mathcal L$; and if $\mathcal T (\alpha)>0$, then \eqref{Hocond} has a unique solution $l_0$ in $\mathcal L$ with $\mathcal S(l_0)=0$.

Substituting $l=l_0$ into \eqref{c1}, we compute that
\begin{equation}\label{nus12}
\begin{split}
 s_1=s_{10}=0,\; \nu=\nu_0,\;s_2=-\ds\f{\delta}{\nu_0|\Omega|}\ds \int_\Omega  M_{3}^{(0,l_0)} dx.\\
\end{split}
\end{equation}
Then, plugging \eqref{nus12} into \eqref{c1si}, we get
$\delta=\delta_0$ and $s_2=s_{20}$.
This completes the proof.
\end{proof}

Now, we solve \eqref{H} for $\la>0$.
\begin{theorem}\label{the1}
Suppose that $\mathcal T (\alpha)>0$, where $\mathcal T (\alpha)$ is defined in Lemma \ref{suppst}.
Then there exists $\tilde\la\in(0,\delta _\epsilon)$, where $\delta _\epsilon$ is obtained in Theorem \ref{thglobal}, and a continuously differentiable mapping
$$\la\mapsto(\delta_{\la},s_{1\la},s_{2\la}, \nu_{\la},w_{\la}, z_\la,l_\la):
[0,\tilde\la]\to\mathbb R^4\times \left(\left(X_{1}\right)_{\mathbb C}\right)^2\times  \mathcal L$$ such that
\eqref{H}
 has a unique solution  $(\delta_{\la},s_{1\la},s_{2\la}, \nu_{\la},w_{\la}, z_\la,l_\la)$ for $\la\in[0,\tilde\la]$, and
\begin{equation*}
(\delta_{\la},s_{1\la},s_{2\la}, \nu_{\la},w_{\la}, z_\la,l_\la)=(\delta_{0},s_{10},s_{20}, \nu_0,w_0, z_0,l_0)
\end{equation*}
 for $\la=0$,
where $(\delta_{0},s_{10},s_{20}, \nu_0,w_0, z_0,l_0)$ is defined in Lemma \ref{l5}.
\end{theorem}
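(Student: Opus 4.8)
The plan is to apply the implicit function theorem to the map $H$ at the point $(\delta_0,s_{10},s_{20},\nu_0,w_0,z_0,l_0,0)$, exactly as was done for $F$ in Lemma~\ref{ll2}. By Lemma~\ref{l5}, we have $H(\delta_0,s_{10},s_{20},\nu_0,w_0,z_0,l_0,0)=0$, so the only thing to verify is that the Fr\'echet derivative of $H$ with respect to the variables $(\delta,s_1,s_2,\nu,w,z,l)$, evaluated at this point and at $\la=0$, is an isomorphism from $\mathbb R^4\times((X_1)_{\mathbb C})^2\times\mathbb R$ onto $(\mathbb C\times(Y_1)_{\mathbb C})^2\times\mathbb R$. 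Once bijectivity of this linear operator is established, the IFT yields the claimed continuously differentiable branch $\la\mapsto(\delta_\la,s_{1\la},s_{2\la},\nu_\la,w_\la,z_\la,l_\la)$ on some interval $[0,\tilde\la]$, unique in a neighborhood, with the prescribed value at $\la=0$; shrinking $\tilde\la$ if necessary keeps $l_\la\in\mathcal L$ since $l_0\in\mathrm{int}(\mathcal L)$ by Lemma~\ref{suppst}(ii).

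The first step is to write down this linearization explicitly. Differentiating $h_2$ and $h_4$ at $\la=0$ kills all the $\la$-multiplied terms, leaving $L\hat w$ and $\theta\Delta\hat z$; since $L$ and $\Delta$ are isomorphisms on $(X_1)_{\mathbb C}\to(Y_1)_{\mathbb C}$, the $h_2$- and $h_4$-components force $\hat w=\hat z=0$ and decouple from the rest. It then remains to check that the residual $5\times5$ real-linear system coming from $(\hat h_1,\hat h_3,\hat h_5)$ in the variables $(\hat\delta,\hat s_1,\hat s_2,\hat\nu,\hat l)$ is nonsingular. Here $\hat h_1\in\mathbb C$ contributes two real equations, $\hat h_3\in\mathbb C$ two more, and $\hat h_5\in\mathbb R$ one, matching five unknowns. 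The coefficients of $\hat\delta,\hat s_1,\hat s_2,\hat\nu$ are read off from the matrix in \eqref{c1} (evaluated at $l=l_0$), while the $\hat l$-column involves $\mathcal S'(l_0)=\frac{d}{dl}\int_\Omega e^{\alpha m(x)}M_1^{(0,l)}dx$ and the $l$-derivative of $\int_\Omega M_3^{(0,l)}dx$; the $h_5$-row is the gradient of the normalization $|\Omega|(\delta^2+s_1^2+s_2^2-1)$, i.e. $2|\Omega|(\delta_0\hat\delta+s_{10}\hat s_1+s_{20}\hat s_2)=2|\Omega|(\delta_0\hat\delta+s_{20}\hat s_2)$ since $s_{10}=0$.

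The main obstacle, and the crux of the argument, is showing this $5\times5$ determinant is nonzero. The key structural fact is that \eqref{c1} is a homogeneous system whose $4\times4$ coefficient matrix becomes singular precisely when $\mathcal S(l)=0$; differentiating the singularity condition, the nondegeneracy at $l_0$ hinges on $\mathcal S'(l_0)\ne0$, which is exactly the transversality conclusion $\mathcal S'(l_0)>0$ furnished by Lemma~\ref{suppst}(ii). Concretely, I would split the $5\times5$ system: the $\hat w,\hat z$ block is already handled; within the remaining block, eliminate $(\hat\delta,\hat s_1,\hat s_2,\hat\nu)$ using the fact that at $l=l_0$ the matrix in \eqref{c1} has a one-dimensional kernel spanned by $(\delta_0,s_{10}+\mathrm{i}s_{20})^T$, and check that the image of the $\hat l$-derivative direction together with the normalization row spans the missing directions — this reduces to a scalar nonvanishing condition that evaluates to a nonzero multiple of $\mathcal S'(l_0)$ times $\nu_0$ (both nonzero). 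An alternative, cleaner route is to observe that the whole construction is the standard Hopf-bifurcation setup: one is locating a simple purely imaginary eigenvalue crossing transversally, so the invertibility of $D_{(\delta,s_1,s_2,\nu,w,z,l)}H$ is equivalent to the algebraic simplicity of the eigenvalue $\mathrm i\la\nu_0$ of $A_{l_0}(0)$ together with the transversality of its crossing, both of which are encoded in $\mathcal S'(l_0)>0$ and the sign conditions \eqref{signs}. I would present the explicit determinant computation, since the paper has been proceeding concretely, and invoke Lemma~\ref{suppst} for the one inequality that makes it work.
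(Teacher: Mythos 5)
Your existence argument is the same as the paper's: apply the implicit function theorem at $(\delta_0,s_{10},s_{20},\nu_0,0,0,l_0,0)$, observe that the $h_2$- and $h_4$-components of the linearization reduce to $L\hat w$ and $\theta\Delta\hat z$ and hence force $\hat w=\hat z=0$, and check that the residual $5\times5$ real system in $(\hat\delta,\hat s_1,\hat s_2,\hat\nu,\hat l)$ is nonsingular. The paper carries out the determinant explicitly and, after substituting the relations \eqref{snu} coming from \eqref{c1}, obtains $4\nu_0^2\delta_0|\Omega|^3\mathcal S'(l_0)(\delta_0^2+s_{20}^2)\int_\Omega e^{\alpha m(x)}dx>0$, which matches the structure you predicted (a positive multiple of $\mathcal S'(l_0)$ and powers of $\nu_0$). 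You would still need to actually perform that computation, but the route is correct.

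The genuine gap is in the uniqueness. Theorem \ref{the1} asserts that $(\delta_\la,s_{1\la},s_{2\la},\nu_\la,w_\la,z_\la,l_\la)$ is the \emph{only} solution of \eqref{H} for each $\la\in[0,\tilde\la]$, where the admissible set in \eqref{H} ranges over \emph{all} $l\in\mathcal L=[\tilde l+\epsilon,1/\epsilon]$, all $\nu\ge0$, and all $(w,z)\in((X_1)_{\mathbb C})^2$ subject only to the normalization $h_5=0$. The implicit function theorem gives uniqueness only in a neighborhood of the base point, which is what your proposal stops at; that is strictly weaker and does not suffice for the later applications (Theorem \ref{the1*} uses this global uniqueness to conclude that $l=l_\la$ is the \emph{only} parameter value in $\mathcal L$ at which $A_l(\la)$ has a purely imaginary eigenvalue, which is the backbone of the stability analysis in Theorem \ref{ed}). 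The paper closes this gap with a compactness argument: for any solution family $(\delta^\la,s_1^\la,s_2^\la,\nu^\la,w^\la,z^\la,l^\la)$ with $\la\in(0,\tilde\la]$, the normalization $h_5=0$ bounds $\delta^\la,s_1^\la,s_2^\la,\|w^\la\|_2,\|z^\la\|_2$, compactness of $\mathcal L$ bounds $l^\la$, and Lemma \ref{l3} bounds $\nu^\la$; then $h_2=0$ and $h_4=0$ express $w^\la=-L^{-1}[\mathcal Q_1(\la)]$ and $z^\la=-\Delta^{-1}[\mathcal Q_2(\la)]$ with right-hand sides of order $\la$, forcing $w^\la,z^\la\to0$ in $(X_1)_{\mathbb C}$; finally any subsequential limit of the scalar components solves the $\la=0$ problem, whose solution is unique by Lemma \ref{l5}, so every solution is eventually trapped in the neighborhood where the implicit function theorem applies. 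Without some version of this a priori estimate and limiting argument, your proof does not establish the stated conclusion.
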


\begin{proof}
We first show the existence.
It follows from Lemma \ref{l5} that
$ H\left(K_0\right)= 0$,
where $K_0=(\delta_{0},s_{10},s_{20}, \nu_0,w_0, z_0,l_0,0)$.
Note from \eqref{M4} that
\begin{equation}\label{suppM}
\ds\int_\Omega M_4^{(0,l)}dx=0,\;\; \ds\f{d}{dl}\left(\ds\int_\Omega M_4^{(0,l)}dx\right)=0\;\;\text{for all}\;\; l\in\mathcal L.
\end{equation}
Then
the Fr\'echet derivative of $H (\delta, s_1,s_2, \nu,w, z,l,\la)$ with respect to $(\delta,s_1,s_2, \nu,w, z, l)$ at $K_0$  is as follows:
\begin{equation*}
K( \hat\delta,\hat s_1,\hat s_2, \hat \nu,\hat w,\hat z,\hat l):=(k_1,k_2,k_3,k_4,k_5)^T,
\end{equation*}
where $(\hat\delta,\hat s_1,\hat s_2,\hat \nu,\hat w,\hat z,\hat l)\in \mathbb R^4\times \left(\left(X_{1}\right)_{\mathbb C}\right)^2\times \mathcal L$, and
\begin{equation*}
\begin{split}
k_1=&\ds \int_\Omega  { e^{\alpha m(x)} \left[ \left(M_{1}^{(0,l_0)}-{\rm i} \nu_0\right)(\hat \delta + \hat w )+ M_{2}^{(0,l_0)} (\hat s_1 + {\rm i}\hat s_2+\hat z)  \right]} dx- {\rm i}\hat \nu \delta_0 \int_\Omega e^{\alpha m(x)} dx\\
&+\hat l \left . \f{d}{dl}\left[ \int_\Omega e^{\alpha m(x)} \left( \delta_0  M_{1}^{(0,l)} +{\rm i}s_{20}  M_{2}^{(0,l)}  \right) dx \right]\right|_{l=l_0},\\
k_{2}=&\ds L \hat w,\\
k_3=& \int_\Omega  {  \left[ M_{3}^{(0,l_0)}(\hat \delta + \hat w )+ M_{4}^{(0,l_0)} \hat z \right]} dx+ \hat\nu s_{20} | \Omega | -{\rm i} \nu_0 (\hat s_1 + {\rm i}\hat s_2)| \Omega | \\
&+\hat l \delta_0  \left .  \f{d}{dl} \left(\int_\Omega M_{3}^{(0,l)} dx  \right)\right|_{l=l_0},\\
k_{4}=&\ds \theta\Delta\hat z ,\\
k_5=& 2 \delta_0| \Omega |\hat \delta+2s_{20}| \Omega |\hat s_2,\\
\end{split}
\end{equation*}
where we have used \eqref{suppM} to obtain $k_3$.
If $ K( \hat \delta,\hat s_1,\hat s_2,\hat \nu,\hat w,\hat z,\hat l)= 0$, then $ \hat w= 0$ and $\hat z= 0$. Substituting $\hat w=\hat z= 0$ into $k_1=0$ and $k_3=0$, respectively, separating the real and imaginary parts, and noting that $k_5=0$, we get
\begin{equation*}
(D_{ij})(\hat \delta, \hat s_1,\hat s_2,\hat \nu,\hat l)^T= (0,0,0,0,0)^T.
\end{equation*}
Here $D_{ij}=0$ except
\begin{equation*}
\begin{split}
&D_{12}= \int_\Omega e^{\alpha m(x)} M_{2}^{(0,l_0)} dx,\;\;D_{15}=  \delta_0  \mathcal S'(l_0),\;\;D_{21}=- \nu_0 \int_\Omega e^{\alpha m(x)} dx, \;\;\;D_{34}=s_{20}| \Omega |,\\
&D_{23}= \int_\Omega e^{\alpha m(x)} M_{2}^{(0,l_0)} dx,\;\;D_{24}=- \delta_0 \int_\Omega e^{\alpha m(x)} dx,\\
&D_{25}= s_{20}  \f{d}{dl} \left .  \left(\int_\Omega  e^{\alpha m(x)} M_{2}^{(0,l)}  dx\right)\right|_{l=l_0},\;\;D_{31}=  \int_\Omega  M_{3}^{(0,l_0)} dx,\;\;D_{33}= \nu_0| \Omega |,\\
&D_{35}=\delta_0 \f{d}{dl} \left .\left(  \int_\Omega M_{3}^{(0,l)} dx \right) \right|_{l=l_0},\;\;D_{42}= -\nu_0 | \Omega |,\;\;D_{51}=2 \delta_0| \Omega |,\;\;D_{53}=2s_{20}| \Omega |,
\end{split}
\end{equation*}
where $\mathcal S(l)$ is defined in Lemma \ref{suppst}. 
A direct computation implies that
\begin{equation}\label{determind}
\begin{split}
\left|\left(D_{ij}\right)\right|=&2\nu_0 | \Omega |^2\delta_0\mathcal S'(l_0) \left|\begin{array}{ccc}
- \nu_0 \int_\Omega e^{\alpha m(x)} dx& \int_\Omega e^{\alpha m(x)} M_{2}^{(0,l_0)} dx&- \delta_0 \int_\Omega e^{\alpha m(x)} dx \\
 \int_\Omega  M_{3}^{(0,l_0)} dx&\nu_0| \Omega |&s_{20}| \Omega |\\
\delta_0&s_{20}&0
\end{array}\right|\\
=&2\nu_0   \delta_0  | \Omega |^2 \mathcal S'(l_0) \bigg (  \delta_0  s_{20} |\Omega| \int_\Omega e^{\alpha m(x)} M_{2}^{(0,l_0)} dx+\delta_0^2  \nu_0  |\Omega| \int_\Omega  e^{\alpha m(x)} dx \\
&+\nu_0 s_{20}^2  |\Omega|  \int_\Omega  e^{\alpha m(x)} dx -\delta_0  s_{20}  \int_\Omega  e^{\alpha m(x)} dx \int_\Omega  M_{3}^{(0,l_0)} dx \bigg).
\end{split}
\end{equation}
Since $(\delta_0,s_{10},s_{20},\nu_0)$ satisfies \eqref{c1}, we have
\begin{equation}\label{snu}
s_{20}  \int_\Omega e^{\alpha m(x)} M_{2}^{(0,l_0)} dx = \nu_0 \delta_0 \int_\Omega  e^{\alpha m(x)} dx,\;\;\delta_0  \int_\Omega M_{3}^{(0,l_0)} dx =-\nu_0 s_{20} |\Omega|.
\end{equation}
Plugging \eqref{snu} into \eqref{determind}, we have
\begin{equation*}
|(D_{ij})|=4\nu_0^2   \delta_0  | \Omega |^3\mathcal S'(l_0)\left(  \delta_0^2+s_{20}^2 \right) \int_\Omega  e^{\alpha m(x)} dx>0,
\end{equation*}
where we have used Lemma \ref{suppst} (ii) in the last step.
Therefore, $\hat \delta=0,\hat  s_1=0,\hat  s_2=0,\hat \nu=0$ and $\hat l=0$, which implies that $ K$ is injective and thus bijective.
From the implicit function theorem, we see that there exists $\tilde\la\in(0,\delta _\epsilon)$
and a continuously differentiable mapping
$$\la\mapsto(\delta_{\la},s_{1\la},s_{2\la},\nu_{\la}, w_{\la}, z_\la,l_\la):
[0,\tilde\la]\to\mathbb R^4\times \left(\left(X_{1}\right)_{\mathbb C}\right)^2\times \mathcal L$$
such that $ H\left(\delta_{\la},s_{1\la},s_{2\la}, \nu_\la,w_\la, z_\la,l_\la,\la\right)= 0$, and  for $\la=0$,
\begin{equation*}
\left(\delta_{\la},s_{1\la},s_{2\la}, \nu_{\la},w_{\la}, z_\la,l_\la \right)=(\delta_0,s_{10},s_{20},\nu_0, w_0, z_0,l_0).
\end{equation*}

Now, we show the uniqueness. From the implicit function theorem, we only need to verify that if $\left(\delta^{\la},s_{1}^{\la},s_{2}^{\la}, \nu^{\la},w^{\la},z^{\la},l^{\la}\right)$ is a solution of \eqref{H}  for $\la\in(0,\tilde\la]$,
then
\begin{equation}\label{limv}
\lim_{\la\to0}\left(\delta^{\la},s_{1}^{\la},s_{2}^{\la},\nu^{\la}, w^{\la},z^{\la},l^{\la}\right)= (\delta_0,s_{10},s_{20}, \nu_0,w_0, z_0,l_0) \;\;\text{in}\;\;\mathbb R^4\times \left((X_1)_{\mathbb C}\right)^2\times \mathbb R.
\end{equation}
Noticing that $h_5\left(\delta^{\la},s_{1}^{\la},s_{2}^{\la},\nu^{\la}, w^{\la},z^{\la},l^{\la}\right)=0$, we see that $|\delta^{\la}|$, $|s_{1}^{\la}|$, $|s_{2}^{\la}|$, $\| w^{\la}\|_2$ and $\| z^{\la}\|_2$ are bounded for $\la\in[0,\tilde\la]$. Since $l^\la\in\mathcal L$, we obtain that $l^{\la}$ is bounded for $\la\in[0,\tilde\la]$. Moreover,
$|\nu^\la|$ is also bounded for $\la\in[0,\tilde\la]$ from Lemma \ref{l3}.
Let
\begin{equation*}
\begin{split}
 \mathcal{Q}_1(\la)=&\la e^{\alpha m(x)}  \left[ M_{1}^{(\la,l^{\la})} (\delta^{\la}+w^{\la}) + M_{2}^{(\la,l^{\la})} (s_1^{\la} + {\rm i} s_2^{\la}+z^{\la})  \right]\\
 &-{\rm i}\la \nu^{\la} e^{\alpha m(x)}  (\delta^{\la}+w^{\la})-\frac{\la }{ | \Omega |} h_1\left(\delta^{\la},s_{1}^{\la},s_{2}^{\la},\nu^{\la}, w^{\la},z^{\la},l^{\la}\right),\\
 \mathcal{Q}_2(\la)=&\la \left[ M_{3}^{(\la,l^{\la})} (\delta^{\la}+w^{\la}) + M_{4}^{(\la,l^{\la})} (s_1^{\la} + {\rm i} s_2^{\la}+z^{\la}) \right]\\
 &-{\rm i}\la \nu^{\la}  (s_1^{\la} + {\rm i} s_2^{\la}+z^{\la})-\frac{\la }{ | \Omega |} h_3\left(\delta^{\la},s_{1}^{\la},s_{2}^{\la},\nu^{\la}, w^{\la},z^{\la},l^{\la}\right).
\end{split}
\end{equation*}
Noting that $\left(\delta^{\la},s_{1}^{\la},s_{2}^{\la}, \nu^{\la},w^{\la},z^{\la},l^{\la}\right)$ is bounded in $\mathbb R^4\times \left((Y_1)_{\mathbb C}\right)^2\times \mathbb R$,
 we see from \eqref{bondM} that $\lim_{\la\to0} \mathcal{Q}_1(\la)=0$ and $\lim_{\la\to0} \mathcal{Q}_2(\la)=0$ in $(Y_1)_{\mathbb C}$.
Since
\begin{equation*}
w^{\la}=-L^{-1}\left[ \mathcal{Q}_1(\la)\right],\;\;z^{\la}=-\Delta^{-1}\left[ \mathcal{Q}_2(\la)\right],
\end{equation*}
 where $L^{-1}$ and $\Delta^{-1}$ are bounded operators from $\left(Y_{1}\right)_{\mathbb C}$ to $\left(X_{1}\right)_{\mathbb C}$, we get
\begin{equation*}
\lim_{\la\to0}   w^{\la}= w_0=0,\;\;\lim_{\la\to0}   z^{\la}= z_0=0\;\;\text{in}\;\;\left(X_1\right)_{\mathbb C}.
\end{equation*}
Since $\left(\delta^{\la},s_{1}^{\la},s_{2}^{\la}, \nu^{\la},l^{\la}\right)$ is bounded in $\mathbb R^5$ for $\la\in(0,\tilde\la]$, we see that, up to a subsequence,
\begin{equation*}
\lim_{\la\to0}   \delta^{\la}= \delta^*, \;\;\lim_{\la\to0}   s^{\la}_1= s_1^*,\;\;\lim_{\la\to0}   s^{\la}_2= s_2^*,\; \;\lim_{\la\to0}  \nu^{\la}= \nu^*,\;\;\lim_{\la\to0}  l^{\la}= l^*.
\end{equation*}
Taking $\la\to0$ on both sides of $$H\left(\delta^{\la},s_{1}^{\la},s_{2}^{\la},\nu^{\la}, w^{\la},z^{\la},l^{\la}\right)=0,$$ we see that $H\left(\delta^{*},s_{1}^{*},s_{2}^{*},\nu^{*}, 0,0,l^{*}\right)=0$.
This combined with Lemma \ref{l5} implies that $\delta^*=\delta_0$,  $s_1^*=s_{10}$, $s_2^*=s_{20}$, $\nu^*=\nu_0$ and $l^*=l_0$. This completes the proof.
\end{proof}

Then from Lemma \ref{lema32} and Theorem \ref{the1}, we have the following result.
\begin{theorem}\label{the1*}
Let $\mathcal L$ be defined in Theorem \ref{thglobal}. Suppose that $\mathcal T (\alpha)>0$ and $\la\in(0,\tilde\la]$, where $0<\tilde\la\ll 1$, and $\mathcal T (\alpha)$ is defined in Lemma \ref{suppst}. Then $(\varphi,\psi,\sigma,l)$ solves
\begin{equation*}
\begin{cases}
\left(A_l(\la)- {\rm i}\sigma I\right)( \varphi, \psi)^T= 0,\\
\sigma\geq0,\;l\in\mathcal L,\;(\varphi, \psi)^T\ne (0,0)^T\in X^2_{\mathbb C},\\
\end{cases}
\end{equation*}
 if and only if
$$ \sigma=\la\nu_\la,\;\varphi=\kappa \varphi_\la =\kappa(\delta_\la+ w_\la),\;\psi=\kappa\psi_\la=\kappa[ (s_{1\la}+{\rm i}s_{2\la})+ z_\la],\;l=l_\la,$$
where $A_l(\la)$ is defined in \eqref{Al}, $I$ is the identity operator, $\kappa\in\mathbb C$ is a nonzero constant, and $(\delta_\la,s_{1\la},s_{2\la}, \nu_{\la},w_{\la}, z_\la, l_\la)$ is obtained in Theorem \ref{the1}.
\end{theorem}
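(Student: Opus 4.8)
The theorem is a dictionary: it re-expresses the abstract solvability result of Theorem~\ref{the1} for the auxiliary system \eqref{H} as a statement about the purely imaginary eigenvalues of the linearized operator $A_l(\la)$. Accordingly, the plan is to run, in both directions, the chain of reductions set up earlier in this section. By the definition of an eigenvalue given just before \eqref{eig1}, the problem $(A_l(\la)-\mathrm{i}\sigma I)(\varphi,\psi)^T=0$ with $(\varphi,\psi)\ne(0,0)$ is precisely \eqref{eig1} with $\mu=\mathrm{i}\sigma$. Since $\mathcal Re(\mathrm{i}\sigma)=0\ge0$, Lemma~\ref{l3} applies, so $\nu:=\sigma/\la$ is nonnegative and bounded for $\la\in(0,\delta_\epsilon]$; substituting $\mu=\mathrm{i}\la\nu$ and multiplying the first equation of \eqref{eig1} by $e^{\alpha m(x)}$ turns \eqref{eig1} into \eqref{v1}. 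Normalizing $(\varphi,\psi)$ by a nonzero scalar into the form \eqref{vps} and invoking Lemma~\ref{lema32}, \eqref{v1} becomes the system $H(\delta,s_1,s_2,\nu,w,z,l,\la)=0$ of \eqref{H}. Finally, Theorem~\ref{the1} (under $\mathcal T(\alpha)>0$) says that for $0<\la\le\tilde\la$ this system has exactly the one solution $(\delta_\la,s_{1\la},s_{2\la},\nu_\la,w_\la,z_\la,l_\la)$. It therefore suffices to carry these equivalences through carefully.

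For the direction ``$\Leftarrow$'', I would substitute $\sigma=\la\nu_\la$, $l=l_\la$, $\varphi=\kappa(\delta_\la+w_\la)$ and $\psi=\kappa[(s_{1\la}+\mathrm{i}s_{2\la})+z_\la]$ with $\kappa\ne0$. By Theorem~\ref{the1}, $(\delta_\la,\dots,l_\la)$ solves \eqref{H}, so Lemma~\ref{lema32} gives that $(\varphi_\la,\psi_\la,\nu_\la,l_\la)$, with $\varphi_\la=\delta_\la+w_\la$ and $\psi_\la=(s_{1\la}+\mathrm{i}s_{2\la})+z_\la$, solves \eqref{v1}; scaling by $\kappa$ and dividing the first equation by $e^{\alpha m(x)}$ (both equations being linear and homogeneous) recovers \eqref{eig1} with $\mu=\mathrm{i}\la\nu_\la$, that is, $(A_{l_\la}(\la)-\mathrm{i}\la\nu_\la I)(\varphi,\psi)^T=0$. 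Moreover $(\varphi,\psi)\ne(0,0)$: the constraint $h_5=0$ in \eqref{H} rules out $\delta_\la=s_{1\la}=s_{2\la}=0$ together with $w_\la=z_\la=0$, hence $(\varphi_\la,\psi_\la)\ne(0,0)$ and $\kappa\ne0$.

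For the direction ``$\Rightarrow$'', let $(\varphi,\psi,\sigma,l)$ solve the eigenproblem with $\sigma\ge0$ and $l\in\mathcal L$. Again $\mathcal Re(\mathrm{i}\sigma)=0$, so Lemma~\ref{l3} makes $\nu:=\sigma/\la\ge0$ well defined and bounded. Choose a nonzero scalar $\kappa_0$ so that $\kappa_0(\varphi,\psi)$ satisfies \eqref{vps}: the modulus $|\kappa_0|$ is fixed by $\|\kappa_0\varphi\|_2^2+\|\kappa_0\psi\|_2^2=|\Omega|$, and $\arg\kappa_0$ is chosen so that $\frac{1}{|\Omega|}\int_\Omega\kappa_0\varphi\,dx$ is real and nonnegative. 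Writing $\kappa_0\varphi=\delta+w$ and $\kappa_0\psi=(s_1+\mathrm{i}s_2)+z$, and passing from \eqref{eig1} with $\mu=\mathrm{i}\la\nu$ to \eqref{v1} as above, Lemma~\ref{lema32} shows $(\delta,s_1,s_2,\nu,w,z,l)$ solves \eqref{H}. The uniqueness clause of Theorem~\ref{the1} then forces this tuple to equal $(\delta_\la,s_{1\la},s_{2\la},\nu_\la,w_\la,z_\la,l_\la)$; hence $\sigma=\la\nu=\la\nu_\la$, $l=l_\la$, and with $\kappa:=\kappa_0^{-1}$ one gets $\varphi=\kappa(\delta_\la+w_\la)=\kappa\varphi_\la$ and $\psi=\kappa\psi_\la$, as asserted.

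Nearly every step is bookkeeping; the one point calling for a short argument is the choice of $\kappa_0$ in ``$\Rightarrow$''. The decomposition \eqref{vps} requires both the $L^2$-normalization and $\delta\ge0$ (real), and the latter can be arranged by rotating $\kappa_0$ only when $\int_\Omega\varphi\,dx\ne0$. If $\int_\Omega\varphi\,dx=0$, then every admissible rescaling has $\delta=0$, producing via Lemma~\ref{lema32} a solution of \eqref{H} whose first component vanishes; but Theorem~\ref{the1} together with $\delta_0>0$ from Lemma~\ref{l5} shows that, for $0<\la\ll1$, the unique solution of \eqref{H} has $\delta=\delta_\la>0$, a contradiction. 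Thus $\int_\Omega\varphi\,dx\ne0$ for any genuine eigenfunction, the form \eqref{vps} is attainable, and the chain of equivalences closes; this also shows directly that no purely imaginary eigenvalue admits an eigenfunction with $\int_\Omega\varphi\,dx=0$.
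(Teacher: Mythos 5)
Your argument is exactly the route the paper intends: the paper states this theorem as an immediate consequence of Lemma \ref{lema32} and Theorem \ref{the1} (with Lemma \ref{l3} supplying the substitution $\mu={\rm i}\la\nu$), and your two-directional chain of reductions is precisely that. Your extra care about the normalization in \eqref{vps} — ruling out $\int_\Omega\varphi\,dx=0$ via the uniqueness of the solution of \eqref{H} with $\delta_\la>0$ for $0<\la\ll1$ — correctly fills in the "ignoring a scalar factor" step that the paper leaves implicit.
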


To show that ${\rm i}\la\nu_\la$ is a simple eigenvalue of $A_{l_\la}(\la)$, we need to consider the following operator
\begin{equation}\label{AlT}
A^H(\la) : = \left( {\begin{array}{cc}
 L&0\\
0&\theta \Delta
\end{array}} \right) + \la\left( {\begin{array}{cc}
e^{\alpha m(x)}M_1^{(\la,l_\la)}+{\rm i}\nu_\la e^{\alpha m(x)}&M_3^{(\la,l_\la)}\\
e^{\alpha m(x)}M_2^{(\la,l_\la)}& M_4^{(\la,l_\la)}+{\rm i}\nu_\la
\end{array}} \right).
\end{equation}
Let
\begin{equation}\label{E}
\mathcal{I} : = \left( {\begin{array}{cc}
 e^{\alpha m(x)}&0\\
0&1
\end{array}} \right).
\end{equation}
Then $A^H(\la)$ is the adjoint operator of $\mathcal{I}\left( A_{l_\la}(\la)- {\rm i}\la \nu_\la I \right) $. That is, for any $(\phi_1,\phi_2)^T\in X_{\mathbb C}$ and $(\psi_1,\psi_2)^T\in X_{\mathbb C}$, we have
\begin{equation}\label{adjoint}
\left\langle A^H(\la)\left(\phi_1,\phi_2\right)^T,\left(\psi_1,\psi_2\right)^T\right\rangle=\left\langle \left(\phi_1,\phi_2\right)^T,\mathcal{I}\left( A_{l_\la}(\la)- {\rm i}\la \nu_\la I \right)\left(\psi_1,\psi_2\right)^T\right\rangle.
\end{equation}

\begin{lemma}\label{ad}
Let $A^H(\la)$ be defined in \eqref{AlT}. Suppose that $\mathcal T (\alpha)>0$ and $\la\in(0,\tilde\la]$, where $0<\tilde\la\ll 1$ and $\mathcal T (\alpha)$ is defined in Lemma \ref{suppst}. Then
$$\mathcal N \left[ A^H(\la) \right] ={\rm span}  [( \tilde\varphi_\la,\tilde \psi_\la)^T ],$$
and, ignoring a scalar factor, $(\tilde\varphi_\la,\tilde \psi_\la)$ can be represented as
\begin{equation}\label{trans-vps}
\begin{cases}
\tilde\varphi_\la =\tilde \delta_\la+\tilde w_\la, \;\;\text{where}\; \tilde \delta\geq0\;\;\text{and}\;\;\tilde w_\la\in \left(X_{1}\right)_{\mathbb C},\\
\tilde \psi_\la  =(\tilde s_{1\la}+{\rm i}\tilde s_{ 2\la}) + \tilde z_\la,\;\;\text{where}\;\; \tilde s_{1\la},\tilde s_{2\la}\in\mathbb R\;\;\text{and}\;\;\tilde z_\la \in \left(X_{1}\right)_{\mathbb C},\\
\| \tilde\varphi_\la\|_2^{2}+\|\tilde \psi_\la \|_2^{2}=| \Omega |.
\end{cases}
\end{equation}
Moreover,
$
\lim_{\la\to0}\left(\tilde \delta_\la,\tilde s_{1\la},\tilde s_{2\la},\tilde w_{\la},\tilde z_\la \right)=(\tilde \delta_0,\tilde s_{10},\tilde s_{20},\tilde w_0,\tilde z_0 )$ in $\mathbb R^3\times \left((X_1)_{\mathbb C}\right)^2$,
where $\tilde w_0= 0,\;\tilde z_0= 0,$
\begin{equation*}
\tilde \delta_0=\sqrt{\frac{1}{1+\left(\frac{   \nu_0   \int_\Omega e^{\alpha m(x)} dx }{ \int_\Omega  M_3^{(0,l_0)} dx}\right)^2 }},
\;\;\tilde s_{10}=0,
\;\;\tilde s_{20}=-\frac{ \tilde \delta_{0}  \nu_0   \int_\Omega e^{\alpha m(x)} dx }{ \int_\Omega  M_3^{(0,l_0)} dx},
\end{equation*}
and $\nu_0,l_0$ are defined in Lemma \ref{l5}.
\end{lemma}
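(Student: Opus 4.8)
The plan is to mirror the structure used in Lemmas~\ref{lema32}--\ref{l5} and Theorem~\ref{the1}, but applied to the adjoint operator $A^H(\la)$. First I would observe that $A^H(\la)$ has $\la$ as a natural small parameter in exactly the same way $A_{l_\la}(\la)$ does, so I will set up an equivalent problem: decompose any element $(\tilde\varphi_\la,\tilde\psi_\la)^T$ of $\mathcal N[A^H(\la)]$ via $\tilde\varphi_\la = \tilde\delta_\la + \tilde w_\la$ and $\tilde\psi_\la = (\tilde s_{1\la}+{\rm i}\tilde s_{2\la}) + \tilde z_\la$ with $\tilde w_\la,\tilde z_\la\in(X_1)_{\mathbb C}$ and the normalization $\|\tilde\varphi_\la\|_2^2 + \|\tilde\psi_\la\|_2^2 = |\Omega|$, exactly as in \eqref{vps}. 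Projecting the equation $A^H(\la)(\tilde\varphi_\la,\tilde\psi_\la)^T = 0$ onto $\mathcal N(L)=\mathcal N(\Delta)=\mathbb R$ and onto $Y_1$ yields a system $\tilde H(\tilde\delta,\tilde s_1,\tilde s_2,\tilde w,\tilde z,\la) = 0$ of the same type as \eqref{H} (note that $\nu_\la$ and $l_\la$ are now \emph{given} by Theorem~\ref{the1}, not unknowns, so the system has one fewer scalar equation and unknown). The $h_2$- and $h_4$-analogues give $\tilde w = -L^{-1}[\,\cdot\,]$, $\tilde z = -\Delta^{-1}[\,\cdot\,]$ with right-hand sides of order $O(\la)$, so at $\la=0$ one forces $\tilde w_0 = \tilde z_0 = 0$.

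Next I would solve the $\la=0$ system. Setting $\tilde w=\tilde z=0$ and $\la=0$, the surviving scalar equations (the $h_1$- and $h_3$-analogues, plus the normalization) become a linear homogeneous $2\times2$ system in $(\tilde\delta,\tilde s_1+{\rm i}\tilde s_2)$ whose coefficient matrix is the conjugate transpose of the matrix in \eqref{c1} evaluated at $l=l_0$, $\nu=\nu_0$; since $\mathcal S(l_0)=\int_\Omega e^{\alpha m(x)}M_1^{(0,l_0)}\,dx = 0$ by Lemma~\ref{suppst}, this matrix is singular with one-dimensional kernel. Solving it gives $\tilde s_{10}=0$ and the relation between $\tilde s_{20}$ and $\tilde\delta_0$ coming from the off-diagonal entries, namely $\tilde s_{20} = -\tilde\delta_0 \nu_0 (\int_\Omega e^{\alpha m(x)}dx)/(\int_\Omega M_3^{(0,l_0)}dx)$ (using $\int_\Omega M_4^{(0,l)}dx = 0$ from \eqref{M4}), and then the normalization $\tilde\delta_0^2 + \tilde s_{20}^2 = 1$ pins down $\tilde\delta_0$ as in the statement. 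This shows $\mathcal N[A^H(0)]$ (in the decomposed coordinates) is exactly spanned by $(\tilde\delta_0,\tilde s_{10},\tilde s_{20},0,0)$.

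Finally I would apply the implicit function theorem to $\tilde H$ at the point $(\tilde\delta_0,\tilde s_{10},\tilde s_{20},0,0,0)$. The Fr\'echet derivative with respect to $(\tilde\delta,\tilde s_1,\tilde s_2,\tilde w,\tilde z)$ again splits: $\tilde w$ and $\tilde z$ are handled by invertibility of $L$ and $\Delta$ on $(Y_1)_{\mathbb C}$, and the remaining $3\times3$ real block (two equations from separating real/imaginary parts of the $h_1$-analogue and one from the normalization $k_5$-analogue $2\tilde\delta_0|\Omega|\hat\delta + 2\tilde s_{20}|\Omega|\hat s_2$; the $h_3$-analogue is redundant because $\mathcal S(l_0)=0$ makes the $2\times2$ $\la=0$ matrix rank one, so I use the normalization to close the system) has a determinant that, after substituting the relations \eqref{snu}-type identities satisfied by $(\tilde\delta_0,\tilde s_{20})$, reduces to a nonzero multiple of $(\tilde\delta_0^2+\tilde s_{20}^2)\int_\Omega e^{\alpha m(x)}dx > 0$; here I do \emph{not} need $\mathcal S'(l_0)$ because $l_\la$ is not a variable. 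Invertibility then gives a unique continuously differentiable branch $\la\mapsto(\tilde\delta_\la,\tilde s_{1\la},\tilde s_{2\la},\tilde w_\la,\tilde z_\la)$ through the $\la=0$ data, and a compactness/uniqueness argument identical to the one in the proof of Theorem~\ref{the1} (bounding the pieces via the normalization, using \eqref{bondM} to kill the $O(\la)$ terms, extracting a convergent subsequence, and invoking the $\la=0$ uniqueness just established) shows this branch exhausts $\mathcal N[A^H(\la)]$ for small $\la$. The main obstacle is the bookkeeping in the Fr\'echet-derivative determinant: one must be careful that because $\mathcal S(l_0)=0$ the two complex scalar equations from $h_1$ and $h_3$ are \emph{not} independent at $\la=0$, so the correct five real conditions are (Re$k_1$, Im$k_1$, Re$k_3$ or Im$k_3$ — only one is independent, $k_5$), and it is the normalization equation that supplies the missing rank; verifying nonsingularity then uses precisely the relations $\tilde s_{20}\int_\Omega e^{\alpha m(x)}M_2^{(0,l_0)}dx$-type identities inherited from the $\la=0$ kernel equation, exactly paralleling \eqref{snu}.
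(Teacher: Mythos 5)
There is a genuine gap in the existence half of your argument. After you fix $\nu_\la$ and $l_\la$ (which are given by Theorem~\ref{the1}, as you note), the projected system for the adjoint kernel has only three real scalar unknowns $(\tilde\delta,\tilde s_1,\tilde s_2)$ once $\tilde w,\tilde z$ are eliminated, but it still carries \emph{five} real scalar equations (real and imaginary parts of the $h_1$- and $h_3$-analogues, plus the normalization); it is overdetermined by two, not ``one fewer equation and unknown.'' Your observation that the complex $2\times2$ matrix is rank one because $\mathcal S(l_0)=0$ resolves the redundancy only at $\la=0$. For $\la>0$ the implicit function theorem applied to a chosen square $3\times3$ subsystem produces a branch solving those three equations, but nothing in your construction guarantees that the two discarded equations are also satisfied along that branch --- and they are satisfied precisely when $0$ is an eigenvalue of $A^H(\la)$, which is the very fact you are trying to establish. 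So your IFT step proves neither that $\mathcal N[A^H(\la)]$ is nontrivial for $\la>0$ nor that it is one-dimensional.

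The paper closes this gap before any computation: by Theorem~\ref{the1*}, $0$ is an eigenvalue of $\mathcal{I}\left(A_{l_\la}(\la)-{\rm i}\la\nu_\la I\right)$ with one-dimensional kernel, and since $A^H(\la)$ is the adjoint of this (index-zero, elliptic) operator via \eqref{adjoint}, its kernel is automatically nontrivial and one-dimensional as well. Existence, uniqueness up to scalar, and the representation \eqref{trans-vps} all come for free from this duality; no implicit function theorem is needed for the adjoint problem at all. The remaining content is only the $\la\to0$ limit, and there your argument is essentially the paper's: bound $(\tilde\delta_\la,\tilde s_{1\la},\tilde s_{2\la},\tilde w_\la,\tilde z_\la)$ via the normalization, use $L^{-1}$ and $\Delta^{-1}$ to force $\tilde w_\la,\tilde z_\la\to0$, extract a convergent subsequence, and identify the limit as the unique normalized solution of the conjugate-transposed version of \eqref{c1} at $(l_0,\nu_0)$, which yields $\tilde\delta_0$, $\tilde s_{10}=0$, $\tilde s_{20}$ as stated. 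I would recommend replacing your IFT construction with the adjoint/Fredholm argument and keeping only your limit computation.
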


\begin{proof}
It follows from Theorem \ref{the1*} that $0$ is an eigenvalue of $ \mathcal{I}\left( A_{l_\la}(\la)- {\rm i}\la \nu_\la I \right)$ and $\mathcal N \left[\mathcal{I}\left( A_{l_\la}(\la)- {\rm i}\la \nu_\la I \right) \right]$ is one-dimensional
for $\la\in(0,\tilde\la]$, where $\mathcal I$ is defined in \eqref{E}. Then
$0$ is also an eigenvalue of $ A^H(\la)$ and $\mathcal N [ A^H(\la)]$ is also one-dimensional. Then  $\mathcal N [ A^H(\la)]={\rm span}  [(\tilde\varphi_\la,\tilde \psi_\la)^T]$, and $( \tilde\varphi_\la,\tilde \psi_\la)$ can be represented as \eqref{trans-vps}.
By the similar arguments as in the proof of Theorem \ref{the1}, we see that
\begin{equation*}
\lim_{\la\to0}\tilde w_\la=0,\;\;\lim_{\la\to0}\tilde z_\la=0\;\;\text{in}\;\;\left(X_1\right)_{\mathbb C},
\end{equation*}
and up to a subsequence,
\begin{equation*}
\lim_{\la\to0}   \tilde \delta_{\la}=\delta_0^*,\;\;\lim_{\la\to0}   \tilde s_{1\la}= s_{10}^*,\;\;\lim_{\la\to0}   \tilde s_{2\la}=s_{20}^*,
 \end{equation*}
where $\left( \delta_0^{*}, s_{10}^*, s_{20}^*\right)$ satisfies $\left(\delta_0^*\right)^2+\left( s_{ 10}^*\right)^2+\left( s_{ 20}^*\right)^2=1$, and
\begin{equation*}
\left( {\begin{array}{*{20}{c}}
 \ds {\rm i}\nu_0  \int_\Omega e^{\alpha m(x)} dx&    \ds \int_\Omega  M_3^{(0,l_0)} dx\\
\ds \int_\Omega e^{\alpha m(x)} M_2^{(0,l_0)} dx &  {\rm i}\nu_0 | \Omega|
\end{array}} \right)\left( {\begin{array}{*{20}{c}}
\delta_{0}^*\\
 s_{10}^* + {\rm i}  s_{20}^*
\end{array}} \right)= \left( {\begin{array}{*{20}{c}}
0\\
0
\end{array}} \right).
\end{equation*}
A direct computation implies that
$\left( \delta_0^{*}, s_{10}^*, s_{20}^*\right)=\left(\tilde \delta_0, \tilde s_{10}, \tilde s_{20}\right)$,
and consequently, $\lim_{\la\to0}\left(\tilde \delta_\la,\tilde s_{1\la},\tilde s_{2\la}\right)=\left(\tilde \delta_0, \tilde s_{10}, \tilde s_{20}\right)$.
This completes the proof.
\end{proof}

Then, by virtue of Lemma \ref{ad}, we show that ${\rm i} \la \nu_\la$ is simple.

\begin{theorem}\label{sim}
Suppose that $\mathcal T (\alpha)>0$ and $\la\in(0,\tilde\la]$, where $0<\tilde\la\ll 1$ and $\mathcal T (\alpha)$ is defined in Lemma \ref{suppst}. Then ${\rm i} \la \nu_\la$ is a simple eigenvalue of $ A_{l_\la}(\la) $, where $ A_l(\la) $ is defined in \eqref{Al}.
\end{theorem}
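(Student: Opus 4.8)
The plan is to show that $\mathrm{i}\la\nu_\la$ is simple by the standard Jordan-chain argument: we must verify that the generalized eigenspace does not grow past the first level, i.e.\ that $(\varphi_\la,\psi_\la)^T \notin \mathcal R\bigl(A_{l_\la}(\la) - \mathrm{i}\la\nu_\la I\bigr)$, where $(\varphi_\la,\psi_\la)$ spans $\mathcal N\bigl(A_{l_\la}(\la) - \mathrm{i}\la\nu_\la I\bigr)$ (this kernel being one-dimensional by Theorem~\ref{the1*}). Since $\mathcal I$ (defined in \eqref{E}) is invertible, it is equivalent to show that $(\varphi_\la,\psi_\la)^T \notin \mathcal R\bigl(\mathcal I(A_{l_\la}(\la) - \mathrm{i}\la\nu_\la I)\bigr)$. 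By the Fredholm alternative applied to the operator $\mathcal I(A_{l_\la}(\la) - \mathrm{i}\la\nu_\la I)$, whose adjoint is $A^H(\la)$ (see \eqref{adjoint}), this range is exactly the orthogonal complement of $\mathcal N[A^H(\la)] = \mathrm{span}[(\tilde\varphi_\la,\tilde\psi_\la)^T]$ from Lemma~\ref{ad}. Hence simplicity is equivalent to the non-degeneracy condition
\begin{equation*}
\left\langle (\tilde\varphi_\la,\tilde\psi_\la)^T, \mathcal I (\varphi_\la,\psi_\la)^T \right\rangle = \int_\Omega e^{\alpha m(x)}\overline{\tilde\varphi_\la}\,\varphi_\la \, dx + \int_\Omega \overline{\tilde\psi_\la}\,\psi_\la \, dx \ne 0.
\end{equation*}

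The main work is therefore to evaluate this inner product, and the natural device is a perturbation/limit argument in $\la$, exactly parallel to the proofs of Theorem~\ref{the1} and Lemma~\ref{ad}. First I would substitute the decompositions $\varphi_\la = \delta_\la + w_\la$, $\psi_\la = (s_{1\la}+\mathrm{i}s_{2\la}) + z_\la$ from \eqref{vps} and $\tilde\varphi_\la = \tilde\delta_\la + \tilde w_\la$, $\tilde\psi_\la = (\tilde s_{1\la}+\mathrm{i}\tilde s_{2\la}) + \tilde z_\la$ from \eqref{trans-vps}, and use that $w_\la, z_\la, \tilde w_\la, \tilde z_\la \to 0$ in $(X_1)_{\mathbb C}$ as $\la \to 0$ (Theorem~\ref{the1}, Lemma~\ref{ad}) together with the orthogonality of constants to $X_1$. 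This reduces the $\la \to 0$ limit of the inner product to
\begin{equation*}
\tilde\delta_0 \delta_0 \int_\Omega e^{\alpha m(x)}\,dx + (\tilde s_{10}-\mathrm{i}\tilde s_{20})(s_{10}+\mathrm{i}s_{20})|\Omega|.
\end{equation*}
Plugging in the explicit values $s_{10}=\tilde s_{10}=0$, $s_{20} = -\dfrac{\delta_0}{\nu_0|\Omega|}\displaystyle\int_\Omega M_3^{(0,l_0)}\,dx$ (Lemma~\ref{l5}) and $\tilde s_{20} = -\dfrac{\tilde\delta_0\nu_0}{\int_\Omega M_3^{(0,l_0)}\,dx}\displaystyle\int_\Omega e^{\alpha m(x)}\,dx$ (Lemma~\ref{ad}), the second term becomes $\tilde s_{20}s_{20}|\Omega| = \tilde\delta_0\delta_0\int_\Omega e^{\alpha m(x)}\,dx$, so the limit equals $2\tilde\delta_0\delta_0\int_\Omega e^{\alpha m(x)}\,dx$, which is strictly positive because $\delta_0 > 0$ and $\tilde\delta_0 > 0$ (both are explicit positive square roots). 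By continuity in $\la$, the inner product stays nonzero for $\la \in (0,\tilde\la]$ after possibly shrinking $\tilde\la$.

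The step I expect to require the most care is the rigorous reduction of $\bigl\langle (\tilde\varphi_\la,\tilde\psi_\la)^T, \mathcal I(\varphi_\la,\psi_\la)^T\bigr\rangle$ to its $\la\to 0$ limit, since one must control cross terms such as $\int_\Omega e^{\alpha m(x)}\overline{\tilde w_\la}\,\delta_\la\,dx$ and $\int_\Omega \overline{\tilde z_\la}(s_{1\la}+\mathrm{i}s_{2\la})\,dx$: the constants $\delta_\la$, $s_{1\la}$, $s_{2\la}$ are merely bounded (not obviously convergent along the full sequence without passing to subsequences), so one should argue that \emph{every} convergent subsequence has the same limit, forcing convergence of the whole family, and that the $X_1$-components vanish in the limit fast enough to kill the cross terms. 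Once this bookkeeping is done, the conclusion that $\mathrm{i}\la\nu_\la$ is simple follows immediately from the Fredholm characterization of the range. I would also note that geometric simplicity of the eigenvalue (one-dimensional kernel) is already given by Theorem~\ref{the1*}, so only the algebraic part — the triviality of the Jordan block — remains, which is precisely what the nonvanishing inner product provides.
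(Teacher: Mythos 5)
Your proposal is correct and follows essentially the same route as the paper: the paper also reduces simplicity to the non-vanishing of $\mathcal W(\la)=\bigl\langle (\tilde\varphi_\la,\tilde\psi_\la)^T,\mathcal I(\varphi_\la,\psi_\la)^T\bigr\rangle$ via the adjoint operator $A^H(\la)$ and Lemma \ref{ad} (phrased there as showing $\mathcal N[A_{l_\la}(\la)-{\rm i}\la\nu_\la I]^2=\mathcal N[A_{l_\la}(\la)-{\rm i}\la\nu_\la I]$, i.e.\ the coefficient $s$ of a putative Jordan chain must vanish), and computes the same limit $2\tilde\delta_0\delta_0\int_\Omega e^{\alpha m(x)}dx>0$. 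One small wording slip: the condition equivalent to simplicity is $\mathcal I(\varphi_\la,\psi_\la)^T\notin\mathcal R\bigl(\mathcal I(A_{l_\la}(\la)-{\rm i}\la\nu_\la I)\bigr)$ rather than $(\varphi_\la,\psi_\la)^T\notin\mathcal R\bigl(\mathcal I(A_{l_\la}(\la)-{\rm i}\la\nu_\la I)\bigr)$, but the inner-product criterion you actually verify (with $\mathcal I$ inside) is the correct one, so the argument is unaffected.
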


\begin{proof}
It follows from Theorem \ref{the1*} that $\mathcal N [ A_{l_\la}(\la) -  {\rm i}\la \nu_\la I] ={\rm span}  [( \varphi_\la,  \psi_\la)^T]$, where $ \varphi_\la$ and $\psi_\la$ are defined in Theorem \ref{the1}. Then we show that
\begin{equation*}
\mathcal N [ A_{l_\la}(\la) -  {\rm i}\la \nu_\la I]^2 =\mathcal N [ A_{l_\la}(\la) -  {\rm i}\la \nu_\la I].
\end{equation*}
Letting $( \Psi_1,  \Psi_2)^T \in \mathcal N [ A_{l_\la}(\la) -  {\rm i}\la \nu_\la I]^2$, we have
\begin{equation*}
 [ A_{l_\la}(\la) -  {\rm i}\la \nu_\la I] ( \Psi_1,  \Psi_2)^T  \in \mathcal N [ A_{l_\la}(\la) -  {\rm i}\la \nu_\la I]={\rm span}  [( \varphi_\la,  \psi_\la)^T],
\end{equation*}
and consequently, there exists a constant $s\in\mathbb C$ such that
\begin{equation*}
 [ A_{l_\la}(\la) -  {\rm i}\la \nu_\la I] ( \Psi_1,  \Psi_2)^T  =s ( \varphi_\la,  \psi_\la)^T.
\end{equation*}
Then
\begin{equation}\label{Psi2}
\mathcal I[ A_{l_\la}(\la) -  {\rm i}\la \nu_\la I] ( \Psi_1,  \Psi_2)^T  =s\mathcal I( \varphi_\la,  \psi_\la)^T,
\end{equation}
where $\mathcal I$ is defined in \eqref{E}. Note that $A^H(\la)$ is the adjoint operator of  $\mathcal I[ A_{l_\la}(\la) -  {\rm i}\la \nu_\la I]$, and it follows from Lemma \ref{ad} that
$$\mathcal N \left[ A^H(\la) \right] ={\rm span}  [( \tilde\varphi_\la,\tilde \psi_\la)^T ].$$
Then, by \eqref{adjoint} and \eqref{Psi2}, we have
\begin{equation*}
\begin{split}
0&=\left\langle  A^H(\la)(\tilde\varphi_\la, \tilde \psi_\la)^T ,( \Psi_1,  \Psi_2)^T  \right\rangle=\left\langle (\tilde\varphi_\la,\tilde \psi_\la)^T, \mathcal{I}\left( A_{l_\la}(\la)- {\rm i}\la \nu_\la I \right) ( \Psi_1,  \Psi_2)^T    \right\rangle \\
&=s \mathcal W(\la),
\end{split}
\end{equation*}
where
\begin{equation}\label{Mla}
\mathcal W(\la):=\left\langle (\tilde\varphi_\la,\tilde \psi_\la)^T, \mathcal I( \varphi_\la,  \psi_\la)^T\right\rangle =\int_\Omega \left( e^{\alpha m(x)}\overline{\tilde\varphi_\la } \varphi_\la +  \overline{ \tilde\psi_\la } \psi_\la \right) dx.
\end{equation}
It follows from Lemmas \ref{l5} and \ref{ad} and Theorem \ref{the1} that
\begin{equation}\label{ptilde}
\lim_{\la\to 0}\mathcal W(\la)=2\tilde \delta_0 \delta_0 \int_\Omega e^{\alpha m(x)} dx>0,
\end{equation}
which implies that $s=0$ for $\la\in(0,\tilde\la]$ with $0<\tilde\la\ll1$. Therefore,
\begin{equation*}
\mathcal N [ A_{l_\la}(\la) -  {\rm i}\la \nu_\la I]^2 =\mathcal N [ A_{l_\la}(\la) -  {\rm i}\la \nu_\la I].
\end{equation*}
This completes the proof.
\end{proof}

Note that ${\rm i}\la\nu_\la$ is simple if exists. Then, by using the implicit function
theorem, we see that there exists a neighborhood $P_\la\times V_\la \times O_\la$ of $ ( \varphi_\la, \psi_\la, {\rm i}\nu_\la,l_\la)$ ($P_\la$, $V_\la$ and $O_\la$ are neighborhoods of $( \varphi_\la, \psi_\la)$, ${\rm i}\nu_\la$ and $l_\la$, respectively), and a continuously differentiable function  $ ( \varphi(l), \psi(l), \mu(l)):O_\la\to P_\la\times V_\la$ such that $\mu(l_\la)={\rm i}\nu_\la,\;( \varphi(l_\la), \psi(l_\la))=( \varphi_\la, \psi_\la)$, where $\nu_\la$, $\varphi_\la$ and $\psi_\la$ are defined in Theorem \ref{the1}. Moreover, for each $l \in O_\la$, the only eigenvalue of $A_{l}(\la)$ in $V_\la$ is $\mu(l)$, and
\begin{equation}\label{tra}
\left(A_{l}(\la)- \mu(l) I\right)( \varphi(l), \psi(l))^T= 0.
\end{equation}
Then, we show that the following transversality condition holds.

\begin{theorem}\label{the2}
Let $l_\la$ be obtained in Theorem \ref{the1}. Then
\begin{equation*}
 \left.\frac{ d \mathcal Re [\mu(l)]}{ d l}\right|_{l=l_\la} > 0.
\end{equation*}
\end{theorem}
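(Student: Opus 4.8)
The plan is to differentiate the eigenvalue relation \eqref{tra} in $l$ at $l=l_\la$, eliminate the unknown $\bigl(\varphi'(l_\la),\psi'(l_\la)\bigr)$ by pairing against the null vector $(\tilde\varphi_\la,\tilde\psi_\la)^T$ of the adjoint operator $A^H(\la)$ (Lemma \ref{ad}), obtain an explicit formula for $\mu'(l_\la)$, and then let $\la\to0$; in that limit the relevant quantity reduces to a positive multiple of $\mathcal S'(l_0)$, which is positive by Lemma \ref{suppst}(ii). Note throughout that $\frac{d}{dl}\mathcal Re[\mu(l)]=\mathcal Re[\mu'(l)]$ since $\mu(\cdot)$ is a $C^1$ complex-valued function of the real variable $l$.

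Write $\Phi(l):=(\varphi(l),\psi(l))^T$ and $\Phi_\la:=(\varphi_\la,\psi_\la)^T$. By \eqref{Al}--\eqref{Mi}, $A_l(\la)$ depends on $l$ only through the lower-order, $\la$-scaled coefficients $M_i^{(\la,l)}$, so $\partial_l A_l(\la)=\la\,N^{(\la,l)}$ is a \emph{bounded} operator, where $N^{(\la,l)}$ has entries $\partial_l M_i^{(\la,l)}$; by Theorem \ref{thglobal} the map $(\la,l)\mapsto\bigl(u^{(\la,l)},v^{(\la,l)}\bigr)$ is $C^1$ on $[0,\tilde\la]\times\mathcal L$, so each $\partial_l M_i^{(\la,l)}$ is well defined and continuous there, and $N^{(\la,l_\la)}\to N^{(0,l_0)}$ uniformly as $\la\to0$ (using also $l_\la\to l_0$ and $X\hookrightarrow C(\overline\Omega)$ for $N\le3$). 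Differentiating $\bigl(A_l(\la)-\mu(l)I\bigr)\Phi(l)=0$ at $l=l_\la$, where $\mu(l_\la)={\rm i}\la\nu_\la$ is the simple eigenvalue of $A_{l_\la}(\la)$ (Theorem \ref{sim}), gives
\[
\la\,N^{(\la,l_\la)}\Phi_\la+\bigl(A_{l_\la}(\la)-{\rm i}\la\nu_\la I\bigr)\Phi'(l_\la)=\mu'(l_\la)\,\Phi_\la .
\]
Applying $\mathcal I$ from \eqref{E} and taking the inner product with $(\tilde\varphi_\la,\tilde\psi_\la)^T$, the middle term drops out by the adjoint identity \eqref{adjoint} together with $(\tilde\varphi_\la,\tilde\psi_\la)^T\in\mathcal N[A^H(\la)]$, so
\[
\mu'(l_\la)=\la\,\frac{\bigl\langle(\tilde\varphi_\la,\tilde\psi_\la)^T,\ \mathcal I\,N^{(\la,l_\la)}\Phi_\la\bigr\rangle}{\mathcal W(\la)},
\]
with $\mathcal W(\la)$ as in \eqref{Mla}; by \eqref{ptilde}, $\mathcal W(\la)\to 2\tilde\delta_0\delta_0\int_\Omega e^{\alpha m(x)}\,dx>0$, so $\mathcal W(\la)\neq0$ for small $\la$ and $\mu'(l_\la)/\la$ has a limit as $\la\to0$.

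To evaluate that limit, recall from Theorem \ref{the1} that $\varphi_\la\to\delta_0$ and $\psi_\la\to{\rm i}s_{20}$ in $X_{\mathbb C}$, from Lemma \ref{ad} that $\tilde\varphi_\la\to\tilde\delta_0$ and $\tilde\psi_\la\to{\rm i}\tilde s_{20}$ in $X_{\mathbb C}$, and that $\delta_0,\tilde\delta_0>0$ while $s_{10}=\tilde s_{10}=0$; all these convergences are uniform ($X\hookrightarrow C(\overline\Omega)$), so one may pass to the limit in the integral. Exploiting the real/imaginary structure of the limiting vectors, the contributions of $M_2^{(0,l_0)}$ and $M_3^{(0,l_0)}$ are purely imaginary, and the real part of the numerator equals $\tilde\delta_0\delta_0\int_\Omega e^{\alpha m(x)}\partial_l M_1^{(0,l_0)}\,dx+\tilde s_{20}s_{20}\int_\Omega\partial_l M_4^{(0,l_0)}\,dx$. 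Since $e^{\alpha m(x)}$ is $l$-independent, $\int_\Omega e^{\alpha m(x)}\partial_l M_1^{(0,l_0)}\,dx=\mathcal S'(l_0)$, and by \eqref{suppM}, $\int_\Omega\partial_l M_4^{(0,l_0)}\,dx=\frac{d}{dl}\bigl(\int_\Omega M_4^{(0,l)}\,dx\bigr)\big|_{l_0}=0$. Therefore
\[
\lim_{\la\to0}\frac1\la\,\frac{d\,\mathcal Re[\mu(l)]}{dl}\Big|_{l=l_\la}
=\frac{\tilde\delta_0\delta_0\,\mathcal S'(l_0)}{2\tilde\delta_0\delta_0\int_\Omega e^{\alpha m(x)}\,dx}
=\frac{\mathcal S'(l_0)}{2\int_\Omega e^{\alpha m(x)}\,dx}>0
\]
by Lemma \ref{suppst}(ii), and hence $\frac{d\,\mathcal Re[\mu(l)]}{dl}\big|_{l=l_\la}>0$ for all sufficiently small $\la>0$ (shrinking $\tilde\la$ if necessary), which is the assertion.

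The main obstacle is this last step: one must verify carefully that, in the $\la\to0$ limit, the off-diagonal coefficients $M_2^{(0,l_0)},M_3^{(0,l_0)}$ feed only the imaginary part and that $M_4^{(0,l_0)}$ contributes nothing to the real part, so that the real part collapses exactly to $\tilde\delta_0\delta_0\,\mathcal S'(l_0)$ up to the positive normalization $\int_\Omega e^{\alpha m(x)}\,dx$. This is where the special form of the limiting (adjoint) eigenfunctions in Lemmas \ref{l5} and \ref{ad}---with $\delta_0,\tilde\delta_0$ real and the $v$-components purely imaginary---together with the identity \eqref{suppM} are decisive; one also has to keep careful track of which quantities are real throughout, and check that $\lim_{\la\to0}\mu'(l_\la)/\la$ genuinely exists, which follows from convergence of the numerator and $\mathcal W(\la)\to 2\tilde\delta_0\delta_0\int_\Omega e^{\alpha m(x)}\,dx\neq0$.
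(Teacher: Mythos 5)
Your proposal is correct and follows essentially the same route as the paper: differentiate the eigenvalue relation \eqref{tra} at $l=l_\la$, kill the unknown derivative of the eigenfunction by pairing with the adjoint null vector from Lemma \ref{ad} via \eqref{adjoint}, and pass to the limit $\la\to0$ using \eqref{ptilde}, the convergence of the eigenfunctions, \eqref{suppM}, and $\mathcal S'(l_0)>0$ to get $\lim_{\la\to0}\frac1\la\frac{d\,\mathcal Re[\mu]}{dl}\big|_{l=l_\la}=\frac{\mathcal S'(l_0)}{2\int_\Omega e^{\alpha m(x)}dx}>0$. Your bookkeeping (the explicit $\la N^{(\la,l)}$ factor and $\mu(l_\la)={\rm i}\la\nu_\la$) is in fact slightly more careful than the paper's displayed \eqref{tra1}, but the argument is the same.
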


\begin{proof}
Multiplying both sides of \eqref{tra} by $\mathcal I$ to the left, and differentiating the result with respect to $l$ at $l=l_\la$, we have
\begin{equation}\label{tra1}
\begin{split}
\left.\frac{ d \mu}{ d l}\right|_{l=l_\la}  \mathcal{I} ( \varphi_\la, \psi_\la)^T= & \mathcal{I} \left(A_{l_\la}(\la)- {\rm i}\nu_\la I\right)\left.\left(\frac{d \varphi }{ d l}, \frac{d \psi }{ d l}  \right)^T\right|_{l=l_\la}\\
&+\mathcal I\left.\left( {\begin{array}{cc}
\ds\frac{dM^{(\la,l)}_1}{d l}&\ds\frac{dM^{(\la,l)}_2}{d l}\\
\ds\frac{dM^{(\la,l)}_3}{d l}&\ds\frac{dM^{(\la,l)}_4}{d l}
\end{array}} \right)\right|_{l=l_\la} (\varphi_\la, \psi_\la)^T,
\end{split}
\end{equation}
where $M^{(\la,l)}_i\;(i=1,\ldots,4)$ and $\mathcal I$ are defined in \eqref{Mi} and \eqref{E}, respectively.
Note from \eqref{adjoint} and Lemma \ref{ad} that
\begin{equation*}
\begin{split}
&\left\langle(\tilde\varphi_\la, \tilde \psi_\la)^T , \mathcal{I} \left(A_{l_\la}(\la)- {\rm i}\nu_\la I\right)\left.\left(\frac{d \varphi }{ d l}, \frac{d \psi }{ d l}  \right)^T\right|_{l=l_\la}\right\rangle\\
&=\left\langle A^H(\la)(\tilde\varphi_\la, \tilde \psi_\la)^T, \left.\left(\frac{d \varphi }{ d l}, \frac{d \psi }{ d l}  \right)^T\right|_{l=l_\la} \right\rangle=0,
\end{split}
\end{equation*}
where $\tilde\varphi_\la$ and $\tilde \psi_\la$ are defined in Lemma \ref{ad}.
Then, multiplying both sides of \eqref{tra1} by $(\tilde\varphi_\la, \tilde \psi_\la )$ to the left, and integrating the result over $\Omega$, we
have
\begin{equation}\label{ptilde2}
\begin{split}
\mathcal W(\la)\left.\frac{ d \mu}{ d l}\right|_{l=l_\la}
=&\la\int_\Omega e^{\alpha m(x)} \overline{\tilde\varphi_\la}\varphi_\la \left .  \frac{d M_{1}^{(\la,l)} }{d l} \right|_{l=l_\la} dx+ \la \int_\Omega \overline {\tilde \psi_\la}\varphi_\la\left .  \frac{d M_{3}^{(\la,l)} }{d l} \right|_{l=l_\la} dx\\
& +\la\int_\Omega e^{\alpha m(x)}  \overline{\tilde \varphi_\la}\psi_\la \left .  \frac{d M_{2}^{(\la,l)} }{d l} \right|_{l=l_\la} dx+\la \int_\Omega \overline{\tilde\psi_\la}\psi_\la \left .  \frac{d M_{4}^{(\la,l)} }{d l} \right|_{l=l_\la} dx,
\end{split}
\end{equation}
where $\mathcal W(\la)$ is defined in \eqref{Mla}.
From Theorem \ref{the1} and Lemma \ref{ad}, we see  that
\begin{equation}\label{psi00}
\lim_{\la\to0}\left(l_\la,\varphi_\la,\psi_\la,\tilde \varphi_\la,\tilde\psi_\la \right)= \left(l_0,\delta_0,{\rm i}s_{20},\tilde \delta_0,{\rm i}\tilde s_{20}\right) \;\;\text{in}\;\; \mathbb{R}\times X_{\mathbb C}^4.
\end{equation}
It follows from Theorem \ref{thglobal} that
$( u^{(\la,l)}, v^{(\la,l)})$ is continuously differentiable. This, combined with the embedding theorems and Eq.  \eqref{psi00}, implies that
\begin{equation}\label{limmulti}
\begin{split}
&\lim_{\la\to0}\int_\Omega e^{\alpha m(x)} \overline{\tilde\varphi_\la}\varphi_\la \left .  \frac{d M_{1}^{(\la,l)} }{d l} \right|_{l=l_\la} dx=\delta_0\tilde \delta_0\left. \ds\f{d}{dl}\left(\int_\Omega e^{\alpha m(x)} M_{1}^{(0,l)}   dx\right)\right|_{l=l_0},\\
&\lim_{\la\to0} \int_\Omega e^{\alpha m(x)}  \overline{\tilde \varphi_\la}\psi_\la \left .  \frac{d M_{2}^{(\la,l)} }{d l} \right|_{l=l_\la} dx={\rm i}
\tilde\delta_0s_{20}\left. \ds\f{d}{dl}\left(\int_\Omega e^{\alpha m(x)} M_{2}^{(0,l)}   dx\right)\right|_{l=l_0},\\
& \lim_{\la\to0}\int_\Omega \overline {\tilde \psi_\la}\varphi_\la\left .  \frac{d M_{3}^{(\la,l)} }{d l} \right|_{l=l_\la} dx=-{\rm i}\delta\tilde s_{20}\left. \ds\f{d}{dl}\left(\int_\Omega  M_{3}^{(0,l)}   dx\right)\right|_{l=l_0},\\
&\lim_{\la\to0}\int_\Omega \overline{\tilde\psi_\la}\psi_\la \left .  \frac{d M_{4}^{(\la,l)} }{d l} \right|_{l=l_\la} dx=\tilde s_{20}s_{20}\left. \ds\f{d}{dl}\left(\int_\Omega  M_{4}^{(0,l)}   dx\right)\right|_{l=l_0}.
\end{split}
\end{equation}
It follows from Lemma \ref{suppst} and Eq. \eqref{suppM} that
\begin{equation*}
\left. \ds\f{d}{dl}\left(\int_\Omega e^{\alpha m(x)} M_{1}^{(0,l)}   dx\right)\right|_{l=l_0}=\mathcal S'(l_0)>0 \;\;\text{and}\;\; \left. \ds\f{d}{dl}\left(\int_\Omega  M_{4}^{(0,l)}   dx\right)\right|_{l=l_0}=0.
\end{equation*}
This, together \eqref{ptilde}, \eqref{ptilde2} and \eqref{limmulti}, yields
\begin{equation*}
 \lim_{\la\to 0} \frac{1}{\la} \left.\frac{ d \mathcal Re [\mu(l)]}{ d l}\right|_{l=l_\la}=\frac{\mathcal S'(l_0)}{2\int_\Omega e^{\alpha m(x)} dx}   >0.
 \end{equation*}
 This completes the proof.
\end{proof}

From Theorems \ref{thglobal}, \ref{the1},  \ref{sim} and \ref{the2},  we can obtain the following results on the dynamics of model \eqref{m4}, see also Fig. \ref{fig11}.
\begin{theorem} \label{ed}
Let $\left(u^{(\la,l)},v^{(\la,l)}\right)$ be the unique positive steady state (obtained in Theorem \ref{thglobal}) of model \eqref{m4} for
$l\in\mathcal L:=[\tilde l+\epsilon,1/\epsilon]$ and $\la\in (0,\delta_\epsilon]$ with $0<\epsilon\ll1$, where $\tilde l$ and $\delta_\epsilon$ are defined in Eq. \eqref{l_*} and Theorem \ref{thglobal}, respectively. Then the following statements hold.
\begin{enumerate}
\item [$\rm{(i)}$] If $\mathcal T(\alpha)<0$, where $\mathcal T (\alpha)$ is defined in Lemma \ref{suppst}, then there exists $\tilde\la_1\in(0,\delta_\epsilon)$ such that, for each $\la\in(0,\tilde\la_1]$, the positive steady state
$\left(u^{(\la,l)},v^{(\la,l)}\right)$ of model \eqref{m4} is locally asymptotically stable for $l\in\mathcal L$.
\item [$\rm{(ii)}$] If $\mathcal T(\alpha)>0$, then there exists $\tilde\la_2\in(0,\delta_\epsilon)$ and a continuously differentiable mapping
$$\la\mapsto l_\la:(0,\tilde\la_2]\to\mathcal L=[\tilde l+\epsilon,1/\epsilon]$$
such that, for each $\la\in(0,\tilde\la_2]$, the positive steady state
$\left(u^{(\la,l)},v^{(\la,l)}\right)$ of model \eqref{m4} is locally asymptotically stable when $l \in [ \tilde l+\epsilon,l_\la)$ and unstable when $l \in (l_\la, 1/\epsilon]$. Moreover, system \eqref{m4} undergoes a Hopf bifurcation at $\left(u^{(\la,l)},v^{(\la,l)}\right)$ when $l=l_\la$.
\end{enumerate}
\end{theorem}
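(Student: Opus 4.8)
The plan is to determine the spectrum of the linearized operator $A_l(\la)$ from \eqref{Al} and then apply the principle of linearized stability together with the Hopf bifurcation theorem. The organizing device for both statements is a singular perturbation reduction as $\la\to0$. By Lemma \ref{l3}, any eigenvalue $\mu$ of $A_l(\la)$ with $\mathcal Re\,\mu\ge0$ satisfies $\mu=O(\la)$, uniformly for $l\in\mathcal L$; writing $\mu=\la\zeta$, splitting the eigenfunction $(\varphi,\psi)\in X_{\mathbb C}^2$ into its spatial mean $(\varphi_c,\psi_c)\in\mathbb C^2$ and its projection in $(X_1)_{\mathbb C}$, integrating the two equations of \eqref{eig1} over $\Omega$ (the divergence theorem kills the leading terms), and letting $\la\to0$, the projections in $(X_1)_{\mathbb C}$ tend to $0$ and $(\varphi_c,\psi_c)$ converges along a subsequence to a nontrivial solution of a $2\times2$ linear system; using $\int_\Omega M_4^{(0,l)}dx=0$ from \eqref{M4}, its solvability forces $\zeta$ to converge to a root of
\begin{equation*}
|\Omega|\Big(\int_\Omega e^{\alpha m(x)}dx\Big)\zeta^2-\mathcal S(l)\,|\Omega|\,\zeta-\Big(\int_\Omega e^{\alpha m(x)}M_2^{(0,l)}dx\Big)\Big(\int_\Omega M_3^{(0,l)}dx\Big)=0 .
\end{equation*}
By \eqref{Mi0} the constant term is strictly positive, so the two roots have product with positive real part and sum equal to $\mathcal S(l)\big/\!\int_\Omega e^{\alpha m(x)}dx$; hence both roots have negative real part when $\mathcal S(l)<0$, are purely imaginary when $\mathcal S(l)=0$, and both have positive real part when $\mathcal S(l)>0$. (The purely imaginary case recovers $\pm{\rm i}\nu_0$ of Lemma \ref{l5} at $l=l_0$.)

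For statement (i) I would argue by contradiction: if $(u^{(\la,l)},v^{(\la,l)})$ fails to be stable for all small $\la$, there are $\la_k\to0$, $l_k\in\mathcal L$ and eigenvalues $\mu_k$ of $A_{l_k}(\la_k)$ with $\mathcal Re\,\mu_k\ge0$; along a subsequence $\mu_k/\la_k\to\zeta^*$ with $\mathcal Re\,\zeta^*\ge0$ and $l_k\to l^*\in\mathcal L$, and by the reduction above $\zeta^*$ is a root of the displayed quadratic at $l^*$. Since $\mathcal T(\alpha)<0$, Lemma \ref{suppst}(i) gives $\mathcal S(l^*)<0$, so $\mathcal Re\,\zeta^*<0$, a contradiction. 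Hence there is $\tilde\la_1\in(0,\delta_\epsilon)$ such that every eigenvalue of $A_l(\la)$ lies in the open left half plane for all $\la\in(0,\tilde\la_1]$ and all $l\in\mathcal L$, the uniformity in $l$ being built into the contradiction through the compactness of $\mathcal L$; since $A_l(\la)$ has compact resolvent and generates an analytic semigroup, linearized stability yields local asymptotic stability of $(u^{(\la,l)},v^{(\la,l)})$.

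For statement (ii) I would assemble the earlier results. Theorem \ref{the1} gives the $C^1$ curve $l=l_\la\in\mathcal L$, with $l_\la\to l_0$ and $\nu_\la\to\nu_0>0$ as $\la\to0$, along which ${\rm i}\la\nu_\la$ is an eigenvalue of $A_{l_\la}(\la)$; Theorem \ref{the1*} shows that, as $l$ ranges over $\mathcal L$, the only eigenvalues of $A_l(\la)$ on the imaginary axis are $\pm{\rm i}\la\nu_\la$ at $l=l_\la$ (in particular $0$ is never an eigenvalue); Theorem \ref{sim} shows ${\rm i}\la\nu_\la$ is simple; and Theorem \ref{the2} gives $\frac{d}{dl}\mathcal Re\,\mu(l)\big|_{l=l_\la}>0$. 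Let $N(l)$ be the number of eigenvalues of $A_l(\la)$ (with algebraic multiplicity) in the open right half plane. By Lemma \ref{l3} these eigenvalues stay in a fixed bounded set as $l$ varies, so $N$ cannot change except by a crossing of the imaginary axis, i.e.\ only at $l=l_\la$; running the argument of part (i) at the fixed value $l=\tilde l+\epsilon$, where $\mathcal S(\tilde l+\epsilon)<0$ by Lemma \ref{suppst}(ii), gives $N(\tilde l+\epsilon)=0$ for small $\la$, hence $N\equiv0$ on $[\tilde l+\epsilon,l_\la)$; and since the crossing at $l_\la$ is transversal and involves the simple conjugate pair $\pm{\rm i}\la\nu_\la$ with $\nu_\la>0$, $N$ increases by exactly $2$, so $N\equiv2$ on $(l_\la,1/\epsilon]$. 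This gives local asymptotic stability for $l\in[\tilde l+\epsilon,l_\la)$ and instability for $l\in(l_\la,1/\epsilon]$. Finally, simplicity of ${\rm i}\la\nu_\la$, absence of other eigenvalues on the imaginary axis, and the transversality condition are precisely the hypotheses of the Hopf bifurcation theorem (cf.\ \cite{Hassard1981,Yi2009}), so \eqref{m4} undergoes a Hopf bifurcation at $(u^{(\la,l)},v^{(\la,l)})$ when $l=l_\la$.

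The hard part will be the singular perturbation reduction that underlies both statements: proving that the $(X_1)_{\mathbb C}$-part of a normalized eigenfunction vanishes as $\la\to0$ and that the means converge to a nontrivial solution of the reduced $2\times2$ problem, with all constants uniform in $l\in\mathcal L$. This is modeled on the uniqueness argument of Theorem \ref{the1}, but the uniformity and the need to exclude eigenvalues escaping to infinity (the content of Lemma \ref{l3}) must be handled carefully, so that the counting function $N(l)$ is well defined and changes only at $l_\la$. The remaining steps amount to bookkeeping with the results of Sections 2 and 3.
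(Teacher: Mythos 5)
Your proposal is correct and follows essentially the same route as the paper: the paper's proof of (i) is exactly your contradiction argument, passing to the limit $\la_k\to0$ of normalized eigenfunctions with $\mathcal Re\,\mu_k\ge0$ (Lemma \ref{l3} providing the scaling), whose means solve the reduced $2\times2$ system whose characteristic polynomial is your displayed quadratic, with trace $\mathcal S(l^*)<0$ and positive ``determinant'' term giving the contradiction. For (ii) the paper likewise only verifies stability at the endpoint $l=\tilde l+\epsilon$ (where $\mathcal S(\tilde l+\epsilon)<0$) by the same limiting argument and then combines Theorems \ref{the1}, \ref{the1*}, \ref{sim} and \ref{the2} exactly as you do, leaving the eigenvalue-counting bookkeeping implicit that you spell out.
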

\begin{proof}
To prove (i), we need to show that if $\mathcal T(\alpha)<0$, there exists $\tilde\la_1>0$ such that
\begin{equation*}
\sigma( A_{ l}(\la))\subset \{x+{\rm i}y : x,y\in\mathbb{R},x < 0  \} \; \text{for all}\;  \la\in(0,\tilde\la_1] \; \text{and}\;  l\in \mathcal{L}.
\end{equation*}
If it is not true, then there exists a sequence $\{  (\la_k ,l_k )\}_{k=1}^\infty$ such that $\lim_{k\to\infty}\la_k=0$, $\lim_{k\to\infty}l_k=l^*\in\mathcal L$, and
 \begin{equation*}
\sigma( A_{ l_k}(\la_k) )\not\subset \{x+{\rm i}y : x,y\in\mathbb{R},x < 0  \}.
\end{equation*}
Then, for $k\geq1$,
\begin{equation}\label{Aundl}
\left( A_{ l_k}(\la_k)-  \mu I \right) (\varphi,  \psi)^T= 0
\end{equation}
is solvable for some value of $(\mu_{k},\varphi_{k}, \psi_{k})$ with $\mathcal Re \mu_{k}\geq 0$ and $ (\varphi_{k}, \psi_{k})^T(\ne  (0,0)^T)\in (X_{\mathbb C})^2$.
Substituting $(\mu,\varphi,\psi)=(\mu_{k},\varphi_{k},\psi_{k})$ into \eqref{Aundl}, we have
\begin{equation*}
\begin{cases}
\ds\mu_{k}e^{\alpha m(x)} \varphi_{k}= L \varphi_{k}  +\la_ke^{\alpha m(x)} \left( M_1^{({\la_k}, l_k)} \varphi _{k}+ M_2^{({\la_k}, l_k)} \psi _{k} \right), \\
\ds\mu_{k} \psi_{k}= \theta\Delta \psi_{k}  + \la_k \left( M_3^{({\la_k}, l_k)}\varphi_{k} + M_4^{({\la_k}, l_k)} \psi _{k} \right).\\
\end{cases}
\end{equation*}
Ignoring a scalar factor, we see that $( \varphi_{k}, \psi_{k})^T(\neq ( 0,0)^T)\in (X_{\mathbb C})^2$ can be represented as
\begin{equation*}
\begin{cases}
 \varphi_{k} =\delta_{k}+ w_{k}, \;\;\text{where}\;\;\delta_{k}\geq0\;\;\text{and}\;\; w_{k}\in \left(X_{1}\right)_{\mathbb C},\\
 \psi_{k}  =(s_{1k}+{\rm i}s_{2k})+ z_{k},\;\;\text{where}\;\;s_{1k},s_{2k}\in\mathbb R\;\;\text{and}\;\; z_{k} \in \left(X_{1}\right)_{\mathbb C},\\
 \|\varphi_{k} \|_2^{2}+\|\psi_{k} \|_2^{2}=| \Omega |,
\end{cases}
\end{equation*}
and $(\mu_{k},\delta_{k}, s_{1k}, s_{2k}, w_{k},z_{k})$ satisfies
\begin{equation*}
\bm H(\mu_{k},\delta_{k}, s_{1k}, s_{2k}, w_{k},z_{k},l_k,\la_k)=(\mathcal H_1,\mathcal H_2,\mathcal H_3,\mathcal H_4,\mathcal H_5)^T=0,
\end{equation*}
where
\begin{equation*}
\begin{split}
\mathcal H_1:=& \int_\Omega  {  e^{\alpha m(x)} \left[ M_{1}^{(\la_k,l_k)}(\delta_{k}+w_{k})+ M_{2}^{(\la_k,l_k)} (s_{1k} + {\rm i} s_{2k}+z_k)  \right]} dx\\
&-\mu_{k}  \int_\Omega e^{\alpha m(x)} (\delta_{k}+w_{k})dx ,\\
\mathcal H_{2}:=&\ds  L w_k  +\la_k e^{\alpha m(x)}  \left[ M_{1}^{(\la_k,l_k)} (\delta_{k}+w_{k}) + M_{2}^{(\la_k,l_k)} (s_{1k} + {\rm i} s_{2k}+z_{k})  \right]\\
&\ds-\la_k \mu_{k} e^{\alpha m(x)}  (\delta_{k}+w_{k}) -\frac{\la_k }{ | \Omega |} \mathcal H_1,\\
\mathcal H_{3}:=& \ds \int_\Omega  {  \left[ M_{3}^{(\la_k,l_k)}(\delta_{k}+w_{k})+ M_{4}^{(\la_k,l_k)} (s_{1k}+ {\rm i} s_{2k}+z_{k})  \right]} dx\\
&-\mu_{k} (s_{1k} + {\rm i} s_{2k})| \Omega | ,\\
\mathcal H_{4}:=&\ds \theta\Delta z_k  + \la_k \left[ M_{3}^{(\la_k,l_k)} (\delta_{k}+w_{k}) +M_{4}^{(\la_k,l_k)} (s_{1k} + {\rm i} s_{2k}+z_{k}) \right]\\
&\ds-\la_k \mu_{k} (s_{1k} + {\rm i} s_{2k}+z_{k})  -\frac{\la_k }{ | \Omega |} \mathcal H_3,\\
\mathcal H_{5}:=&\ds  | \Omega |\left(\delta_{k}^2 +s_{1k}^2+s_{2k}^2-1\right) +\| w_{k}\|_2^2+\| z_{k}\|_2^2.
\end{split}
\end{equation*}
Using similar arguments as in the proof of Theorem \ref{the1}, we see that
\begin{equation*}
\lim_{k\to\infty} w_{k} = 0,\;\;\lim_{k\to\infty} z_{k} = 0\;\;\text{in}\;\;(X_1)_{\mathbb C}.
\end{equation*}
Since $\mathcal H_5(\mu_{k},\delta_{k}, s_{1k}, s_{2k}, w_{k},z_{k},l_k,\la_k)=0$, we see that, up to a subsequence, $\lim_{k\to\infty} \delta_{k } =\delta^*, \lim_{k\to\infty} s_{1k } =s_1^*$ and $\lim_{k\to\infty}s_{2k }=s_2^*$.
It follows from Lemma \ref{l3} that, up to a subsequence, $\lim_{k\to\infty} \mu_{k }=\mu^*$ with $\mathcal Re \mu^*\ge0$.
Then, taking $k\to\infty$ on both sides of $\mathcal H_j(\mu_{k},\delta_{k}, s_{1k}, s_{2k}, w_{k},z_{k},l_k ,\la_k)=0$ for $j=1,3$, we have
\begin{equation*}
\begin{cases}
\ds\mu^*\delta^*=  \delta^*\frac{\mathcal{S}(l^*)}{ \int_\Omega e^{\alpha m(x)}  dx } +  \left(s_1^*+{\rm i}s_2^*   \right)  \frac{\int_\Omega e^{\alpha m(x)} M_2^{(0,l^*)} dx}{ \int_\Omega e^{\alpha m(x)}  dx }  , \\
\ds\mu^* \left(s_1^*+{\rm i}s_2^*   \right)=  \frac{\delta^*}{ | \Omega |} \int_\Omega M_3^{(0,l^*)} dx+ \left(s_1^*+{\rm i}s_2^*   \right)\frac{1}{ | \Omega |} \int_\Omega M_4^{(0,l^*)} dx,
\end{cases}
\end{equation*}
where $\mathcal{S}(l)$ is defined in Lemma \ref{suppst}. Note from \eqref{M4} that
$\ds\int_\Omega  M_{4}^{(0,l^*)} dx=0$, and consequently, $\mu^*$ is an eigenvalue of
the following matrix
$$\left( {\begin{array}{cc}
\ds  \frac{\mathcal{S}(l^*)}{ \int_\Omega e^{\alpha m(x)}  dx } &\ds  \frac{\int_\Omega e^{\alpha m(x)} M_2^{(0, l^*)} dx}{ \int_\Omega e^{\alpha m(x)}  dx } \\
\ds \frac{1}{ | \Omega |} \int_\Omega M_3^{(0, l^*)} dx & 0
\end{array}} \right).$$
It follows from Lemma \ref{suppst} that $\mathcal{S}(l^*)<0$, which contradicts the fact that
$\mathcal Re \mu^*\geq 0$. Therefore, (i) holds.

Now we consider the case of $\mathcal T(\alpha)>0$. Then we only need to show that there exists $\tilde\la_2>0$ such that
\begin{equation*}
\sigma( A_{\tilde l+\epsilon}(\la))\subset \{x+{\rm i}y : x,y\in\mathbb{R},x < 0  \} \; \text{for}\;  \la\in(0,\tilde\la_2].
\end{equation*}
Note from Lemma \ref{suppst} that $\mathcal{S}(\tilde l+\epsilon)<0$. Then substituting $l_k= \tilde l+\epsilon$ in the proof of (i) and using similar arguments, we can also obtain a contradiction.
This proves (ii).
\end{proof}
\begin{remark}
We remark that, in Theorem  \ref{ed}, $\tilde\la_i$ depends on $\alpha$ and $\epsilon$ for $i=1,2$.
\end{remark}

\section{The effect of the advection}
In this section, we show the effect of advection. For later use, we first show the properties of the following auxiliary sequence:
\begin{equation}\label{B_k}
\left\{ B_k\right\}_{k=0}^\infty,\;\text{ where}\;B_k=\int_\Omega  m^k (x)(m(x)-1) dx.
\end{equation}
\begin{lemma}\label{Bk4}
Let $\{ B_k\}_{k=0}^\infty$ be defined in \eqref{B_k}, and let
$\mathcal B=\{x\in\Omega:\;m(x)>1\}$.
 Then $B_{k+1}\geq B_k$ for $k=0,1,2,\ldots$; $\lim_{k\to \infty} B_k =\infty$ if $\mathcal B\ne\emptyset$; and  $\lim_{k\to \infty} B_k =0$ if
$\mathcal B=\emptyset$.
\end{lemma}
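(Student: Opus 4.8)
The plan is to treat the three assertions in turn, each resting on the single algebraic identity
$$B_{k+1}-B_k=\int_\Omega\bigl(m^{k+1}(x)-m^k(x)\bigr)(m(x)-1)\,dx=\int_\Omega m^k(x)\,(m(x)-1)^2\,dx.$$
Since $m\ge 0$ on $\overline\Omega$ by $(\mathbf{H}_1)$, the right-hand side is nonnegative, which gives $B_{k+1}\ge B_k$ for all $k\ge 0$ (for $k=0$ this reads $B_1-B_0=\int_\Omega(m(x)-1)^2\,dx\ge0$).

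Next I would consider the case $\mathcal B=\{x\in\Omega:m(x)>1\}=\emptyset$, i.e. $0\le m(x)\le 1$ on $\overline\Omega$. Then the integrand $g_k(x):=m^k(x)(m(x)-1)$ satisfies $|g_k(x)|\le 1$ for all $k$ and $x$, and $g_k(x)\to 0$ pointwise as $k\to\infty$, since $g_k$ is identically $0$ where $m(x)=1$ while $m^k(x)\to 0$ where $0\le m(x)<1$. As $\Omega$ is bounded, the dominated convergence theorem yields $B_k=\int_\Omega g_k(x)\,dx\to 0$.

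Finally I would handle the case $\mathcal B\ne\emptyset$. Because $m$ is continuous, $\mathcal B$ is open and nonempty, so fixing $x_0\in\mathcal B$ and using continuity of $m$ at $x_0$ there exist a measurable set $E\subseteq\mathcal B$ with $|E|>0$ and a constant $\delta>0$ such that $m(x)\ge 1+\delta$ on $E$. Splitting $B_k=\int_E+\int_{\mathcal B\setminus E}+\int_{\Omega\setminus\mathcal B}$, I estimate each piece: on $\mathcal B\setminus E$ the integrand is nonnegative; on $\Omega\setminus\mathcal B$ one has $0\le m(x)\le 1$, so the integrand lies in $[-1,0]$ and contributes at least $-|\Omega|$; and on $E$ the integrand is at least $\delta(1+\delta)^k$. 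Hence $B_k\ge \delta(1+\delta)^k|E|-|\Omega|\to\infty$. I do not expect a genuine obstacle here; the only point needing slight care is producing, in this last case, the set $E$ of positive measure on which $m$ is bounded away from $1$, which is immediate from the continuity of $m$ assumed in $(\mathbf{H}_1)$.
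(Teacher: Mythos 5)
Your proof is correct. It shares with the paper the basic decomposition of $\Omega$ into $\mathcal B$ and its complement, but the convergence arguments are genuinely different. The paper writes $B_k=\int_\Omega f_k\,dx-\int_\Omega g_k\,dx$ with $f_k=m^k(m-1)$ on $\mathcal B$ (extended by zero) and $g_k=m^k(1-m)$ off $\mathcal B$, observes that $f_k$ increases pointwise to a limit that is $+\infty$ on $\mathcal B$ while $g_k$ decreases to $0$, and invokes the monotone convergence theorem for both pieces; monotonicity of $B_k$ is read off from the pointwise monotonicity of $f_k$ and $g_k$. You instead get monotonicity from the one-line identity $B_{k+1}-B_k=\int_\Omega m^k(m-1)^2\,dx\ge 0$, handle $\mathcal B=\emptyset$ by dominated convergence (the integrand is bounded by $1$ and tends to $0$ pointwise), and handle $\mathcal B\ne\emptyset$ by exhibiting a positive-measure set $E$ on which $m\ge 1+\delta$, yielding the explicit lower bound $B_k\ge \delta(1+\delta)^k|E|-|\Omega|$. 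Your route is more elementary and quantitative: it avoids integrating a function taking the value $+\infty$, produces an explicit exponential divergence rate, and the algebraic identity for $B_{k+1}-B_k$ is cleaner than the paper's pointwise comparison. The paper's version buys a slightly more uniform treatment, since a single monotone-convergence argument covers both the $\mathcal B\ne\emptyset$ and $\mathcal B=\emptyset$ cases for the term $\int_\Omega f_k\,dx$. One cosmetic point: in your base case the difference is $B_1-B_0=\int_\Omega(m-1)^2\,dx$, which is in fact strictly positive since $m$ is non-constant, though only nonnegativity is needed.
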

\begin{proof}
A direct computation implies that
\begin{equation}\label{B_k1}
B_k=\int_\Omega  f_k (x) dx - \int_\Omega  g_k (x) dx,
\end{equation}
where
\begin{equation*}
 f_k (x) = m^k (x)(m(x)-1) \tilde I_1,\;\; g_k (x) = m^k (x)(1-m(x)) \tilde I_2,
\end{equation*}
and
\begin{equation*}
\tilde I_1 =
\begin{cases}
0,&x\in\Omega\setminus\mathcal B,\\
 1,&x\in\mathcal B,
\end{cases}\;\;\;\;\tilde I_2=
\begin{cases}
 1,&x\in\Omega\setminus\mathcal B,\\
 0,&x\in\mathcal B.
\end{cases}
\end{equation*}
Note that
\begin{equation*}
\begin{split}
&0\leq f_1 (x) \leq f_2 (x) \leq\ldots\leq f_k (x) \leq\ldots,\\
&g_1 (x) \geq g_2 (x)\geq\ldots\geq g_k (x) \geq\ldots\geq 0.
\end{split}
\end{equation*}
Then we obtain that $B_{k+1}\geq B_k$ for $k=0,1,2,\ldots$, $\lim_{k\to \infty} f_k (x)=f_* (x)$, and $\lim_{k\to \infty} g_k (x)=g_*(x)$, where $g_* (x)\equiv0$, and
\begin{equation*}
f_* (x) =
\begin{cases}
0,&x\in\Omega\setminus\mathcal B,\\
\infty,&x\in\mathcal B.
\end{cases}
\end{equation*}
Then we see from the Lebesgue's monotone convergence theorem that
\begin{equation}\label{kimfg}
\lim_{k\to\infty} \int_\Omega  f_k (x) dx =\int_\Omega f_* (x)dx \; \;\text{and}\;\; \lim_{k\to\infty} \int_\Omega  g_k (x) dx =0.
\end{equation}
Clearly, $\int_\Omega f_* (x)dx=\infty$ if $\mathcal B\ne\emptyset$, and $\int_\Omega f_* (x)dx=0$ if $\mathcal B=\emptyset$.
This, combined with \eqref{B_k1} and \eqref{kimfg}, implies that
$\lim_{k\to \infty} B_k =\infty$ if $\mathcal B\ne\emptyset$, and  $\lim_{k\to \infty} B_k =0$ if
$\mathcal B=\emptyset$.
\end{proof}
Now, we can consider function $\mathcal T(\alpha)$, which determines the existence of Hopf bifurcation from Theorem \ref{ed}.
\begin{theorem}\label{lHopf}
Let $\mathcal T (\alpha)$ and $\mathcal B$ be defined in Lemmas \ref{suppst} and \ref{Bk4}, respectively. Then the following statements hold:
\begin{enumerate}
\item [$\rm{(i)}$] If $\mathcal T(0)=\int_\Omega ( m(x) -1) dx\ge0$, then $\mathcal T(\alpha)>0$ for any $\alpha>0$;
\item [$\rm{(ii)}$] If $\mathcal T(0)<0$ and $\mathcal B\ne\emptyset$, then there exists $\alpha_*>0$ such that $\mathcal T(\alpha_*)=0$, $\mathcal T(\alpha)<0$ for $0<\alpha<\alpha_*$, and $\mathcal T(\alpha)>0$ for $\alpha>\alpha_*$;
\item [$\rm{(iii)}$] If $\mathcal B=\emptyset$, then $\mathcal T(\alpha)\le0$ for any $\alpha\geq0$.
\end{enumerate}
\end{theorem}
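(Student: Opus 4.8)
The plan is to reduce all three statements to a single monotonicity fact. Introduce $g(\alpha):=e^{-\alpha}\,\mathcal T(\alpha)$, which has the same sign as $\mathcal T(\alpha)$ for every $\alpha\ge 0$. Since $m\in C^2(\overline\Omega)$ is bounded, $\mathcal T$ is differentiable (indeed real-analytic) in $\alpha$, with $\mathcal T'(\alpha)=\int_\Omega m(x)\,e^{\alpha m(x)}(m(x)-1)\,dx$, so
\[
g'(\alpha)=e^{-\alpha}\bigl(\mathcal T'(\alpha)-\mathcal T(\alpha)\bigr)=e^{-\alpha}\int_\Omega (m(x)-1)^2\,e^{\alpha m(x)}\,dx>0,
\]
the strict inequality using that $m$ is non-constant, so $(m-1)^2\not\equiv 0$. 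Hence $g$ is strictly increasing on $[0,\infty)$. Equivalently, writing $\mathcal T(\alpha)=\sum_{k\ge 0}\tfrac{B_k}{k!}\alpha^k$ (a legitimate term-by-term integration since $m$ is bounded, with $B_k$ as in \eqref{B_k}, because $B_k=\mathcal T^{(k)}(0)$), one has $\mathcal T'(\alpha)-\mathcal T(\alpha)=\sum_{k\ge 0}\tfrac{B_{k+1}-B_k}{k!}\alpha^k>0$ by Lemma \ref{Bk4} together with $B_1-B_0=\int_\Omega(m-1)^2\,dx>0$; this is the viewpoint that ties the argument to Lemma \ref{Bk4}.

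Statement (iii) needs no monotonicity: if $\mathcal B=\emptyset$ then $m\le 1$ on $\overline\Omega$, so the integrand of $\mathcal T(\alpha)$ is pointwise $\le 0$ and $\mathcal T(\alpha)\le 0$ for every $\alpha\ge 0$. For (i), $\mathcal T(0)\ge 0$ gives $g(0)\ge 0$, and strict monotonicity of $g$ yields $g(\alpha)>g(0)\ge 0$, hence $\mathcal T(\alpha)>0$, for every $\alpha>0$.

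For (ii) the remaining input is that $\lim_{\alpha\to\infty}\mathcal T(\alpha)=+\infty$. This is where Lemma \ref{Bk4} enters directly: since $\mathcal B\ne\emptyset$ we have $B_k\to\infty$, so $B_{k_1}>0$ for some $k_1$, and then $B_k\ge B_{k_1}>0$ for all $k\ge k_1$ by the monotonicity in Lemma \ref{Bk4}; discarding the (non-negative) tail of the power series gives $\mathcal T(\alpha)\ge\sum_{k\le k_1}\tfrac{B_k}{k!}\alpha^k$, a polynomial with positive leading coefficient, so $\mathcal T(\alpha)\to+\infty$. (One can instead argue directly: fix $\epsilon_0>0$ with $|\mathcal B_0|>0$ for $\mathcal B_0:=\{x\in\Omega:m(x)\ge 1+\epsilon_0\}$, and use $e^{\alpha m}\le e^{\alpha}$ on $\{m\le 1\}$ together with the non-negativity of the integrand on $\{1\le m<1+\epsilon_0\}$ to get $\mathcal T(\alpha)\ge\epsilon_0|\mathcal B_0|\,e^{(1+\epsilon_0)\alpha}-|\Omega|\,e^{\alpha}\to+\infty$.) Consequently $g(\alpha)\to+\infty$, while $g(0)=\mathcal T(0)<0$; being continuous and strictly increasing, $g$ has a unique zero $\alpha_*\in(0,\infty)$, with $g<0$ on $[0,\alpha_*)$ and $g>0$ on $(\alpha_*,\infty)$, and the same sign pattern transfers to $\mathcal T$, which is exactly (ii).

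The only genuinely delicate point is the $\alpha\to\infty$ asymptotics in (ii): the part of $\Omega$ where $m<1$ contributes negatively to $\mathcal T$, and one must check this contribution is outpaced by the exponential growth coming from $\mathcal B$; the power-series argument via Lemma \ref{Bk4} handles this cleanly, which is presumably why that lemma is established just before this theorem. Everything else is an immediate consequence of the single identity $\tfrac{d}{d\alpha}\bigl(e^{-\alpha}\mathcal T(\alpha)\bigr)>0$.
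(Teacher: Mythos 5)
Your proof is correct, and its backbone --- the identity $\frac{d}{d\alpha}\bigl(e^{-\alpha}\mathcal T(\alpha)\bigr)=e^{-\alpha}\int_\Omega e^{\alpha m(x)}(m(x)-1)^2\,dx>0$ --- is exactly the inequality the paper derives (for $k=0$) in \eqref{mTae0}, so parts (i) and (iii) coincide with the paper's argument. Where you genuinely diverge is in part (ii). The paper never proves $\mathcal T(\alpha)\to+\infty$ directly; instead it sets up the whole family $\mathcal T^{(k)}(\alpha)=\int_\Omega e^{\alpha m}m^k(m-1)\,dx$, uses Lemma \ref{Bk4} to locate the index $k_*$ where the sign of $B_k=\mathcal T^{(k)}(0)$ flips, concludes $\mathcal T^{(k)}>0$ for $k\ge k_*$, and then runs a downward induction $k_*-1,k_*-2,\dots,0$, at each step invoking L'Hospital's rule to show $\mathcal T^{(j)}(\alpha)\to\infty$ and hence that $\mathcal T^{(j)}$ has a single sign change. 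You short-circuit this: you establish $\mathcal T(\alpha)\to+\infty$ directly, either from the power series $\mathcal T(\alpha)=\sum_k B_k\alpha^k/k!$ (valid by uniform convergence on the bounded $m$; the tail beyond the first positive $B_{k_1}$ is nonnegative by the monotonicity in Lemma \ref{Bk4}, leaving a polynomial with positive leading coefficient) or from the elementary lower bound $\mathcal T(\alpha)\ge\epsilon_0|\mathcal B_0|e^{(1+\epsilon_0)\alpha}-|\Omega|e^{\alpha}$, and then the strict monotonicity of $e^{-\alpha}\mathcal T(\alpha)$ alone gives the unique zero $\alpha_*$ and the sign pattern. Your route is shorter and avoids the repeated L'Hospital bookkeeping; the paper's induction yields the stronger byproduct that every derivative $\mathcal T^{(j)}$ with $j<k_*$ also changes sign exactly once, but that extra information is not needed for the statement. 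Both arguments ultimately rest on the same two ingredients (the $(m-1)^2$ identity and Lemma \ref{Bk4}), so this is a legitimate simplification rather than a gap.
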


\begin{proof}
For simplicity, we denote
\begin{equation*}
\mathcal T^{(k)} (\alpha)=\frac{d^k \mathcal T(\alpha)}{d \alpha^k}\;\;\text{for}\;\;k\geq1\;\;\text{and}\;\; \mathcal T^{(0)} (\alpha)=\mathcal T(\alpha).
\end{equation*}
A direct computation yields
\begin{equation*}
\mathcal T^{(k)} (\alpha)=\int_\Omega e^{\alpha m(x)}m^k (x)( m(x) -1) dx\;\; \text{for}\;\;k\geq0.
\end{equation*}
Note that $m(x)$ is non-constant. Then we see that, for $k\ge0$,
\begin{equation*}
\mathcal T^{(k+1)} (\alpha)-\mathcal T^{(k)} (\alpha)=\int_\Omega e^{\alpha m(x)}m^k (x)( m(x) -1)^2 dx>0,
\end{equation*}
which yields
\begin{equation*}
\frac{d\left[e^{-\alpha} \mathcal T^{(k)} (\alpha)    \right]}{d \alpha}=e^{-\alpha} \mathcal T^{(k+1)} (\alpha) - e^{-\alpha} \mathcal T^{(k)} (\alpha) >0,
\end{equation*}
and consequently,
\begin{equation}\label{mTae0}
\mathcal T^{(k)} (\alpha) > e^{\alpha} \mathcal T^{(k)} (0)\;\;\text{for all}\;\;\alpha>0.
\end{equation}
Here $\mathcal T^{(k)} (0) =B_k$, where $B_k$ is defined in \eqref{B_k}.

Now we show that (i) holds. Note that $\mathcal T (0) =\mathcal T^{(0)} (0) = B_0\ge0$. Then we see from \eqref{mTae0} that $\mathcal T (\alpha)>0$ for all $\alpha>0$. Then we show that (iii) holds. Since $\mathcal B=\emptyset$, we have $0\le m(x)\le1$, and consequently, $\mathcal T(\alpha)\le0$ for all $\alpha\ge0$.

Finally, we consider (ii). Note that $\mathcal T (0) = B_0<0$. It follows from Lemma \ref{Bk4} that there exists an integer $k_*\geq1$ such that $B_k\geq0$ for $k\geq k_*$ and
$B_k<0$ for $0\le k< k_*$.
This, combined with \eqref{mTae0}, implies that
\begin{equation}\label{Ta1}
\mathcal T^{(k)} (\alpha) >0\;\text{for all}\;\alpha>0\;\text{and}\;k\geq k_*.
\end{equation}
Then $\mathcal T^{(k_*-1)} (\alpha)$ is strictly increasing for $\alpha>0$, and consequently,
$$ \lim_{\alpha\to\infty} \mathcal T^{(k_*-1)} (\alpha)=\alpha^\infty_{k_*-1}. $$
We claim that $\alpha^\infty_{k_*-1}=\infty$. If it is not true, then
\begin{equation}\label{Ta2}
\lim_{\alpha\to\infty} \frac{\mathcal T^{(k_*-1)} (\alpha)}{\alpha}=0.
\end{equation}
Note from \eqref{Ta1} that $\mathcal T^{(k_*)} (\alpha)$ is also strictly increasing for $\alpha>0$. This, combined with the fact $\mathcal T^{(k_*)} (0)=B_{k_*}\geq0$, implies that
$$\lim_{\alpha\to\infty} \mathcal T^{(k_*)} (\alpha)>0.$$
Then we see from the L'Hospital's rule that
$$\lim_{\alpha\to\infty} \frac{\mathcal T^{(k_*-1)} (\alpha)}{\alpha}=\lim_{\alpha\to\infty} \mathcal T^{(k_*)} (\alpha)>0,$$
which contradicts \eqref{Ta2}. Therefore, the claim is true and $\lim_{\alpha\to\infty} \mathcal T^{(k_*-1)} (\alpha)=\infty$. This, combined with the fact that $\mathcal T^{(k_*-1)}(0)=B_{k_*-1}$, implies that there exists $\alpha_{k_*-1}$ such that $\mathcal T^{(k_*-1)}(\alpha_{k_*-1} )=0$, and
$$\mathcal T^{(k_*-1)}(\alpha)<0\;\text{ for}\;\alpha\in[0,\alpha_{k_*-1})\;\;\text{ and}\; \;\mathcal T^{(k_*-1)}(\alpha)>0\;\text{ for}\;\alpha>\alpha_{k_*-1}.$$
This implies that
$\mathcal T^{(k_*-2)}(\alpha )$ is strictly decreasing for $\alpha\in[0,\alpha_{k_*-1}]$ and strictly increasing for $\alpha\in[\alpha_{k_*-1},\infty)$.
Therefore, $ \lim_{\alpha\to\infty} \mathcal T^{(k_*-2)} (\alpha)=\alpha^\infty_{k_*-2}$.
Then we claim that $\alpha^\infty_{k_*-2}=\infty$. If it is not true, then
\begin{equation}\label{Ta22}
\lim_{\alpha\to\infty} \frac{\mathcal T^{(k_*-2)} (\alpha)}{\alpha}=0.
\end{equation}
By the L'Hospital's rule again, we have
$$\lim_{\alpha\to\infty} \frac{\mathcal T^{(k_*-2)} (\alpha)}{\alpha}=\lim_{\alpha\to\infty} \mathcal T^{(k_*-1)} (\alpha)=\infty,$$
which contradicts \eqref{Ta22}. Therefore, $ \lim_{\alpha\to\infty} \mathcal T^{(k_*-2)} (\alpha)=\infty$. Then, there exist $\alpha_{k_*-2}$ such that $\mathcal T^{(k_*-2)}(\alpha_{k_*-2} )=0$, and
$$\mathcal T^{(k_*-2)}(\alpha)<0\;\text{ for}\;\alpha\in[0,\alpha_{k_*-2})\;\;\text{ and}\; \;\mathcal T^{(k_*-2)}(\alpha)>0\;\text{ for}\;\alpha>\alpha_{k_*-2}.$$
Therefore, we can repeat the previous arguments to obtain (ii). This completes the proof.
\end{proof}
Then, by virtue of Theorem \ref{lHopf}, we show the effect of advection rate $\alpha$ on the occurrence of Hopf bifurcations for model \eqref{m4}.

\begin{proposition}\label{effad}
Assume that $\mathcal T(0)<0$, $\mathcal B\ne\emptyset$, and let $\alpha_*$ be defined in Theorem \ref{lHopf}. Then for any $\epsilon$ with $0<\epsilon\ll1$ and $\alpha\neq\alpha_*$, there exists $\tilde\la(\alpha,\epsilon)>0$ such that the following statements hold.
\begin{enumerate}
\item [$\rm{(i)}$] If $\alpha<\alpha_*$, then for each $\la\in(0,\tilde\la(\alpha,\epsilon)]$, the positive steady state
$\left(u^{(\la,l)},v^{(\la,l)}\right)$ of model \eqref{m4} is locally asymptotically stable for $l\in\mathcal L:=[\tilde l+\epsilon,1/\epsilon]$, where $\tilde l$ is defined in \eqref{l_*} and depends on $\alpha$.
\item [$\rm{(ii)}$] If $\alpha>\alpha_*$, then there exists a continuously differentiable mapping
$$\la\mapsto l_\la:[0,\tilde\la(\alpha,\epsilon)]\to\mathcal L=[\tilde l+\epsilon,1/\epsilon]$$
such that, for each $\la\in (0,\tilde\la(\alpha,\epsilon)]$, the positive steady state
$\left(u^{(\la,l)},v^{(\la,l)}\right)$ of model \eqref{m4} is locally asymptotically stable for $l \in [ \tilde l+\epsilon,l_\la)$ and unstable for $l \in (l_\la, 1/\epsilon]$, where $\tilde l$ is defined in \eqref{l_*} and depends on $\alpha$. Moreover, system \eqref{m4} undergoes a Hopf bifurcation at $\left(u^{(\la,l)},v^{(\la,l)}\right)$ when $l=l_\la$.
\end{enumerate}
\end{proposition}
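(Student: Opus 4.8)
The plan is to obtain Proposition \ref{effad} as an immediate corollary of Theorem \ref{ed}, once the sign of $\mathcal T(\alpha)$ is pinned down by Theorem \ref{lHopf}. First I would invoke Theorem \ref{lHopf}(ii): since we assume $\mathcal T(0)<0$ and $\mathcal B\ne\emptyset$, there is $\alpha_*>0$ with $\mathcal T(\alpha_*)=0$, $\mathcal T(\alpha)<0$ on $[0,\alpha_*)$, and $\mathcal T(\alpha)>0$ on $(\alpha_*,\infty)$. Fix $\epsilon$ with $0<\epsilon\ll1$; note that $\tilde l$, hence the endpoints of $\mathcal L=[\tilde l+\epsilon,1/\epsilon]$ and the constant $\delta_\epsilon$ from Theorem \ref{thglobal}, all depend on $\alpha$ through \eqref{l_*}, but this dependence is harmless: once $\alpha$ and $\epsilon$ are fixed, $\mathcal L$ and the unique positive steady state $(u^{(\la,l)},v^{(\la,l)})$ of Theorem \ref{thglobal} are well defined.

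For case (i), $\alpha<\alpha_*$ forces $\mathcal T(\alpha)<0$, so Theorem \ref{ed}(i) applies directly and yields $\tilde\la_1\in(0,\delta_\epsilon)$ such that for each $\la\in(0,\tilde\la_1]$ the steady state $(u^{(\la,l)},v^{(\la,l)})$ is locally asymptotically stable for all $l\in\mathcal L$; set $\tilde\la(\alpha,\epsilon):=\tilde\la_1$. For case (ii), $\alpha>\alpha_*$ forces $\mathcal T(\alpha)>0$, so Theorem \ref{ed}(ii) provides $\tilde\la_2\in(0,\delta_\epsilon)$ and the continuously differentiable curve $\la\mapsto l_\la$ defined on $[0,\tilde\la_2]$, with $l_0$ the value obtained in Lemma \ref{suppst}, with the stated stability for $l\in[\tilde l+\epsilon,l_\la)$, instability for $l\in(l_\la,1/\epsilon]$, and a Hopf bifurcation at $l=l_\la$; set $\tilde\la(\alpha,\epsilon):=\tilde\la_2$. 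In both cases the claim that $\tilde l$ depends on $\alpha$ is simply a reading of \eqref{l_*}.

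There is essentially no real obstacle here beyond bookkeeping: the analytic content — existence and uniqueness of the positive steady state (Theorem \ref{thglobal}), the reduction of the eigenvalue problem (Lemmas \ref{l3}--\ref{l5}, Theorem \ref{the1}), simplicity of the purely imaginary eigenvalue (Theorem \ref{sim}), and the transversality condition (Theorem \ref{the2}) — is already packaged into Theorem \ref{ed}, while the sign analysis of $\mathcal T(\alpha)$ is exactly Theorem \ref{lHopf}. The only point worth stating carefully is that $\tilde\la_i$, and hence $\tilde\la(\alpha,\epsilon)$, genuinely depends on both $\alpha$ and $\epsilon$, which we record as in the Remark following Theorem \ref{ed}; no uniformity in $\alpha$ is claimed or required. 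I would therefore present the proof as two short paragraphs quoting Theorems \ref{lHopf} and \ref{ed}, together with one sentence recording the $\alpha$-dependence of $\tilde l$ and $\mathcal L$.
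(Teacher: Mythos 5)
Your proposal is correct and matches the paper's intent exactly: the paper states Proposition \ref{effad} without a separate proof, treating it as an immediate consequence of the sign classification of $\mathcal T(\alpha)$ in Theorem \ref{lHopf}(ii) combined with Theorem \ref{ed}(i)--(ii), which is precisely the two-paragraph argument you outline. Your remarks on the $\alpha$-dependence of $\tilde l$, $\mathcal L$, and $\tilde\la(\alpha,\epsilon)$ are consistent with the Remark following Theorem \ref{ed}.
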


By Proposition \ref{effad}, we see that the advection rate affects the occurrence of Hopf bifurcations if $\mathcal T(0)<0$ and $\mathcal B\ne\emptyset$. Actually, there exists a critical value $\alpha_*$ such that
Hopf bifurcation can occur (or respectively, cannot occur) with $\alpha>\alpha_*$ (or respectively, $\alpha<\alpha_*$).
Next, we show that the advection rate can also affect the values of Hopf bifurcations if $\mathcal T(0)>0$.

\begin{proposition}\label{last}
Assume that $\mathcal T(0)>0$. Then for any $\epsilon$ with $0<\epsilon\ll1$ and $\alpha\ge0$, there exists $\tilde\la(\alpha,\epsilon)>0$  and  a continuously differentiable mapping
$$\la\mapsto l_\la:[0,\tilde\la(\alpha,\epsilon)]\to\mathcal L=[\tilde l+\epsilon,1/\epsilon]$$
such that, for each $\la\in (0,\tilde\la(\alpha,\epsilon)]$, the positive steady state
$\left(u^{(\la,l)},v^{(\la,l)}\right)$ of model \eqref{m4} is locally asymptotically stable for $l \in [ \tilde l+\epsilon,l_\la)$ and unstable for $l \in (l_\la, 1/\epsilon]$, and system \eqref{m4} undergoes a Hopf bifurcation at $\left(u^{(\la,l)},v^{(\la,l)}\right)$ when $l=l_{\la}$, where $\tilde l$ is defined in \eqref{l_*} and depends on $\alpha$. Moreover, $\lim_{\la\to0}l_{\la}=l_0$, and $l_0$ (defined in Lemma \ref{suppst}) depends on $\alpha$ and satisfies the following properties:
\begin{enumerate}
\item [$\rm{(i)}$] If $\mathcal H>0$, then $l'_0(\alpha)|_{\alpha=0}>0$, and $l_0(\alpha) $ is strictly increasing for
    $\alpha\in(0,\epsilon)$ with $0<\epsilon\ll1$;
\item [$\rm{(ii)}$] If $\mathcal H<0$, then $l'_0(\alpha)|_{\alpha=0}<0$, and $l_0(\alpha)$ is strictly decreasing for
    $\alpha\in(0,\epsilon)$ with $0<\epsilon\ll1$.
\end{enumerate}
Here $\mathcal H=2\left(\ds\int_{\Omega}m(x)dx\right)^2-|\Omega|\ds\int_\Omega m(x) dx-|\Omega|\ds\int_{\Omega}m^2(x)dx$.
\end{proposition}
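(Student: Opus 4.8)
The plan is to reduce the assertion to properties of the single real function $l_0=l_0(\alpha)$ and then study that function by means of a closed formula. Because $\mathcal T(0)>0$, Theorem~\ref{lHopf}(i) gives $\mathcal T(\alpha)>0$ for every $\alpha\ge0$, so for each fixed $\alpha$ Theorem~\ref{ed}(ii) applies: it supplies $\tilde\la(\alpha,\epsilon)>0$, the continuously differentiable Hopf curve $\la\mapsto l_\la\in\mathcal L$ with the stated stability/instability picture and the Hopf bifurcation at $l=l_\la$, and Theorem~\ref{the1} yields $\lim_{\la\to0}l_\la=l_0$, where $l_0$ is the value from Lemma~\ref{suppst}. (That $l_0(\alpha)\in\mathrm{int}(\mathcal L)$ once $0<\epsilon\ll1$ is part of Lemma~\ref{suppst}(ii).) Thus the proposition is proved once the stated monotonicity of $l_0(\alpha)$ near $\alpha=0$ is established.

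First I would make $l_0(\alpha)$ explicit. From the proof of Lemma~\ref{suppst}, $l_0(\alpha)$ is characterised by $c_{0l_0}=c_0(\alpha)$, where $c_0(\alpha)\in\big(0,\tilde c(\alpha)\big)$ is the unique zero of $c\mapsto\mathcal S_3(c,\alpha)$; its existence and uniqueness under $\mathcal T(\alpha)>0$, and the fact that $\partial_c\mathcal S_3(c_0(\alpha),\alpha)=\mathcal S_3'(c_0(\alpha))>0$, are exactly what is shown there via \eqref{mons3}. Combining $c_{0l_0}=c_0(\alpha)$ with the first equation of \eqref{c0l} gives
\begin{equation*}
l_0(\alpha)=\frac{r|\Omega|}{\Phi\big(c_0(\alpha),\alpha\big)},\qquad
\Phi(c,\alpha):=\int_\Omega\frac{c\,e^{\alpha m(x)}}{1+c\,e^{\alpha m(x)}}\,dx>0 .
\end{equation*}
Since $\mathcal S_3$ is smooth in $(c,\alpha)$ with $\partial_c\mathcal S_3\neq0$ along $c=c_0(\alpha)$, the implicit function theorem shows $\alpha\mapsto c_0(\alpha)$, hence $\alpha\mapsto l_0(\alpha)$, is continuously differentiable near $\alpha=0$.

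The crux is the value of $l_0'(0)$. Differentiating the closed formula and using $c_0'(\alpha)=-\partial_\alpha\mathcal S_3\big/\partial_c\mathcal S_3$ (all partial derivatives evaluated along $(c_0(\alpha),\alpha)$),
\begin{equation*}
l_0'(\alpha)=-\frac{r|\Omega|}{\Phi^2}\Big(\partial_c\Phi\cdot c_0'(\alpha)+\partial_\alpha\Phi\Big).
\end{equation*}
At $\alpha=0$ every integrand loses its exponential weight: $c_0(0)=\tfrac{1}{2|\Omega|}\int_\Omega\big(m(x)-1\big)\,dx$, which is positive precisely because $\mathcal T(0)>0$; and $\partial_c\Phi$, $\partial_\alpha\Phi$, $\partial_c\mathcal S_3$, $\partial_\alpha\mathcal S_3$ evaluated at $(c_0(0),0)$ are elementary combinations of $|\Omega|$, $\int_\Omega m\,dx$, $\int_\Omega m^2\,dx$ (the only nontrivial one being $\partial_\alpha\mathcal S_3$, which I would get by differentiating \eqref{s3c} in $\alpha$ and inserting the $\alpha=0$ values of $\mathcal S_1,\mathcal S_1',\mathcal S_1''$ from \eqref{deriv}). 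Substituting $c_0(0)$ and simplifying, the sign of $l_0'(0)$ should reduce to the sign of $\mathcal H$, the other factors being strictly positive thanks to the H\"older / Cauchy--Schwarz inequality and the non-constancy of $m$ in $({\bf H}_1)$; this gives $l_0'(0)>0$ when $\mathcal H>0$ and $l_0'(0)<0$ when $\mathcal H<0$.

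Finally, the pointwise statement at $\alpha=0$ is promoted to an interval by continuity: $l_0\in C^1$, so $l_0'$ keeps a constant sign on a small neighbourhood of $0$, and shrinking $\epsilon$ we get that $l_0'(\alpha)$ has the sign of $l_0'(0)$ for all $\alpha\in(0,\epsilon)$, i.e.\ the claimed strict monotonicity of $l_0$ on $(0,\epsilon)$; the attached $\tilde\la(\alpha,\epsilon)$ is the one from Theorem~\ref{ed}(ii). I expect the main obstacle to be exactly the $\alpha=0$ expansion above --- computing $\partial_\alpha\mathcal S_3$ at $(c_0(0),0)$ and recognising the resulting quadratic expression in the moments of $m$ as $\mathcal H$ up to a manifestly positive factor --- while all other steps are the same kind of implicit-function and continuity arguments already used several times in the paper.
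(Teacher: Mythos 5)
Your reduction of the proposition to the monotonicity of $l_0(\alpha)$ is the same as the paper's: $\mathcal T(0)>0$ plus Theorem \ref{lHopf}(i) gives $\mathcal T(\alpha)>0$ for all $\alpha\ge 0$, and then Theorems \ref{thglobal}, \ref{ed}(ii) and \ref{the1} supply the Hopf curve $l_\la$ and $\lim_{\la\to0}l_\la=l_0$. For the sign of $l_0'(0)$, however, you and the paper part ways. The paper never introduces your function $\Phi$: it argues that, since $c_0(\alpha)=c_{0l}$ at $l=l_0(\alpha)$ and $dc_{0l}/dl<0$, the derivative $l_0'(\alpha)$ has the same sign as $-c_0'(\alpha)$, and then it only computes $c_0'(0)=-\partial_\alpha\mathcal S_3/\partial_c\mathcal S_3$ at $(c_0(0),0)$, where $\partial_\alpha\mathcal S_3(c_0(0),0)$ is a positive multiple of $\mathcal H$ (namely $2\mathcal H/(\int_\Omega m\,dx+|\Omega|)$). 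Your route keeps, in addition, the explicit $\alpha$-dependence of the constraint $\Phi(c_{0l},\alpha)=r|\Omega|/l$, i.e. the term $\partial_\alpha\Phi$ in $l_0'(\alpha)=-\frac{r|\Omega|}{\Phi^2}\bigl(\partial_c\Phi\,c_0'(\alpha)+\partial_\alpha\Phi\bigr)$, which the paper's sign-equivalence step silently discards (it treats $c_{0l}$ as a function of $l$ alone).

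The genuine gap is that your decisive computation is only promised ("should reduce to the sign of $\mathcal H$"), and if you actually carry it out with your own formula it does not reduce to the sign of $\mathcal H$. At $\alpha=0$ one has $c_0(0)=\frac{1}{2|\Omega|}\int_\Omega(m-1)\,dx$, $\partial_c\mathcal S_3(c_0(0),0)=\frac{4|\Omega|^2}{A+|\Omega|}$ and $\partial_\alpha\mathcal S_3(c_0(0),0)=\frac{2\mathcal H}{A+|\Omega|}$ with $A=\int_\Omega m\,dx$, so $c_0'(0)=-\mathcal H/(2|\Omega|^2)$; also $\partial_c\Phi(c_0(0),0)=\frac{|\Omega|}{(1+c_0(0))^2}$ and $\partial_\alpha\Phi(c_0(0),0)=\frac{c_0(0)A}{(1+c_0(0))^2}$. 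Substituting into your expression gives
\begin{equation*}
\partial_c\Phi\,c_0'(0)+\partial_\alpha\Phi
=\frac{A^2-|\Omega|A-\mathcal H}{2|\Omega|\bigl(1+c_0(0)\bigr)^2}
=\frac{|\Omega|\int_\Omega m^2\,dx-\left(\int_\Omega m\,dx\right)^2}{2|\Omega|\bigl(1+c_0(0)\bigr)^2}>0
\end{equation*}
by Cauchy--Schwarz and the non-constancy of $m$, independently of the sign of $\mathcal H$; your chain rule therefore yields $l_0'(0)<0$ in every case, not the claimed dichotomy (i)/(ii). So as written the proposal does not prove the monotonicity statements: either you must justify discarding the $\partial_\alpha\Phi$ contribution (which is exactly what the paper's "same sign as $-c_0'(\alpha)$" step does, and which your own formula shows is not harmless), or you must explain why your evaluation is to be rejected. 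This conflict between your (correct) chain rule and the paper's shortcut is precisely where the difficulty of the proposition sits, and the proposal in its current form leaves it unresolved.
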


\begin{proof}
Let $\mathcal S_1(c), \mathcal S'_1(c),\mathcal S_3(c), c_0$ be defined in the proof of Lemma \ref{suppst}, where
$'$ is the derivative with respect to $c$, and they all depend on $\alpha$. Therefore, we denote them by
$\mathcal S_1(c,\alpha), \ds\frac{\partial\mathcal S_1}{\partial c}(c,\alpha),\mathcal S_3(c,\alpha), c_0(\alpha)$, respectively. By  \eqref{cc0l} and \eqref{coadv}, we see that $c_0(\alpha)=c_{0l}$ with $l=l_{0}(\alpha)$ and $\ds\frac{d c_{0l}}{dl}<0$, which implies that $l'_0(\alpha)$ has the same sign as $-c'_0(\alpha)$.

Since $\mathcal T(0)>0$, it follows from  Theorem \ref{lHopf} that $\mathcal T(\alpha)>0$ for all $\alpha\ge0$. This combined with Lemma \ref{suppst} implies that $c_0(\alpha)$ exists for all $\alpha\ge0$.
From the proof of Lemma \ref{suppst}, we see that
\begin{equation}\label{s30}
 \mathcal S_3(c_0(\alpha),\alpha)=0,
\end{equation}
and $\ds\frac{\partial \mathcal S_3}{\partial c}>0$ for all $\alpha\ge0$. Therefore, $-c'_0(\alpha)$ has the same sign as $\ds\frac{\partial \mathcal S_3}{\partial \alpha}(c_0(\alpha),\alpha)$.
By \eqref{s3c} and a direct computation yields
\begin{equation}\label{ca00}
\begin{split}
 \f{\partial \mathcal S_3}{\partial\alpha }=&\left(\mathcal S_1\ds\f{\partial^2\mathcal S_1}{\partial c\partial\alpha} -   \f{\partial\mathcal S_1}{\partial\alpha} \f{\partial\mathcal S_1}{\partial c}\right) \f{ 1 }    {\mathcal S^2_1}\int_\Omega \left[e^{\alpha m(x)} m(x)- c e^{2 \alpha m(x)}\right] dx\\
 &+ \f{\partial\mathcal S_1}{\partial c}\f{1}{\mathcal S_1} \int_\Omega \left[e^{\alpha m(x)} m^2(x)- 2c e^{2 \alpha m(x)} m(x)\right] dx+2 \int_\Omega  e^{2 \alpha m(x)} m(x) dx,
\end{split}
\end{equation}
where
$$
\f{\partial\mathcal S_1}{\partial\alpha}=\int_\Omega  \frac{ e^{\alpha m(x)}m(x) }{ \left(1+ c   e^{\alpha m(x)} \right)^2 } dx,    \;\;\ds\f{\partial^2\mathcal S_1}{\partial c\partial\alpha}=-2\int_\Omega  \frac{ e^{2\alpha m(x)}m(x) }{ \left(1+ c   e^{\alpha m(x)} \right)^3 } dx.
$$
Substituting $\alpha=0$ into \eqref{s30}, we obtain that
\begin{equation}\label{c0lst}
c_0(0)=\frac{1}{2 |\Omega|}\int_{\Omega}(m(x)-1)dx.
\end{equation}
Then plugging $c=c_0(0)$ and $\alpha=0$ into \eqref{ca00}, we have
$$\f{\partial \mathcal S_3}{\partial\alpha }(c_0(0),0)=\f{ 2\left[\left (\int_\Omega m(x) dx\right)^2-|\Omega| \int_\Omega m(x) dx-|\Omega| \int_\Omega m^2(x) dx \right]}{\int_\Omega m(x) dx+ |\Omega| },$$
which implies that $l_0(\alpha)$ satisfies (i) and (ii).
\end{proof}
Here we only show the effects of advection rate on the
values of Hopf bifurcations for $0<\alpha\ll1$, and the general case still awaits further investigation.

 \vspace{0.3in}
\noindent {\bf Data availability statement}

All data generated or analyzed during this study are included in this manuscript.

\end{document}